\documentclass[11pt,leqno,twoside]{amsart}
\usepackage{amssymb,amsmath,amsthm,color}
\usepackage{t1enc}
\usepackage{a4,indentfirst,latexsym}
\usepackage{graphics}
\usepackage{cite,enumitem,graphicx}
\usepackage[colorlinks=true,urlcolor=blue,
citecolor=red,linkcolor=blue,linktocpage,pdfpagelabels,
bookmarksnumbered,bookmarksopen,hypertexnames=false]{hyperref}
\usepackage[english]{babel}
\usepackage[left=2.61cm,right=2.61cm,top=2.72cm,bottom=2.72cm]{geometry}
\usepackage[hyperpageref]{backref}
\usepackage[colorinlistoftodos]{todonotes}

\makeatletter
\providecommand\@dotsep{5}
\def\listtodoname{List of Todos}
\def\listoftodos{\@starttoc{tdo}\listtodoname}
\makeatother

\numberwithin{equation}{section}
\newtheorem{theorem}{Theorem}[section]
\newtheorem{proposition}[theorem]{Proposition}

\newtheorem{lemma}[theorem]{Lemma}
\newtheorem{example}[theorem]{Example}

\newtheorem{remark}{Remark}

\def\div{\mathop{\mathrm{div}\,}}

\def\N{\mathbb{N}}
\def\R{\mathbb{R}}

\def\R {{\rm I}\hskip -0.85mm{\rm R}}
\def\N {{\rm I}\hskip -0.85mm{\rm N}}

\title[A new Minimax Theorem]
{A New Minimax Theorem for a Class of $ C^1$-Functionals}

\author[D. S. Pereira]{Denilson S. Pereira}
\author[J. R. Santos Jr.]{Jo\~ao R. Santos J\'unior}
\author[J. F. F. Silva]{Jo\~ao F. F. Silva}

\address[D. S. Pereira]
{\newline\indent Unidade Acad\^{e}mica de Matem\'atica,
\newline\indent
Universidade Federal de Campina Grande
\newline\indent
58429-900, Campina Grande, PB, Brazil}
\email{\href{mailto: denilsonsp@mat.ufcg.edu.br}{denilsonsp@mat.ufcg.edu.br}}

\address[J. R. Santos Jr.]
{\newline\indent Faculdade de Matem\'atica
\newline\indent
Instituto de Ci\^{e}ncias Exatas e Naturais
\newline\indent
Universidade Federal do Par\'a
\newline\indent
Avenida Augusto corr\^{e}a 01, 66075-110, Bel\'em, PA, Brazil}
\email{\href{mailto: joaojunior@ufpa.br }{joaojunior@ufpa.br}}

\address[J. F. F. Silva]
{\newline\indent Programa de Doutorado em Matem\'atica
\newline\indent
Instituto de Ci\^{e}ncias Exatas e Naturais
\newline\indent
Universidade Federal do Par\'a
\newline\indent
Avenida Augusto corr\^{e}a 01, 66075-110, Bel\'em, PA, Brazil}
\email{\href{mailto: joaofelipe.ce@gmail.com}{joaofelipe.ce@gmail.com}}

\thanks{Jo\~ao R. Santos was partially
supported by CNPq/Brazil 304340/2025-1, Brazil. }
\thanks{{\it Data availability statement}: Data sharing not applicable to this article as no datasets were generated or analysed during the current study.
{\it Conflict of interest statement}: The authors declare that they have no conflict of interest.}

\subjclass[2010]{ 
35J50,  	
35J57. 
	}
\keywords{Ground-state solution, Nehari method, Open cones}

\begin{document}

\begin{abstract}
We establish a new abstract minimax theorem for obtaining ground-state critical points of a class of \(C^{1}\)-functionals. The central feature of this class is that the associated Nehari set is not necessarily a differentiable manifold, but is homeomorphic to a (possibly non-complete) submanifold of the unit sphere. This occurs for functionals whose fibering maps \(t \mapsto I(tu)\) possess a unique global maximum point only when \(u\) belongs to an open cone in the underlying Banach space. A lower semicontinuous boundary barrier, which may be finite, controls minimizing sequences near the boundary of the spherical cone. As a consequence, we provide a general minimax characterization of the ground-state energy level. Elliptic applications complement existing results and include superlinear and quasilinear problems with finite barriers, including thresholds depending on the profile of a first eigenfunction and its gradient.
\end{abstract}


\bigskip

\maketitle
\begin{center}
\begin{minipage}{12cm}
\tableofcontents
\end{minipage}
\end{center}

\bigskip

\section{Introduction}

In \cite{SW}, among other things,  A. Szulkin and T. Weth study elliptic partial differential equations with continuous (and possibly nondifferentiable) nonlinearities whose primitives satisfy the so-called superquadratic condition at infinity. In order to apply the Nehari method to find ground-state and other kinds of solutions, the authors introduce an interesting approach to overcome the lack of a $C^1$-structure for the Nehari set. In fact, under suitable conditions on the functional $I:X\to\R$ and on the Banach space $X$, the existence of a homeomorphism is proved between the Nehari set $\mathcal{N}$ associated with $I$ and the unit sphere $\mathcal{S}$ of $X$. This homeomorphism allows them to define an auxiliary functional $\Psi:\mathcal{S}\to\R$ with the convenient property that the existence of critical points for $\Psi$ implies the existence of critical points of $I$. Due to conditions imposed on the Banach space $X$, the unit sphere $\mathcal{S}$ is a $C^1$ manifold, and for this reason, the task of looking for critical points of $\Psi$ is a more treatable problem.

\medskip

Still in \cite{SW}, as an application of the referred approach, a classical paper by Benci and Cerami (see \cite{BC}) is revisited, which relates the number of solutions of a certain elliptic PDE with the topology of the domain where the equation is considered. The authors are able to improve the main results in \cite{BC} by assuming weaker conditions on the nonlinearity. However, since the idea is to obtain positive solutions, the Nehari set in this case is no longer homeomorphic to the whole unit sphere $\mathcal{S}$, but to an open subset $\mathcal{S}^+$ of $\mathcal{S}$. As observed in \cite{SW}, this new situation brings together some technical difficulties, as, for instance, the fact that $\mathcal{S}^+$ has a nonempty boundary in $\mathcal{S}$ and, therefore, the behaviour of minimizing sequences needs to be carefully controlled near the boundary.

\medskip

In this article, we are interested in an abstract setting in which the situation considered in \cite{SW}, namely, the possible existence of a homeomorphism between $\mathcal{N}$ and a proper subset of $\mathcal{S}$, is a particular case. Indeed, it is clear that if not all fibering maps $t\mapsto I(tu)$ have an appropriate behavior, which, in the context of \cite{SW}, means they have a unique global maximum point with positive energy (see hypothesis ($A_2$) in \cite{SW}), we should not expect the Nehari set to be homeomorphic to the unit sphere of the space. In such cases the full-sphere formulation in \cite{SW} does not apply directly. However, \cite[Section~3.4]{SW} already treats a proper open subset of the sphere when the reduced energy diverges at its boundary. The distinction considered here is an abstract lower semicontinuous boundary barrier $\Gamma$, which may be finite, together with the strict energy separation $c_{\mathcal N}<\inf_{\partial\mathcal S_{\mathcal A}}\Gamma$. This permits the variational construction on the closure of the spherical cone in the finite-barrier case as well. 

The extension is substantive in the finite-barrier case. Subsection~\ref{asymptotic-p}, including Example~\ref{p-explicit-primitive}, gives an asymptotically $p$-linear reaction satisfying our strict energy separation with a finite $\Gamma$. Subsection~\ref{finite-cubic} gives a different mechanism, involving a sign-changing cubic reaction, for which Proposition~\ref{cubic-boundary} constructs
\[
w_n\in\mathcal S_{\mathcal A},\qquad
w_n\to w\in\partial\mathcal S_{\mathcal A},\qquad
\Psi(w_n)\to\Gamma(w)<+\infty.
\]
Subsection~\ref{gradient-barrier} further gives a quasilinear example whose finite threshold depends on the spatial profile of $e_1$ and $\nabla e_1$, rather than only on a constant saturation value and a support measure. The boundary-divergence property of \cite[Section~3.4, Lemma~26]{SW} is unavailable in both these applications. Our hypothesis $(H)$ instead separates the minimizing level from the boundary by $c_{\mathcal N}<\inf\Gamma$. The distinction is the admissibility of finite boundary energies, rather than the existence of a nonempty boundary alone, and the cubic example shows that it is not restricted to asymptotically linear reactions.

\medskip

It turns out that in numerous relevant variational problems, the associated energy functional does not exhibit homogeneous behavior in all directions. In fact, in what follows, we identify some simple prototypes of PDEs in which the existence of a unique global maximum point with positive energy for the mapping $t \mapsto I(tu)$ occurs only for a certain class of functions $u \in X$ (which we denote by $\mathcal{A}$):

\medskip 

\begin{enumerate}
\item Problems involving sign-changing weights: 

\begin{equation}
\left \{ \begin{array}{ll}
   -\Delta u=a(x)|u|^{p-2}u & \mbox{in $\Omega$,}\\
  u=0 & \mbox{on $\partial\Omega$,}
\end{array}\right.
\end{equation}
where $a\in L^{\infty}(\Omega)$, $a^\pm\neq 0$, $2<p<2^{\ast}$ and $\Omega$ is a bounded domain in $\R^N$. The associated energy functional is
$$
I(u)=\frac{1}{2}\int_{\Omega}|\nabla u|^{2}dx-\frac{1}{p}\int_{\Omega}a(x)|u|^{p}dx.
$$
If $u$ belongs to
$$
\mathcal{A}=\left\{ u\in H_{0}^{1}(\Omega): \int_{\Omega}a(x)|u|^{p} dx>0\right\},
$$
then the fibering map $[0, +\infty)\ni t\mapsto I(tu)$ has a unique critical point, which is a global maximum. However, it has no critical points if $u\not\in \mathcal{A}$.

\medskip

\item Asymptotically linear elliptic problems:

\begin{equation}
\left \{ \begin{array}{ll}
   -\Delta u=f(u) & \mbox{in $\Omega$,}\\
  u=0 & \mbox{on $\partial\Omega$,}
\end{array}\right.
\end{equation}
where $\Omega$ is bounded domain in $\R^N$ and $f \in C(\R)$ verifies:\\
\begin{itemize}
\item[$(f1)$]  
$$
0<\lim_{s \rightarrow 0}2F(s)/s^{2}=\alpha<\lambda_1<\lim_{\vert s\vert \rightarrow +\infty}2F(s)/s^{2}=\eta,
$$ 
where $\lambda_1$ denotes the first eigenvalue of the Laplacian operator with Dirichlet boundary condition and $F(t)=\int_{0}^{t}f(s)ds$;
\item[$(f2)$] The map $s \mapsto f(s)/s$ is strictly increasing on $(0,+\infty)$ and strictly decreasing on $(-\infty,0)$. 
\end{itemize}
\medskip
The associated energy functional is
$$
I(u)=\frac{1}{2}\int_{\Omega}|\nabla u|^{2}dx-\int_{\Omega}F(u)dx.
$$
Note that, for each $u\neq 0$,
\begin{equation}
\frac{I(tu)}{t^{2}}= \frac{1}{2}\|u\|^{2}-\int_{[u\neq 0]}\left[\frac{F(tu)}{(tu)^{2}}\right]u^{2}dx.
\end{equation}
Consequently, from $(f1)$,
$$
\lim_{t\to 0^+}\frac{I(tu)}{t^{2}}=\frac{1}{2}\left(\|u\|^{2}-\alpha\int_{\Omega}u^{2}dx\right)\geq \frac{1}{2}\left(1-\frac{\alpha}{\lambda_1}\right)\|u\|^{2}>0
$$
and
$$
\lim_{t\to \infty}\frac{I(tu)}{t^{2}}=\frac{1}{2}\left(\|u\|^{2}-\eta\int_{\Omega}u^{2}dx\right)<0,
$$
if $u$ belongs to the set
$$
\mathcal{A}=\left\{u\in H_{0}^{1}(\Omega): \|u\|^{2}<\eta \int_{\Omega}u^{2}dx\right\}.
$$
Thus $I(tu)=\frac{I(tu)}{t^{2}}t^{2}$ is positive for $t$ small and
$$
\lim_{t\to +\infty}I(tu)=\lim_{t\to +\infty}\frac{I(tu)}{t^{2}}t^{2}=-\infty.
$$
This fact (jointly with $(f2)$) shows us that ($A_2$) holds for all $u\in \mathcal{A}$. 

\medskip

On the other hand, if $u\in \mathcal{A}^{c}$, $u\neq 0$, then $\|u\|^{2}\geq \eta\int_{\Omega}u^{2}dx$. Thus, it follows from $(f_{1})$ that
$$
I'(u)u=\|u\|^{2}-\int_{[u\neq 0]}\frac{f(u)}{u}u^{2}dx
\geq \int_{[u\neq 0]}\left[\eta-\frac{f(u)}{u}\right]u^{2}dx>0.
$$
This shows that $u$ is not a critical point of $I$.

\medskip

\item Problems involving even terms:

\begin{equation}
\left \{ \begin{array}{ll}
   -\Delta u=\lambda u-u^{2} & \mbox{in $\Omega$,}\\
  u=0 & \mbox{on $\partial\Omega$,}
\end{array}\right.
\end{equation}
where $\lambda<\lambda_1$ is a real parameter and $\Omega$ is a bounded domain in $\R^3$. The associated energy functional is
$$
I(u)=\frac{1}{2}\int_{\Omega}|\nabla u|^{2}dx-\frac{\lambda}{2}\int_{\Omega}u^{2}dx+\frac{1}{3}\int_{\Omega}u^{3}dx.
$$
It can be observed that ($A_2$) holds only for $u$ belonging to
$$
\mathcal{A}=\left\{u\in H_{0}^{1}(\Omega): \int_{\Omega}u^{3}dx<0\right\}.
$$

\end{enumerate}

\medskip

In the previous situations, it is not initially clear how to provide a unified abstract framework that would allow us to guarantee the existence of a homeomorphism between the Nehari set and a proper subset of the unit sphere of $X$. Additionally, another delicate issue would be to find general conditions on $I$ to ensure that such a subset has a differentiable structure, which would allow us to maintain the practical spirit of the approach in \cite{SW}.  

\medskip

Motivated by applications, we develop an abstract boundary-barrier formulation extending the full-sphere setting in \cite{SW}, which is recovered by taking $\mathcal{A} = X \setminus \{0\}$, where the set $\mathcal{A}$ is an open cone in $X$. Under appropriate conditions, we prove the existence of a homeomorphism between $\mathcal{N}$ and $\mathcal{A} \cap \mathcal{S} \neq \emptyset$, which constitutes a proper, possibly non-complete subset of the sphere $\mathcal{S}$ that nevertheless retains a differentiable structure (see Proposition 2.3). As our main abstract result, we establish a minimax theorem (see Theorem \ref{level}), which guarantees the existence of ground-state solutions for this general setting. As a consequence, we provide a unified approach to obtain ground-state solutions for a broad class of elliptic problems and derive a radial characterization of the ground-state energy level in terms of the $C^{1}$-manifold $\mathcal{S}_{\mathcal{A}}$.

\medskip

We would like to emphasize that the abstract result developed here, which deals with the Nehari-type manifold in cones, also presents an additional advantage. Indeed, this approach not only allows us to obtain the existence of ground state solutions for the problem under consideration, together with a radial minimax characterization of its energy level, but it also provides certain qualitative information about the solutions in specific contexts. This feature can be particularly valuable depending on the nature of the problem being studied. In fact, since the solutions obtained lie in the open cone involved in the method, one can derive qualitative properties directly from this geometric constraint. For instance, in the first application presented in Section 3, this manifests as the additional information that the obtained solution is either nonpositive and nonzero or sign-changing. It is worth noting that in this case, since the underlying operator is of integro-differential type and the maximum principle generally fails to hold, any information regarding the sign of the solution is typically delicate to obtain; see, for instance, \cite{AB} and \cite{BPS}. Therefore, the ability of the method to yield such information is a remarkable advantage in this class of problems. In the second application, this qualitative feature translates into the fact that the solution, by belonging to the open cone, has a nontrivial positive part on a subset of $\Omega_0$ of positive measure. Hence, this methodological property can serve as a powerful and insightful tool to extract additional information beyond ``mere existence'' of solutions.

\medskip

To the best of our knowledge, there are few works in the currently available literature that focus on the existence of ground-state solutions for functionals exhibiting this type of behavior in open cones. In \cite{FQS}, for instance, the authors study the existence of ground-state solutions for functionals whose fibering maps exhibit different behaviors in each direction. In particular, they also treat the case where the fibering maps have a unique global maximum point only for functions belonging to an open cone properly contained within a reflexive Banach space. In this context, some abstract results are provided about the existence of ground-state solutions, involving problems whose energy functional is given by $\Phi=I_0-I$, where $I_0,I\in C^1$ are such that $I$ dominates $I_0$ at infinity and $I_0$ is coercive in the set where $I>0$. The authors provide some applications in order to recover established results and provide new existence results for elliptic problems involving homogeneous and non-homogeneous operators, such as Kirchhoff and p\&q-Laplacian operators, and systems.

\medskip

 Since in \cite{FQS}, the authors are not concerned with a topological characterization of the Nehari set, no homeomorphism between $\mathcal{N}$ and any subset of the reflexive Banach space is established. In fact, the authors use an approach inspired in \cite{LWW}, in order to prove that the minimum on $\mathcal{N}$ is a critical point, thus, such a topological study was not necessary for their purpose. The unique-maximum geometry in \cite{FQS} already implies the radial minimax identity by parametrization of the rays. Thus, the distinction is not the absence of such an identity in that work, but the use here of a norm-topology boundary barrier and ambient Palais--Smale compactness on the Nehari set. These compactness requirements should not be identified with the weak-compactness assumptions in \cite{FQS}. We would also like to emphasize that, unlike the approach taken in \cite{FQS}, the arguments used in this work for the proof of Theorem \ref{level} do not require the reflexivity of the Banach space involved.

 \medskip

Some particular instances of problems involving the search for ground-state solutions, specifically in cases where the Nehari set is not a $C^1$ manifold and is contained within an open cone, have been considered in the literature (see, for instance, \cite{FS} and \cite{NPS}). It is important to point out, however, that these works do not propose an abstract framework for addressing a general class of problems under such structural conditions.

\medskip

The present paper is divided as follows: In section \ref{abstract setting}, a general version of the Nehari method in cones is introduced, and some abstract results to obtain ground-state solutions with a minimax type energy level given in terms of a subset of the unit sphere are provided. In Section \ref{applica}, a number of elliptic equations are considered in order to illustrate different relevant situations in which the associated energy functional has a ``geometry'' that fits the abstract result presented in section \ref{abstract setting}. Our main theorems in this section provide ground-state results related to the settings considered in \cite{BPS}, \cite{FQS}, \cite{LZ}, \cite{NPS} and \cite{PZ}.

\section{The Nehari method in open cones}\label{abstract setting}

To describe the Nehari method in cones in an abstract setting, we generalize the ideas introduced in \cite{SW}. Let $(X, \Vert \cdot \Vert)$ be a real Banach space, $\mathcal{S}:=\{u\in X:\|u\|=1\}$ the unit sphere in $X$. For a subset $A\subset X$ and $s\in\mathbb{R}$, we define $s A:=\{su: u\in A\}$. In this paper, by a \textit{cone} in $X$ we mean a nonempty subset $\mathcal{A}$ of $X$ such that $s\mathcal{A}\subset\mathcal{A}$, for all $s>0$. More precisely, since $\mathcal{A}=s[(1/s)\mathcal{A}]\subset s\mathcal{A}$, if $\mathcal{A}$ is a cone then $s\mathcal{A}=\mathcal{A}$, for all $s>0$. Let $I \in C^{1}(X, \R)$ be a functional and assume without loss of generality that $I(0)=0$. A function $\varphi \in C(\R^+, \R^+)$ is said to be a \textit{normalization function} if $\varphi(0)=0$, $\varphi$ is strictly increasing and $\varphi(s) \rightarrow +\infty$, as $s \rightarrow +\infty$. Assume that:

\medskip

\begin{itemize}
\item [$(A_1)$] \ There exists a normalization function $\varphi$ such that      
\[
\psi (u) := \int^{\left\|u\right\|}_{0}\varphi(s) \ ds \ \in \ C^1(X\backslash\left\{0\right\}, \R),
\]
and $J:=\psi^\prime$ is bounded on bounded sets and $J(w)w = 1$, for all $w \in \mathcal{S}$.

\item [$(A_2)$] \ There exists an open cone $\mathcal{A}$ contained in $X\backslash\left\{0\right\}$ with the property that, for each $w \in \mathcal{A}$, there exists a unique $s_w \in (0, +\infty)$ such that if $\alpha_w(s)=I(sw)$, then $\alpha'_w(s) > 0$ for $s \in (0, s_w)$ and $\alpha'_w(s) < 0$, for all $s>s_w$. Furthermore, if $w \in X\backslash\left\{0\right\}$ and $\alpha'_w(s) = 0$ for some $s > 0$, then $w \in \mathcal{A}$ and $s=s_w$.    
\item [$(A_3)$] \ There exists $\delta > 0$ such that 
$\delta \leq s_w$ for all $w \in \mathcal{S}_\mathcal{A}:=\mathcal{S}\cap\mathcal{A}\neq \emptyset$ and for each compact subset $\mathcal{K} \subset \mathcal{S}_\mathcal{A}$ there exists a constant $C_\mathcal{K} > 0$ such that $s_w \leq C_{\mathcal{K}}$ for all $w \in \mathcal{K}$.
\end{itemize}

\medskip

\begin{remark}\label{rema1} 
It is well known that under hypothesis $(A_1)$ the sphere $\mathcal{S}$ is a $C^1$-submanifold of $X$ and the tangent space of $\mathcal{S}$ at $w\in \mathcal{S}$ is given by $
T_w(\mathcal{S})=\left\{z \in X: J(w)z = 0\right\}$. In particular, for any $w \in \mathcal{S}$, there holds the decomposition $X = T_w{(\mathcal{S})} \oplus \R w$. Furthermore, in hypotheses $(A_2)$ and $(A_3)$, if $\mathcal{A}=X\backslash\{0\}$ then $\mathcal{S}_{\mathcal{A}}=\mathcal{S}$ and we are in the same conditions required in \cite{SW}. However, when $\mathcal{A}$ is a proper subset of $X\backslash\{0\}$, then $\mathcal{S}_{\mathcal{A}}$ is a proper open subset of $\mathcal{S}$. Throughout, $\partial\mathcal{S}_{\mathcal{A}}$ denotes its boundary relative to $\mathcal{S}$, closures are taken in $X$, and $\inf\emptyset=+\infty$. The relative boundary may be empty if the sphere is disconnected.
\end{remark}

\medskip

In what follows $X$ satisfies hypothesis $(A_1)$ and the Nehari set $\mathcal{N}$
associated with $I$ is given by
$$
\mathcal{N} := \left\{u \in X\backslash\left\{0\right\}: I'(u)u = 0\right\}.$$

\begin{lemma}         \label{Lem 1.2}
Let $X$ be a real Banach space satisfying $(A_1)$ and suppose that $I$ satisfies $(A_2)-(A_3)$.   
\par\noindent $(a)$ \ If $sw \in \mathcal{N}$ with $s>0$ and $w\in X\backslash \{0\}$, then $s=s_{w}$ and $w\in\mathcal{A}$. In particular, $\mathcal{N}\subset\mathcal{A}$ and if $w\in\mathcal{N}$ then $s_w=1$;
\par\noindent $(b)$ \ $s_{(tw)} = (1/t)s_w$, for each $w \in \mathcal{A}$ and $t > 0$;
\par\noindent $(c)$ \ $\emptyset\neq\mathcal{N} = \left\{s_w w: w \in \mathcal{S}_{\mathcal{A}}\right\}$
\par\noindent $(d)$ \ $\mathcal{N}$ is a closed subset in $X$ and $\mathrm{dist}(\mathcal{N}, 0) > 0$.
\end{lemma}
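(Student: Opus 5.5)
The whole lemma rests on the identity $\alpha_w'(s)=I'(sw)w$, valid for $w\in X\setminus\{0\}$ and $s>0$, together with $(A_2)$. I would prove (a)–(c) directly from this identity and spend most of the effort on (d).

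For (a), let $sw\in\mathcal N$ with $s>0$ and $w\neq0$. Then
\[
0=I'(sw)(sw)=s\,\alpha_w'(s),
\]
so $\alpha_w'(s)=0$. The last clause of $(A_2)$ then gives $w\in\mathcal A$ and $s=s_w$. Taking $s=1$ and $w=u\in\mathcal N$ shows $u\in\mathcal A$ and $s_u=1$, hence $\mathcal N\subset\mathcal A$.

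For (b), fix $w\in\mathcal A$ and $t>0$. The cone property gives $tw\in\mathcal A$, and $\alpha_{tw}(s)=I(stw)=\alpha_w(ts)$. Therefore $\alpha_{tw}'(s)=t\,\alpha_w'(ts)$, which is positive for $s<s_w/t$ and negative for $s>s_w/t$. Uniqueness in $(A_2)$ forces $s_{tw}=s_w/t$.

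For (c), first note that $\mathcal S_{\mathcal A}\ne\emptyset$ by $(A_3)$. For $w\in\mathcal S_{\mathcal A}$, the sign change of $\alpha_w'$ together with continuity of $\alpha_w'$ (since $I\in C^1$) gives $\alpha_w'(s_w)=0$. Hence $I'(s_ww)(s_ww)=0$, so $s_ww\in\mathcal N$. Conversely, let $u\in\mathcal N$. By (a), $u\in\mathcal A$, so $w:=u/\|u\|\in\mathcal S_{\mathcal A}$. Applying (a) to $u=\|u\|w$ gives $\|u\|=s_w$. Thus $\mathcal N=\{s_ww:\,w\in\mathcal S_{\mathcal A}\}$, and this set is nonempty.

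For (d), the distance bound comes directly from (c) and $(A_3)$: every $u\in\mathcal N$ has $\|u\|=s_w\ge\delta$ with $w=u/\|u\|$, so $\mathrm{dist}(\mathcal N,0)\ge\delta>0$. For closedness, take $u_n\in\mathcal N$ with $u_n\to u$. Then $\|u\|\ge\delta$, so $u\ne0$. Continuity of $I'$ gives
\[
I'(u_n)u_n\to I'(u)u,
\]
so $I'(u)u=0$ and $u\in\mathcal N$.

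The one delicate point in (d) is that $\mathcal S_{\mathcal A}$ need not be closed: the limit direction $u/\|u\|$ could a priori lie on $\partial\mathcal S_{\mathcal A}$. This is harmless here, because $\mathcal N$ is defined by the closed condition $I'(u)u=0$ on $X\setminus\{0\}$. By (a), the limit then automatically satisfies $u\in\mathcal A$, so membership in the cone never has to be checked separately. The lower bound $\delta$ is exactly what keeps the limit away from $0$, where $I'(u)u=0$ holds trivially.
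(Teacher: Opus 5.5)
Your proposal is correct and follows the paper's proof essentially step by step. It uses the identity $s\,\alpha_w'(s)=I'(sw)(sw)$ with the last clause of $(A_2)$ for (a), the rescaling $\alpha_{tw}(s)=\alpha_w(ts)$ for (b), the radial description together with $(A_3)$ for (c) and the distance bound, and continuity of $u\mapsto I'(u)u$ away from $0$ for closedness. There are two cosmetic differences: you identify $s_{tw}$ through the sign pattern rather than through $\alpha_{tw}'(s_{tw})=0$, and in (c) you apply (a) directly to $u=\|u\|\,(u/\|u\|)$ instead of passing through (b).
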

\begin{proof} 
 
 \noindent $(a)$ Let $w \in X\backslash\{0\}$ and $s > 0$ such that $sw \in \mathcal{N}$. Then $I'(sw)sw =s\alpha'_w(s)=0$, thus $\alpha'_w(s) = 0$. Hypothesis $(A_2)$ implies $w \in \mathcal{A}$ and $s = s_w$.

   \noindent $(b)$ Let $w \in \mathcal{A}$ and $t > 0$. Using the definition of $\alpha_w$ and $(A_2)$, it follows that $0 = \alpha'_{tw}(s_{(tw)}) = t I'((s_{(tw)}t)w)w = t\alpha'_w(s_{(tw)}t)$
and thus $s_{(tw)}t$ is a critical point of $\alpha_w$. Since $s_w$ is the unique critical point of $\alpha_w$, using $(A_2)$ again we obtain $s_{(tw)}t = s_w$, which implies $s_{(tw)} = (1/t)s_w$. 
 
 \noindent $(c)$ It follows from items $(a)$ and $(b)$ that 
\[\mathcal{N} \subset \left\{s_w w: w \in \mathcal{A}\right\} = \left\{s_{(w /\| w\|)}\frac{w}{\Vert w\Vert}: w \in \mathcal{A}\right\} = \left\{s_w w: w \in \mathcal{S}_{\mathcal{A}}\right\}.\]
To prove that $\left\{s_w w: w \in \mathcal{S}_{\mathcal{A}}\right\} \subset \mathcal{N}$, it suffices to note that from $(A_2)$, we have $\alpha'_w(s_w) = 0$ and then $I'(s_w w)w=0$, and hence $I'(s_w w)(s_w w)=0$. Therefore $s_w w \in \mathcal{N}$, for all $w \in \mathcal{S}_{\mathcal{A}}$.

\noindent$(d)$ Let $u \in \mathcal{N}$, from item $(c)$, it follows that $$\mathrm{dist}(\mathcal{N}, 0) =\displaystyle \inf_{u \in \mathcal{S}_{\mathcal{A}}}s_u.
$$ 
Combining this fact with $(A_3)$ we get $\delta > 0$ such that $s_u\geq\delta$ for all $u\in S_{\mathcal{A}}$, that is, $\mathrm{dist}(\mathcal{N}, 0)\geq\delta > 0$. To prove that $\mathcal{N}$ is closed, let $\{u_n\} \subset \mathcal{N}$ be a sequence and $u \in X$ such that $u_n \rightarrow u$ in $X$. Since $\mathrm{dist}(\mathcal{N}, 0)> 0$, we have $u\in X\backslash\{0\}$. On the other hand, by $I \in C^1{(X, \R)}$, we conclude that $0 = I'(u_n)u_n \rightarrow I'(u)u$ as $n$ tends to infinity. Thus $I'(u)u = 0$ with $u\neq 0$, this implies $u \in \mathcal{N}$.
\end{proof}

\begin{remark}\label{bounded}
It is a straightforward consequence of $(A_2)$ and Lemma \ref{Lem 1.2}(c) that $I$ is bounded from below on $\mathcal{N}$.  
\end{remark}

Define the mappings $\widehat{m}: \mathcal{A} \rightarrow \mathcal{N}$ and $m: \mathcal{S}_{\mathcal{A}}\rightarrow \mathcal{N}$ by setting 
\[\widehat{m}(w)=s_w w~~\mbox{and}~ ~m:=\widehat{m}|_{\mathcal{S}_\mathcal{A}}.\]

\begin{lemma}    \label{Lem 1.3} Let $X$ be a real Banach space satisfying $(A_1)$ and suppose that $I$ satisfies $(A_2)-(A_3)$. If $u \in \mathcal{A}$ then $\widehat{m}(t u) = \widehat{m} (u)$,
for all $t>0$.
\end{lemma}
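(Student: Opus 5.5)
The plan is a direct computation based on Lemma \ref{Lem 1.2}(b). First we check that both sides are defined. Fix $u\in\mathcal{A}$ and $t>0$. Since $\mathcal{A}$ is a cone, $tu\in\mathcal{A}$, so $s_{(tu)}$ is well defined by $(A_2)$. Hence $\widehat{m}(tu)=s_{(tu)}\,tu$ makes sense.

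Next we invoke Lemma \ref{Lem 1.2}(b), which gives $s_{(tu)}=(1/t)s_u$. Substituting,
\[
\widehat{m}(tu)=s_{(tu)}\,tu=\frac{1}{t}s_u\,tu=s_u u=\widehat{m}(u),
\]
which is the claim.

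There is no real obstacle here. The only point needing care is that $tu\in\mathcal{A}$, so that $s_{(tu)}$ exists and part (b) applies. This is exactly the cone property $s\mathcal{A}=\mathcal{A}$ noted at the start of Section \ref{abstract setting}.

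Taking $t=1/\|u\|$ gives the useful consequence $\widehat{m}(u)=m(u/\|u\|)$. This is how the lemma will presumably be used later to pass between $\widehat{m}$ and $m$.
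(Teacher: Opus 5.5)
Your proof is correct and follows essentially the same route as the paper: apply Lemma~\ref{Lem 1.2}(b) to get $s_{(tu)}=s_u/t$ and substitute into $\widehat{m}(tu)=s_{(tu)}\,tu$. Your explicit check that $tu\in\mathcal{A}$, so that $s_{(tu)}$ is defined, is a point the paper leaves implicit.
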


\begin{proof} Let $u \in \mathcal{A}$ and $t>0$ arbitrary. By definition of $\widehat{m}$ we have $\widehat{m}(u) = s_u u$. Now, Lemma \ref{Lem 1.2}{$(b)$} implies $\widehat{m}(tu) = s_{(tu)} tu = \frac{s_u}{t}tu = s_u u$. Therefore, $\widehat{m}(tu) = \widehat{m}(u)$, for all $t > 0$.             
\end{proof}

The following result proves that $m$ is a homeomorphism between $\mathcal{S}_\mathcal{A}$ and $\mathcal{N}$, as we will see later on, such a topological characterization guarantees, under suitable conditions, that minimum points of $I$ on the Nehari set are critical points of $I$ on the whole space $X$.
\begin{proposition}\label{homeo} Let $X$ be a real Banach space satisfying $(A_1)$ and suppose that $I$ satisfies $(A_2)-(A_3)$. Then:	                                \label{Prop 1.1}
\par\noindent $(a)$ The mapping $\widehat{m}$ is continuous;
\par\noindent $(b)$ The mapping $m$ is a homeomorphism between $\mathcal{S}_\mathcal{A}$ and $\mathcal{N}$, and the inverse of $m$ is given by $m^{-1}(u) = \|u\|^{-1}u$.
\end{proposition}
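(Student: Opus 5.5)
We follow the scheme of \cite[Proposition 8]{SW}, adapted to the open cone $\mathcal{A}$. For part $(a)$, Lemma \ref{Lem 1.3} gives $\widehat{m}(u)=\widehat{m}(u/\|u\|)$. Since $u\mapsto u/\|u\|$ is continuous from $\mathcal{A}$ into $\mathcal{S}_{\mathcal{A}}$, it suffices to show that $w\mapsto s_w$ is continuous on $\mathcal{S}_{\mathcal{A}}$. Take $w_n\to w$ in $\mathcal{S}_{\mathcal{A}}$. The set $\mathcal{K}=\{w_n\}_n\cup\{w\}$ is compact and contained in $\mathcal{S}_{\mathcal{A}}$, because $w\in\mathcal{A}$ by assumption. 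By $(A_3)$ we therefore have $\delta\le s_{w_n}\le C_{\mathcal{K}}$.

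Passing to a subsequence, $s_{w_n}\to s_0\in[\delta,C_{\mathcal{K}}]$. Since $I\in C^1$ and $s_{w_n}w_n\to s_0w$, the identity $0=I'(s_{w_n}w_n)(s_{w_n}w_n)$ passes to the limit and gives $I'(s_0w)(s_0w)=0$. Here $s_0w\neq0$, so $s_0w\in\mathcal{N}$, and Lemma \ref{Lem 1.2}$(a)$ forces $s_0=s_w$. The limit does not depend on the subsequence, so the whole sequence satisfies $s_{w_n}\to s_w$. Hence $\widehat{m}$ is continuous. The key point here is that the compactness clause of $(A_3)$ must be applied to a compact set inside the open cone. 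This is why the limit $w$ has to lie in $\mathcal{S}_{\mathcal{A}}$ and not merely in its closure. Without the upper bound, $s_{w_n}$ could escape to infinity.

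For part $(b)$, $m$ is continuous as the restriction of $\widehat{m}$. By Lemma \ref{Lem 1.2}$(c)$, $m$ maps $\mathcal{S}_{\mathcal{A}}$ onto $\mathcal{N}$. Define $\check m(u)=u/\|u\|$ on $\mathcal{N}$. It is continuous because $0\notin\mathcal{N}$, and it takes values in $\mathcal{S}_{\mathcal{A}}$ because $\mathcal{N}\subset\mathcal{A}$ and $\mathcal{A}$ is a cone.

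It remains to check that $\check m$ is a two-sided inverse of $m$. For $w\in\mathcal{S}_{\mathcal{A}}$ we have $\check m(m(w))=s_ww/(s_w\|w\|)=w$. For $u\in\mathcal{N}$, Lemma \ref{Lem 1.2}$(a)$ gives $s_u=1$, and Lemma \ref{Lem 1.2}$(b)$ gives $s_{u/\|u\|}=\|u\|$. Consequently $m(\check m(u))=\|u\|\cdot u/\|u\|=u$. Therefore $m$ is a homeomorphism with $m^{-1}(u)=\|u\|^{-1}u$.
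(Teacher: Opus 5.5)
Your proposal is correct and follows essentially the same route as the paper: you normalize via Lemma \ref{Lem 1.3}, apply $(A_3)$ to the compact set $\{w_n\}\cup\{w\}\subset\mathcal{S}_{\mathcal{A}}$, identify every subsequential limit of $s_{w_n}$ as $s_w$ through Lemma \ref{Lem 1.2}$(a)$, and check both compositions with $u\mapsto u/\|u\|$ using Lemma \ref{Lem 1.2}$(a)$--$(b)$. The only cosmetic difference is that you pass to the limit in the Nehari identity directly, whereas the paper cites the closedness of $\mathcal{N}$ from Lemma \ref{Lem 1.2}$(d)$, which rests on the same continuity argument.
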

\begin{proof}
$(a)$ Let $\left\{u_n\right\} \subset \mathcal{A}$ and $u \in \mathcal{A}$ be such that $u_n \rightarrow u$ in $X$. From Lemma \ref{Lem 1.3}, we may replace $u_n$ and $u$ by $u_n/\|u_n\|$ and $u/\|u\|$, respectively. Relabeling these normalized vectors, we may therefore assume that $\{u_n\}\subset \mathcal{S}_{\mathcal{A}}$ and $u\in\mathcal{S}_{\mathcal{A}}$. Let $\mathcal{K}$ be the compact set given by the closure of $\{u_n\}$ in $\mathcal{S}_\mathcal{A}$. Then $(A_3)$ implies $\delta \leq s_{u_n} \leq c_{\mathcal{K}}$, for all $n \in \N$ and $\delta, c_{\mathcal{K}} > 0$. Passing to a subsequence if necessary, we obtain
$s_{u_n} \rightarrow \bar{s} > 0$. Thus, $\widehat{m}(u_n) \rightarrow \bar{s}u$ as $n \rightarrow +\infty$. Since $\mathcal{N}$ is closed and
$\{\widehat{m}(u_n)\} \subset \mathcal{N}$, we get $\bar{s}u \in \mathcal{N}$. From Lemma \ref{Lem 1.2}{$(a)$}, we have $\bar{s}=s_u$, which implies $\widehat{m}(u)=\bar{s}u$ and then
$\widehat{m}(u_n) \rightarrow \widehat{m}(u)$ in $X$ along this subsequence. Every convergent subsequence of $s_{u_n}$ has the same limit $s_u$; the compact interval bound then implies convergence of the whole sequence.

\medskip

$(b)$ Note that the mapping $m^{-1}(u)=u/\|u\|$ is well defined, since $\mathcal{N}\subset \mathcal{A}$ (see Lemma \ref{Lem 1.2}{$(a)$}) and $s\mathcal{A} = \mathcal{A}$ for all $s > 0$. To prove that the compositions between $m$ and $m^{-1}$ are the identity, it is enough to use again Lemma \ref{Lem 1.2}{$(a)$} and Lemma \ref{Lem 1.2}{$(b)$}.                                                                 
\end{proof}

We shall consider the functionals
\[\widehat{\Psi}: \mathcal{A} \rightarrow \R \ \ \ \ \mbox{and} \ \ \ \ \Psi: \mathcal{S}_\mathcal{A} \rightarrow \R,\]
defined by
\[\widehat{\Psi}(u)=I(\widehat{m}(u)) \ \ \ \ \mbox{and} \ \ \ \ \Psi:= \widehat{\Psi}|_{\mathcal{S}_\mathcal{A}}.\]
Although we cannot guarantee that $\mathcal{N}$ is a $C^1$-manifold, we shall show that $\widehat{\Psi}$ is of class $C^1$ and there is a one-to-one correspondence between critical points of $\Psi$ and nontrivial critical points of $I$.
\begin{proposition}\label{Abstract}                         \label{Prop 1.2}
 Let $X$ be a real Banach space satisfying $(A_1)$. Suppose that $I \in C^{1}(X, \R)$ satisfies $(A_2)$-$(A_3)$. Then:	                     
\par\noindent $(a)$ \ $\widehat{\Psi}\in C^{1}(\mathcal{A}, \R)$ and 
\begin{equation}      	                            \label{eq 1.7}
\widehat{\Psi}'(w)z = \frac{\left\|\widehat{m}(w)\right\|}{\left\|w\right\|}I'(\widehat{m}(w))z,
\end{equation}
for all $w\in\mathcal{A}$ and $z \in X$;
\par\noindent $(b)$ $\Psi \in C^{1}(\mathcal{S}_\mathcal{A}, \R)$ and
\begin{equation}                                    \label{eq 1.8}
\Psi'(w)z = \left\|m(w)\right\|I'(m(w))z,
\end{equation}
for all $w\in \mathcal{S}_\mathcal{A}$ and $z \in T_w (\mathcal{S}_\mathcal{A})$;
\par\noindent $(c)$ If $\left\{w_n\right\}\subset \mathcal{S}_\mathcal{A}$ is a Palais-Smale sequence for $\Psi$, then $\left\{m(w_n)\right\}\subset \mathcal{N}$ is a Palais-Smale sequence for $I$. If $\left\{m(w_n)\right\} \subset \mathcal{N}$ is a bounded Palais-Smale sequence for $I$, then $\left\{m^{-1}(m(w_n))\right\}=\{w_n\}$ is a Palais-Smale sequence for $\Psi$;
\par\noindent $(d)$ \ $w$ is a critical point of $\Psi$ if, and only if, $m(w)$ is a nontrivial critical point of $I$. Moreover, the corresponding critical values of $\Psi$ and $I$ coincide and
$\displaystyle\inf_{\mathcal{S}_\mathcal{A}}\Psi = \displaystyle\inf_{\mathcal{N}}I$.
\end{proposition}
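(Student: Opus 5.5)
We follow the Szulkin--Weth scheme (Proposition 9 / Corollary 10 in \cite{SW}), using that $\mathcal{A}$ is open and that Proposition \ref{homeo} gives continuity of $\widehat{m}$.

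For $(a)$, fix $w\in\mathcal{A}$ and $z\in X$. Since $\mathcal{A}$ is open, $w+tz\in\mathcal{A}$ for $|t|$ small. By maximality of $s_{w}$ and $s_{w+tz}$ on the respective rays (hypothesis $(A_2)$), we get the two-sided estimate
\[
I(s_{w}(w+tz))-I(s_w w)\le \widehat{\Psi}(w+tz)-\widehat{\Psi}(w)\le I(s_{w+tz}(w+tz))-I(s_{w+tz}w).
\]
Applying the mean value theorem to each side gives points $\tau_t,\sigma_t\in(0,1)$ with
\[
s_w I'(s_w(w+\tau_t tz))\,tz \le \widehat{\Psi}(w+tz)-\widehat{\Psi}(w)\le s_{w+tz} I'(s_{w+tz}(w+\sigma_t tz))\,tz .
\]
By Proposition \ref{homeo}(a), $s_{w+tz}\to s_w$ as $t\to0$, because $s_u=\|\widehat{m}(u)\|/\|u\|$ is continuous on $\mathcal{A}$. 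Dividing by $t$ and letting $t\to0^\pm$ yields the Gateaux derivative $s_wI'(s_ww)z$. Since $s_w=\|\widehat{m}(w)\|/\|w\|$, this is exactly \eqref{eq 1.7}. The right-hand side of \eqref{eq 1.7} is linear and bounded in $z$, and it depends continuously on $w$ because $I\in C^1$ and $\widehat{m}$ is continuous. Hence $\widehat{\Psi}\in C^1(\mathcal{A},\R)$.

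For $(b)$, note that $\mathcal{S}_\mathcal{A}$ is an open subset of the $C^1$-manifold $\mathcal{S}$ (Remark \ref{rema1}). So $\Psi$, being the restriction of $\widehat{\Psi}$, is $C^1$, and \eqref{eq 1.8} follows from \eqref{eq 1.7} with $\|w\|=1$.

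For $(c)$, we compare norms using the splitting $X=T_w\mathcal{S}\oplus\R w$, writing $u=m(w)$. First, $I'(u)w=0$ because $u\in\mathcal{N}$. Second, $\|u\|\ge\delta$ by $(A_3)$ and Lemma \ref{Lem 1.2}(d). Third, the projection $z\mapsto z-(J(w)z)w$ onto $T_w\mathcal{S}$ has norm at most $1+\|J(w)\|$, which is uniformly bounded since $J$ is bounded on bounded sets. Together these give
\[
\|I'(u)\|\le \frac{C}{\|u\|}\,\|\Psi'(w)\|_{*}\le \frac{C}{\delta}\,\|\Psi'(w)\|_{*},
\qquad
\|\Psi'(w)\|_*\le \|u\|\,\|I'(u)\|.
\]
The first inequality combined with $\Psi(w_n)=I(m(w_n))$ gives the first claim. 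The second needs $\|u_n\|$ bounded, which is exactly why boundedness is assumed in the converse.

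For $(d)$, the same inequalities show $\Psi'(w)=0$ iff $I'(m(w))=0$. Also $m(w)\ne0$, and every nontrivial critical point lies in $\mathcal{N}$, so it equals $m(w)$ for $w=m^{-1}$ of it. The values satisfy $\Psi(w)=I(m(w))$, and the infima agree by surjectivity of $m$ (Proposition \ref{homeo}(b)).

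The main obstacle is $(a)$: bounding the difference quotient from both sides. The upper bound requires $s_{w+tz}\to s_w$, and this is precisely where continuity of $\widehat{m}$, and through it the compactness bound in $(A_3)$, enters.
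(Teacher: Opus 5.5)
Your proposal is correct and follows essentially the same route as the paper: the two maximality inequalities with the mean value theorem and continuity of $\widehat{m}$ for (a), restriction to the open subset $\mathcal{S}_\mathcal{A}\subset\mathcal{S}$ for (b), and the splitting $X=T_w(\mathcal{S})\oplus\mathbb{R}w$ with $I'(u)w=0$ and $\|u\|\geq\delta$, giving $\|\Psi'(w)\|_*\leq\|u\|\,\|I'(u)\|\leq c\|\Psi'(w)\|_*$, for (c)--(d). The only minor difference is in (a): you pass from the G\^ateaux derivative to $C^1$ via the standard fact that a continuous G\^ateaux derivative on an open set is a Fr\'echet derivative, whereas the paper reruns the two maximality inequalities with an arbitrary increment $h\to0$ to obtain Fr\'echet differentiability directly.
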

\begin{proof} $(a)$ Let $w\in\mathcal{A}$, $z \in X$ and $t >0$. By definitions of $\widehat{\Psi}$ and $\widehat{m}$, it follows that
\[\widehat{\Psi}(w+tz) - \widehat{\Psi}(w) = I(\widehat{m}(w + tz)) - I(\widehat{m}(w)) = I(s_{w+tz}(w + tz)) - I(s_w (w)).\]
Taking into account that $s_w$ is a maximum point for the fibering map $\alpha_w$, we obtain
\[I(s_{(w+tz)}(w + tz)) - I(s_w(w)) \leq I(s_{(w + tz)}(w + tz)) - I(s_{(w+tz)}(w)).\]
By Mean Value Theorem, there exists $\theta \in (0, 1)$ such that
\begin{equation}				                \label{eq 1.9}
\widehat{\Psi}(w+tz) - \widehat{\Psi}(w) \leq I'(s_{(w + tz)}(w + \theta tz))s_{(w +tz)}tz.
\end{equation}
Analogously, there exists $\zeta \in (0, 1)$ such that
\begin{equation}	                              \label{eq 1.10}
\widehat{\Psi}(w+tz) - \widehat{\Psi}(w) \geq I'(s_{w} (w + \zeta tz))s_{w}tz.
\end{equation}

\medskip

Using the continuity of the mapping $\widehat{m}$, $(\ref{eq 1.9})$ and $(\ref{eq 1.10})$ we obtain
\[\lim_{t \rightarrow 0} \frac{\widehat{\Psi}(w + tz) - \widehat{\Psi}(w)}{t} = I'(s_w w)s_w z.\]
Since $s_w = \left\|\widehat{m}(w)\right\| / \left\|w\right\|$, for all $w\in\mathcal{A}$, we obtain $(\ref{eq 1.7})$.

\medskip

To justify Fr\'echet, rather than only directional, differentiability, apply the same two maximality inequalities with an arbitrary increment $h\in X$ tending to zero. Since $s_{w+h}\to s_w$ and $I'$ is norm-continuous, both bounding expressions equal $s_wI'(s_ww)h+o(\|h\|)$. Thus \eqref{eq 1.7} is the Fr\'echet derivative. The map $w\mapsto s_wI'(s_ww)\in X'$ is continuous, so $\widehat\Psi\in C^1(\mathcal A,\mathbb R)$.     

\medskip

$(b)$ For $w\in \mathcal{S}_\mathcal{A}$ and $z \in T_w (\mathcal{S}_\mathcal{A})$, let $\gamma: [0, 1] \rightarrow \mathcal{S}_\mathcal{A}$ be a differentiable curve satisfying $\gamma(0)=w$ and $\gamma'(0) = z \in T_{w}(\mathcal{S}_\mathcal{A})$. Then, by definition
\[\Psi'(w)z = \frac{d}{dt}\widehat{\Psi}(\gamma(t))|_{t=0}.\]

As a consequence of the Chain rule and $(a)$, we obtain
\[\Psi'(w)z = \widehat\Psi'(\gamma(0))\gamma'(0) = \left\|m(w)\right\| I'(m(w))z.\]

\medskip

$(c)$ Hypothesis $(A_1)$ implies $X = T_{w}(\mathcal{S}_\mathcal{A}) \oplus \R w$, for each $w \in \mathcal{S}_\mathcal{A}$.  $P: X \rightarrow T_{w}(\mathcal{S}_\mathcal{A})$, defined as $P(v + tw) = v$, let us show that there exists $c > 0$ such that
\begin{equation}\label{treta}
\left\|P(v + tw)\right\| \leq c \left\|v + tw\right\|, \ \ \forall \ w \in \mathcal{S}_\mathcal{A}, \ \forall v \in T_{w}(\mathcal{S}_\mathcal{A}) \ \ \mbox{and} \ t \in \R.
\end{equation}
By $(A_1)$, we know that $T_{w}(\mathcal{S}_\mathcal{A})=Ker\{J(w)\}$ and $J(w)w=1$. Thus, 
\[J(w)(v +tw) = J(w)v + tJ(w)w = t.\]
If $\left\|v + t w\right\| = 1$, Once $J$ is bounded on bounded sets, we get
\[\left|t\right|=|J(w)(v+tw)|\leq\left\|J(w)\right\|_{X'}\left\|v+tw\right\|\leq C,\]
for some constant $C > 0$. Therefore,
\begin{equation}\label{treta2}
\left\|P(v + tw)\right\| = \left\|v + tw - tw\right\| \leq  \left\|v + tw\right\|+ \left|t\right| \leq c,
\end{equation}
for all $w \in \mathcal{S}_\mathcal{A}$, $v \in T_{w}(\mathcal{S}_\mathcal{A})$ and $t \in \R$ such that $\left\|v + t w\right\| = 1$, where $c:=1+C$. It follows from \eqref{treta2} that \eqref{treta} holds. Furthermore, by  $(b)$, we have
\[\left\|\Psi'(w)\right\|_{T_{w}(\mathcal{S}_\mathcal{A})'} = \sup_{{z \in T_{w}(\mathcal{S_\mathcal{A}})}, {\left\|z\right\| = 1}} \Psi'(w)z = \left\|u\right\| \sup_{{z \in T_{w}(\mathcal{S}_\mathcal{A})}, {\left\|z\right\| = 1}}I'(u)z\]
with $u = m(w)$. Thus,
$$
    \left\|\Psi'(w)\right\|_{T_{w}(\mathcal{S}_\mathcal{A})'}\leq  \|u\|\|I'(u)\|_{X'}= \left\|u\right\| \sup_{z+tw \in X\backslash{\left\{0\right\}}}\frac{|I'(u)(z + tw)|}{\left\|z + tw\right\|}.
    $$
    Now, by \eqref{treta} and
\[I'(u)w = I'(u)\left(\frac{u}{\left\|u\right\|}\right) = 0,\]
we obtain
\begin{equation}                                  \label{eq 1.12}
    \left\|\Psi'(w)\right\|_{T_{w}(\mathcal{S}_\mathcal{A})'}\leq  \|u\|\|I'(u)\|_{X'} \leq c\left\|u\right\|\sup_{{z  \in T_{w}(\mathcal{S}_\mathcal{A})\backslash{\left\{0\right\}}}, \left\|z\right\|=1}\frac{|I'(u)z|}{\left\|z\right\|} = c\left\|\Psi'(w)\right\|_{T_{w}(\mathcal{S}_\mathcal{A})'}.
\end{equation}
Since $u=m(w)\in\mathcal N$, Lemma \ref{Lem 1.2} gives $\|u\|\geq\delta$. The second inequality in \eqref{eq 1.12} therefore transfers the vanishing derivative of $\Psi$ to that of $I$. For the converse, the first inequality is used together with the boundedness of $m(w_n)$. The energy convergence is identical by definition of $\Psi$.

\medskip

$(d)$ The proof of the first part of this item follows from the inequality in \eqref{eq 1.12}. The second part is a consequence of the definition of $\Psi$. 
\end{proof}

In the following, we provide an abstract result for the existence of ground state critical points and a minimax characterization for the infimum $c_\mathcal{N}:=\inf_{u \in {\mathcal{N}}}I(u)$, in the case that $\mathcal{N}$ is homeomorphic to $\mathcal{S}_{\mathcal{A}}$. The central feature is the presence of hypothesis $(H)$, which separates the minimizing energy from a lower semicontinuous barrier on the relative boundary. It allows Ekeland's principle to be applied on the complete closure of $\mathcal S_{\mathcal A}$, and forces the resulting variational points to remain in its interior. It does not assert a uniform positive distance from the boundary for every minimizing sequence.

Here compactness is needed only for ambient Palais--Smale sequences lying in the Nehari set: we write $(PS)^{\mathcal N}_c$ when every sequence $u_n\in\mathcal N$ with $I(u_n)\to c$ and $\|I'(u_n)\|_{X'}\to0$ has a subsequence converging in $X$. This refers to the derivative of $I$ on $X$, not to a derivative of $I|_{\mathcal N}$. The usual $(PS)_c$ condition implies this property.

\begin{theorem}\label{level} Let $X$ be a real Banach space satisfying $(A_1)$, $I:X\to \R$ a $C^{1}$-functional satisfying $(A_2)-(A_3)$, the $(PS)^{\mathcal N}_{c_{\mathcal{N}}}$-condition and such that: 

\medskip

\begin{itemize}
\item[$(H)$] There exists a lower semicontinuous map, $\Gamma:\partial \mathcal{S}_{\mathcal{A}}\to\R\cup\{+\infty\}$, verifying $c_{\mathcal{N}}<\displaystyle\inf_{u\in \partial \mathcal{S}_{\mathcal{A}}}\Gamma(u)$ and such that, if $\{u_n\}\subset\mathcal{S}_{\mathcal{A}}$ and $u_n\rightarrow u$ in $X$ with $u\in \partial \mathcal{S}_{\mathcal{A}}$, then 
$$
\liminf_{n\to\infty}{I(m(u_n))}\geq\Gamma(u).
$$ 
\end{itemize}
Then, there exists a nontrivial critical point of $I$ at the strictly positive level 
\[c_{\mathcal{N}} = \inf_{w \in {\mathcal{A}}}\max_{s>0} I(sw) = \inf_{w \in \mathcal{S}_\mathcal{A}}\max_{s>0} I(sw).\]
\end{theorem}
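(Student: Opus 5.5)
Two things have to be shown: the minimax identity with $c_{\mathcal N}>0$, and the attainment of $c_{\mathcal N}$ at a critical point. I would first settle the minimax identity. By $(A_2)$, for each $w\in\mathcal A$ the fibering map $\alpha_w$ increases on $(0,s_w)$ and decreases afterwards, so $\max_{s>0}I(sw)=I(s_ww)=\widehat\Psi(w)$. Lemma \ref{Lem 1.3} gives $\widehat\Psi(tw)=\widehat\Psi(w)$, so the infimum over $\mathcal A$ equals the infimum over $\mathcal S_{\mathcal A}$. This equals $\inf_{\mathcal N}I=c_{\mathcal N}$ by Lemma \ref{Lem 1.2}(c) and Proposition \ref{Abstract}(d).

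For positivity, note that $I(0)=0$ and $\alpha_w$ is strictly increasing on $(0,s_w)$, so $\Psi(w)>0$ for every $w$. This gives $c_{\mathcal N}\ge 0$. To obtain a strict inequality, I would not try to bound $c_{\mathcal N}$ away from zero directly. Instead I would produce a minimizer $u\in\mathcal N$ with $I(u)=c_{\mathcal N}$; then $c_{\mathcal N}=\Psi(u/\|u\|)>0$ follows at once.

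To produce that minimizer I would apply Ekeland's principle to $\Psi$ on the closure. Let $M=\overline{\mathcal S_{\mathcal A}}=\mathcal S_{\mathcal A}\cup\partial\mathcal S_{\mathcal A}$, which is a closed subset of $X$ and hence a complete metric space. On $M$ define $\widetilde\Psi=\Psi$ on $\mathcal S_{\mathcal A}$ and $\widetilde\Psi=\Gamma$ on $\partial\mathcal S_{\mathcal A}$. I need $\widetilde\Psi$ to be lower semicontinuous. Inside $\mathcal S_{\mathcal A}$ this holds because $\Psi$ is continuous. Along boundary points it holds by the lower semicontinuity of $\Gamma$. For sequences in $\mathcal S_{\mathcal A}$ converging to a boundary point, it is exactly the liminf inequality in $(H)$. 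Mixed sequences are handled by splitting into subsequences.

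$\widetilde\Psi$ is bounded below by $0$ and is not identically $+\infty$. Moreover, $\inf_M\widetilde\Psi=c_{\mathcal N}$, because $c_{\mathcal N}<\inf\Gamma$. Fix $\varepsilon_n\to0$ small enough that $c_{\mathcal N}+\varepsilon_n<\inf_{\partial\mathcal S_{\mathcal A}}\Gamma$. Ekeland then gives $w_n\in M$ with $\widetilde\Psi(w_n)\le c_{\mathcal N}+\varepsilon_n$ and
\[
\widetilde\Psi(v)\ge\widetilde\Psi(w_n)-\varepsilon_n\|v-w_n\| \quad\text{for all } v\in M.
\]
The energy bound forces $w_n\in\mathcal S_{\mathcal A}$, and $\mathcal S_{\mathcal A}$ is open in $\mathcal S$. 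So for any $z\in T_{w_n}(\mathcal S_{\mathcal A})$, a $C^1$ curve $\gamma$ on $\mathcal S$ with $\gamma(0)=w_n$ and $\gamma'(0)=z$ stays in $\mathcal S_{\mathcal A}$ for small $t$. Inserting $v=\gamma(\pm t)$ into the Ekeland inequality and using Proposition \ref{Abstract}(b) yields $\|\Psi'(w_n)\|\le\varepsilon_n$. Thus $\{w_n\}$ is a Palais--Smale sequence for $\Psi$ at level $c_{\mathcal N}$.

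Proposition \ref{Abstract}(c) then makes $u_n=m(w_n)\in\mathcal N$ an ambient Palais--Smale sequence for $I$ at level $c_{\mathcal N}$. By $(PS)^{\mathcal N}_{c_{\mathcal N}}$, after passing to a subsequence, $u_n\to u$. Lemma \ref{Lem 1.2}(d) says $\mathcal N$ is closed, so $u\in\mathcal N$ and $u\ne0$. By continuity of $I$ and $I'$, $I(u)=c_{\mathcal N}$ and $I'(u)=0$. This is the required nontrivial critical point, and positivity follows as explained above.

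I expect the main obstacle to be the lower semicontinuity of the extension $\widetilde\Psi$ at boundary points, together with the justification that the relative-boundary closure is complete. Completeness comes from closedness of $\mathcal S$ in $X$. The second delicate point is the step from the Ekeland inequality to $\|\Psi'(w_n)\|\le\varepsilon_n$. It requires controlling $\|\gamma(t)-w_n\|$ by $t\|z\|+o(t)$, which I would take from the $C^1$ chart of $\mathcal S$ given in Remark \ref{rema1}.
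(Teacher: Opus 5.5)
Your proposal is correct and follows essentially the same route as the paper: Ekeland's principle applied to the extension of $\Psi$ by $\Gamma$ on the complete set $\overline{\mathcal S_{\mathcal A}}$, the separation $c_{\mathcal N}<\inf\Gamma$ to keep the almost-minimizers in $\mathcal S_{\mathcal A}$, transfer to an ambient Palais--Smale sequence via Proposition \ref{Abstract}(c), and positivity read off from the attained fibering maximum. The only difference is cosmetic: you use the weak form of Ekeland's principle directly, whereas the paper starts from a prescribed minimizing sequence with $\epsilon=1/n^2$, $\delta=1/n$, and the resulting proximity $\|v_n-\hat v_n\|\to0$ is not needed for the conclusion.
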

\begin{proof}
It follows from Remark \ref{bounded} and definition of $c_{\mathcal{N}}$ that there exists a sequence $\{u_{n}\}\subset\mathcal{N}$ such that $I(u_{n})\to c_{\mathcal{N}}$. By $(A_2)-(A_3)$ and Proposition \ref{homeo}, we conclude that $v_n:=m^{-1}(u_n) \in \mathcal{S}_\mathcal{A}$ and $\Psi(v_n) \rightarrow c_\mathcal{N}$. Let us prove that there exists a $(PS)_{c_{\mathcal{N}}}$-sequence for $\Psi$. To this end, we will use the Ekeland's variational principle, see \cite{Eke}. Let $M:=\overline{\mathcal{S}_\mathcal{A}}$ be the complete metric space with the metric induced by the norm of $X$, and consider the functional $\Upsilon: M \rightarrow \R \cup \left\{+\infty\right\}$ given by
\begin{equation}	             \label{eq 2.30}
\Upsilon(v) = \left\{
\begin{array}{rcl}
\Psi(v),  &\mbox{if}& \ v \in \mathcal{S}_\mathcal{A},\\[0.2cm]
\Gamma(v),  &\mbox{if} &\ v \in \partial\mathcal{S}_\mathcal{A}.
\end{array}
\right.
\end{equation}

{\bf Claim 1:} $\Upsilon$ is lower semicontinuous. 

\medskip

In fact, let $\left\{v_n\right\} \subset M$ and $v \in M$ be such that $v_n \rightarrow v$ in $M$ as $n\to\infty$. If $v \in \mathcal{S}_\mathcal{A}$, then, for $n$ large,  
$$
\Upsilon(v_n) = \Psi(v_n) \rightarrow \Psi(v) = \Upsilon(v), 
$$
because $\Psi$ is continuous, see Proposition \ref{Abstract}. On the other hand, if $v \in \partial\mathcal{S}_\mathcal{A}$, first take a subsequence realizing $\liminf\Upsilon(v_n)$, and then, if necessary, a further subsequence whose terms all lie in the interior or all lie in the boundary. {\bf Case 1:} This subsequence lies in $\mathcal S_{\mathcal A}$. By hypothesis $(H)$, it follows that
\[\Upsilon(v)=\Gamma(v) \leq \liminf_{n \rightarrow +\infty} \Psi(v_n) = \liminf_{n \rightarrow +\infty}\Upsilon(v_n).\]
{\bf Case 2:} This subsequence lies in $\partial\mathcal S_{\mathcal A}$. In this case,
\[
\Upsilon(v) = \Gamma(v) \leq \liminf_{n \rightarrow +\infty}\Gamma(v_n)= \liminf_{n \rightarrow +\infty}\Upsilon(v_n),
\]
since, by hypothesis $(H)$, the functional $\Gamma$ is lower semicontinuous. 

It is a consequence of hypothesis $(H)$ and the lower boundedness of $I$ on $\mathcal{N}$ that $\Upsilon$ is bounded from below and $\inf_{M} \Upsilon = c_{\mathcal{N}}$.
Thus, by Ekeland's variational principle, for $\epsilon, \delta > 0$ small enough and $u \in \Upsilon^{-1}[c_{\mathcal{N}}, c_{\mathcal{N}} +\epsilon]$, there exists $v \in M$ such that
\begin{equation}                      \label{eq 101}
    c_{\mathcal{N}} \leq \Upsilon(v) \leq \Upsilon(u), \ \  \left\|u - v\right\| \leq \delta \ \mbox{and} \ \Upsilon(w) > \Upsilon(v) - (\epsilon/\delta)\left\|v - w\right\|, \ \ \ \forall \ w \neq v.
\end{equation}                                 
On the other hand, it follows from $(H)$ that, for $0<\epsilon<\inf_{\partial\mathcal{S}_{\mathcal{A}}}\Gamma-c_{\mathcal{N}}$, we have $\Upsilon^{-1}[c_{\mathcal{N}}, c_{\mathcal{N}}+\epsilon]\subset \mathcal{S}_{\mathcal{A}},$
\begin{equation}                                      \label{eq 100}
    \Upsilon^{-1}[c_{\mathcal{N}}, c_{\mathcal{N}}+\epsilon] = \Psi^{-1}[c_{\mathcal{N}}, c_{\mathcal{N}}+\epsilon] \ \ \mbox{and} \ \ \Upsilon(v) = \Psi(v).
\end{equation}
   Choosing the initial minimizing sequence so that $\Psi(v_n)\leq c_{\mathcal N}+1/n^2$, we can take $\epsilon = 1/n^2$, $\delta = 1/n$ and $u=v_n$, to obtain $v=\hat{v}_n \in \mathcal{S}_\mathcal{A}$ satisfying (by \eqref{eq 101})
\begin{equation}                                      \label{eq 233}
    \Psi(\hat{v}_n) \rightarrow c_\mathcal{N},
\end{equation}
\begin{equation}                                      \label{eq 234}
\left\|v_n - \hat{v}_n\right\| \rightarrow 0    
\end{equation}
and
\begin{equation}                                      \label{eq 235}
    \Upsilon(w) > \Upsilon(\hat{v}_n) - \left(1/n\right)\left\|\hat{v}_n - w\right\|, \ \ \ \forall \ w \neq \hat{v}_n.
\end{equation}

Let $\gamma_n: (-\delta_n, \delta_n) \rightarrow \mathcal{S}_\mathcal{A}$ be a differentiable curve, with $\delta_n > 0$ small enough, such that
\begin{equation}                                       \label{eq 236}
    \gamma_n(0) = \hat{v}_n \ \ \mbox{and} \ \ \gamma'_n(0) = z \in T_{\hat{v}_n}(\mathcal{S}_\mathcal{A})\setminus \{0\},
    \end{equation}
where $T_{\hat{v}_n}(\mathcal{S}_\mathcal{A})$ denotes the tangent space of $\mathcal{S}_\mathcal{A}$ at $\hat{v}_n$. Considering $w = \gamma_n(t)$, with $t\neq 0$, it follows from $(\ref{eq 235})$ and $(\ref{eq 236})$ that
\begin{equation}                                        \label{eq 237}
    \Upsilon(\gamma_n(t)) > \Upsilon(\hat{v}_n) - \left(1/n\right)\left\|\gamma_n(0) - \gamma_n(t)\right\|.
\end{equation}
It follows from definition of $\Upsilon$, that
\begin{equation}                                        \label{eq 238}
    -[\Psi(\gamma_n(t)) - \Psi(\gamma_n(0))] < (1/n)\left\|\gamma_n(0) - \gamma_n(t)\right\|.
\end{equation}
Multiplying both sides of $(\ref{eq 238})$ by $1/t$, passing to the limit as $t \rightarrow 0^+$ and using $(\ref{eq 236})$, we obtain 
\[-\Psi'(\hat{v}_n)z \leq (1/n)\left\|z\right\|,\]
where $z \in T_{\hat{v}_n}(\mathcal{S}_\mathcal{A})$ is arbitrary. Since $\Psi'$ is a linear and continuous functional, we have $\left|\Psi'(\hat{v}_n)z\right| \leq (1/n)\left\|z\right\|$.
Thus, 
\begin{equation}\label{okk}
\left\|\Psi'(\hat{v}_n)\right\|_{T_{\hat{v}_n}(\mathcal{S}_\mathcal{A})'} \rightarrow 0 \ \mbox{as $n \rightarrow +\infty$}.
\end{equation}
By $(\ref{eq 233})$ and $(\ref{okk})$, we conclude that $\left\{\hat v_n\right\}$ is a Palais-Smale sequence for $\Psi$ at the level $c_{\mathcal{N}}$.

\medskip

Set $\hat u_n:=m(\hat v_n)$. Proposition \ref{Abstract} shows that $\{\hat u_n\}\subset\mathcal N$ is an ambient $(PS)_{c_{\mathcal N}}$ sequence for $I$. The assumed $(PS)^{\mathcal N}_{c_{\mathcal N}}$ condition therefore provides, up to a subsequence, $\hat u_n\to u$ in $X$. The closedness and positive distance from zero in Lemma \ref{Lem 1.2}(d) give $u\in\mathcal N$. By continuity,
\[I'(u)=0\qquad\mbox{and}\qquad I(u)=c_{\mathcal N}.\]
Thus $u$ is nontrivial. Every nonzero critical point belongs to $\mathcal N$, so it is a ground-state critical point.

\medskip

It remains to prove the minimax characterization of $c_{\mathcal{N}}$. Since $\widehat{m}(w) = \widehat{m}(w/\left\|w\right\|)$, for all $w \in \mathcal{A}$, we have
\begin{equation}					 \label{eq 1.13}
\inf_{w \in \mathcal{A}}\max_{s > 0} I(sw) = \inf_{w \in {\mathcal{S}_\mathcal{A}}}\max_{s > 0}I(sw).
\end{equation}	                                                      
The definition of $\Psi$ implies
\begin{equation}			        \label{eq 1.14}
\inf_{w \in {\mathcal{S}_\mathcal{A}}}\max_{s > 0}I(sw) = \inf_{w \in \mathcal{S}_\mathcal{A}} \Psi(w).
\end{equation}
It is a consequence of Proposition \ref{Abstract}(d), $(\ref{eq 1.13})$ and $(\ref{eq 1.14})$, that
\[c_{\mathcal{N}} = \inf_{w \in \mathcal{S}_\mathcal{A}} \Psi(w)= \inf_{w \in \mathcal{S}_{\mathcal{A}}}\max_{s > 0}I(sw)=\inf_{w \in \mathcal{A}}\max_{s > 0} I(sw).\] Finally, since the nonzero minimizer $u$ is the positive fibering maximum on its ray and $I(0)=0$, assumption $(A_2)$ gives $c_\mathcal N=I(u)>0$.                             
\end{proof}

\begin{remark}
If $\mathcal{A}=X\backslash\{0\}$ in Theorem \ref{level} then the relative boundary is empty and assumption $(H)$ is omitted. Assumption $(A_2)$ remains necessary. The proof applies with $\Upsilon=\Psi$ on the complete sphere.
\end{remark}

To finish this section we would like to point out that a symmetric counterpart of Theorem \ref{level}, ensuring the existence of multiple critical points, can indeed be derived from the ideas developed herein. However, pursuing such an extension lies beyond the scope of the present work.

\section{Applications}\label{applica}

The main results in subsections \ref{31} and \ref{32}, that is, Theorems \ref{dito} and \ref{Teo 4.1}, complement the study made in \cite{BPS} by considering other classes of reactions terms.

All six applications involve proper spherical cones with nonempty relative boundary, so the full-sphere formulation of \cite{SW} does not apply directly. A nonempty boundary alone, however, does not distinguish our method from \cite[Section~3.4]{SW}. The first two applications and the Kirchhoff problem use infinite barriers; the asymptotic problem allows finite barriers, Subsection~\ref{finite-cubic} realizes a finite boundary limit through a different mechanism, and Subsection~\ref{gradient-barrier} retains the eigenfunction-gradient profile in its finite threshold. These last two applications require the finite-barrier alternative in $(H)$.

\subsection{An integro-differential problem: Perturbation of the logistic function}\label{31}
In this section, we are interested in studying the existence of a ground-state solution for the following problem
\begin{equation}	        \label{Prob P_2}\tag{$P_1$}
\left\{
\begin{array}{rcl}
-\Delta u + a \displaystyle\int_{\Omega} u \ dx &=& \lambda u - u^2 + g(u) \ \mbox{in} \ \Omega,\\[0.4cm]
u &=& 0 \ \mbox{on} \ \partial \Omega,
\end{array}
\right.
\end{equation}
where $\Omega \subset \R^N$ is a bounded domain with smooth boundary, $1 \leq N < 6$, $a > 0$ and $\lambda < \lambda_1(a)$. Set $\kappa:=\lambda_1(a)-\lambda>0$, where $\lambda_1(a)$ is the first eigenvalue associated to the operator given in the left hand side of the first equation in \eqref{Prob P_2} with the Dirichlet boundary condition (see \cite{BPS} for details about the existence of eigenvalues for this class of problems).  Furthermore, we assume that $g \in C(\R)$ satisfies:
\begin{itemize}
    \item[$(g_1)$]  $\displaystyle\lim_{s \rightarrow 0} \frac{g(s)}{s} = 0 \ \ \ \mbox{and} \ \ \ \displaystyle\lim_{\vert s\vert \rightarrow +\infty} \frac{g(s)}{s} = g_{\infty}\in[0,\kappa)$;

    \item[$(g_2)$] The map $s \mapsto g(s)/\vert s\vert$ is nondecreasing on $\mathbb R\setminus\{0\}$.
\end{itemize}

\medskip

Problems involving an integro-differential operator like in \eqref{Prob P_2} have attracted a lot of attention in recent years since the seminal papers \cite{AB, ASHR} (see also \cite{Cat, PF}). We recommend the following references to illustrate the presence of the present operator as a model for situations in different fields of science: \cite{ASHR, FP, Cha}. Some other recent papers related to this class of problems are \cite{ACS, CLL, dLd}. 

\medskip

Associated with the problem \eqref{Prob P_2}, we have the energy functional $I_a: H^{1}_0(\Omega) \rightarrow \R$, given by 
\begin{equation}                     \label{eq 3.1}
I_a(u) = \frac{1}{2}\int_{\Omega}\left|\nabla u\right|^2 \ dx + \frac{a}{2} \left(\int_{\Omega} u \ dx\right)^2 - \frac{\lambda}{2}\int_{\Omega} u^2 \ dx + \frac{1}{3}\int_{\Omega}u^3 \ dx - \int_{\Omega} G(u) \ dx,
\end{equation}
where $G(s) = \int^s_{0} g(\tau) \ d\tau$. It can be shown that $I_a \in C^1(H^{1}_{0}(\Omega), \R)$ with 
\[I_a'(u)v = \int_{\Omega}\nabla u\nabla v \ dx + a \int_{\Omega} u \ dx \int_{\Omega} v \ dx - \lambda \int_{\Omega} u v \ dx + \int_{\Omega} u^2 v \ dx - \int_{\Omega} g(u) v \ dx, \ \ \ \forall \ v \in H^{1}_{0}(\Omega).\]
For each $a \in (0, (\lambda_2 - \lambda_1)/\vert \Omega\vert)$, where $\lambda < \lambda_1(a)$ and $\lambda_1$ and $\lambda_2$ are the first and the second eigenvalue of $(-\Delta,\, H_0^1(\Omega))$, respectively. The norm
\[\left\|u\right\|_a^2 := \int_{\Omega}\vert\nabla u\vert^2 \ dx + a \left(\int_{\Omega}u \ dx\right)^2 - \lambda\int_{\Omega} u^2 \ dx,\]
whose associated inner product is
$$
\langle u, v\rangle_a:= \int_{\Omega}\nabla u\nabla v \ dx + a \int_{\Omega}u \ dx\int_{\Omega}v \ dx - \lambda\int_{\Omega} u v \ dx,
$$
is equivalent to the usual norm of $H^{1}_{0}(\Omega)$, see \cite{BPS}. More precisely, with
\[
B_a(u,u)=\int_\Omega|\nabla u|^2dx+a\left(\int_\Omega u\,dx\right)^2,
\qquad \lambda_1(a)=\inf_{u\neq0}\frac{B_a(u,u)}{|u|_2^2},
\]
the shifted norm satisfies
\begin{equation}\label{shifted-spectral}
\|u\|_a^2\geq\kappa |u|_2^2,\qquad \kappa=\lambda_1(a)-\lambda>0.
\end{equation}
The Nehari set associated with the functional $I_a$ is defined by
\[\mathcal{N}_a = \left\{u \in H^{1}_{0}(\Omega)\backslash\left\{0\right\}: \Vert u\Vert_a^2 = -\int_{\Omega} u^3 \ dx + \int_{\Omega}g(u)u \ dx\right\}.\]
Let us define
\begin{equation}       \label{eq 3.2}
\mathcal{A} = \left\{u \in H^{1}_{0}(\Omega): \int_{\Omega} u^3 \ dx < 0\right\}
\end{equation}
and $\mathcal{S}_\mathcal{A}:=\mathcal{S}_a\cap\mathcal{A}$, given by
\[\mathcal{S}_\mathcal{A} = \left\{u \in \mathcal{S}_a: \int_{\Omega} u^3 \ dx < 0\right\},\]
where $\mathcal{S}_a := \left\{u \in H^{1}_{0}(\Omega): \Vert u\Vert_a = 1\right\}$. We also denote by $\partial\mathcal{S}_\mathcal{A}$ the relative boundary of the set $\mathcal{S}_\mathcal{A}$ as a topological subspace of $\mathcal{S}_a$, which is given by
\[\partial\mathcal{S}_\mathcal{A} = \left\{u \in \mathcal{S}_a: \int_{\Omega} u^3 \ dx = 0\right\}.\]
Finally, $\mathcal{S}^c_\mathcal{A}$ is the complement of the set $\mathcal{S}_\mathcal{A}$ in $\mathcal{S}_a$, defined by
\[\mathcal{S}^c_\mathcal{A} = \left\{u \in \mathcal{S}_a: \int_{\Omega} u^3 \ dx \geq 0\right\}.\]

The next lemma gives some properties of the set $\mathcal{A}$.
\begin{lemma}                             \label{Lem 3.2}
Suppose that $a \in \left(0, (\lambda_2 - \lambda_1)/\vert \Omega\vert\right)$ and $\lambda < \lambda_1(a)$. Then, $\mathcal{A}$ is a nonempty open set and $s\mathcal{A}\subset \mathcal{A}$, for all $s > 0$.
\end{lemma}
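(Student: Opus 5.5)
The lemma has three parts, which I will treat separately: the cone property, openness, and nonemptiness.

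\emph{Cone property.} For $s>0$ and $u\in\mathcal{A}$ we have $\int_\Omega (su)^3\,dx=s^3\int_\Omega u^3\,dx<0$, so $su\in\mathcal{A}$. Also $0\notin\mathcal{A}$, so $\mathcal{A}\subset H^1_0(\Omega)\setminus\{0\}$.

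\emph{Openness.} Consider the map $\Phi(u)=\int_\Omega u^3\,dx$ on $H^1_0(\Omega)$. Since $1\le N<6$, we have $3<2^*$ (for $N\le 2$ any exponent is admissible). Hence the Sobolev embedding $H^1_0(\Omega)\hookrightarrow L^3(\Omega)$ is continuous. Using $|a^3-b^3|\le 3(|a|^2+|b|^2)|a-b|$ and H\"older,
\[
|\Phi(u)-\Phi(v)|\le 3\left(|u|_3^2+|v|_3^2\right)|u-v|_3\le C\left(\|u\|^2+\|v\|^2\right)\|u-v\|,
\]
so $\Phi$ is continuous in the $H^1_0$ norm. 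By the equivalence of $\|\cdot\|_a$ with the usual norm for the given range of $a$ and $\lambda$, $\Phi$ is also continuous for $\|\cdot\|_a$. Therefore $\mathcal{A}=\Phi^{-1}((-\infty,0))$ is open.

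\emph{Nonemptiness.} Take any $\phi\in C_c^\infty(\Omega)$ with $\phi\ge0$ and $\phi\not\equiv0$, and set $u=-\phi$. Then $\int_\Omega u^3\,dx=-\int_\Omega\phi^3\,dx<0$, so $u\in\mathcal{A}$.

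This also gives $\mathcal{S}_\mathcal{A}\neq\emptyset$: since $\|u\|_a>0$ by the norm equivalence, $u/\|u\|_a\in\mathcal{S}_\mathcal{A}$ by the cone property.

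The only point needing care is the openness step. It relies on the subcritical embedding into $L^3$, which is exactly where $N<6$ enters, and on the norm equivalence recalled from \cite{BPS}. The remaining steps are immediate.
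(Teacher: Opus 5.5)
Your proposal is correct and follows the paper's argument. The paper likewise writes $\mathcal{A}$ as the preimage of $(-\infty,0)$ under the continuous, cubically homogeneous map $u\mapsto\int_\Omega u^3\,dx$ and uses a nonzero nonpositive test function for nonemptiness; you simply spell out the continuity via the embedding $H^1_0(\Omega)\hookrightarrow L^3(\Omega)$, which is where $N<6$ enters and which the paper takes for granted.
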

\begin{proof}
The continuous cubic map $h(u)=\int_\Omega u^3dx$ satisfies $h(tu)=t^3h(u)$ for $t>0$. Hence $\mathcal A=h^{-1}(( -\infty,0))$ is an open cone; any nonzero nonpositive test function shows that it is nonempty.
\end{proof} 

\begin{proposition}                           \label{Prop 3.1}
Suppose that $a \in \left(0, (\lambda_2 - \lambda_1)/\vert \Omega\vert\right)$, $\lambda < \lambda_1(a)$ and $g$ satisfies $(g_1)-(g_2)$. Then $(A_1)-(A_3)$ hold.
\end{proposition}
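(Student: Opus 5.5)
The plan is to check the three hypotheses in turn on $X=H^1_0(\Omega)$ with the Hilbert norm $\|\cdot\|_a$ and the cone $\mathcal A$ from \eqref{eq 3.2}. Lemma \ref{Lem 3.2} already gives that $\mathcal A$ is an open cone.

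\textbf{Hypothesis $(A_1)$.} Take $\varphi(s)=s$, so that $\psi(u)=\frac12\|u\|_a^2$. Then $J(u)v=\langle u,v\rangle_a$, which is $C^1$ and bounded on bounded sets, and $J(w)w=1$ on $\mathcal S_a$.

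\textbf{Hypothesis $(A_2)$.} For $w\neq0$ and $s>0$ write
\[
\frac{\alpha_w'(s)}{s}=\|w\|_a^2+s\int_\Omega w^3\,dx-\int_{[w\neq0]}\frac{g(sw)}{sw}\,w^2\,dx=:h_w(s).
\]
By $(g_2)$, $g(t)/t=\operatorname{sgn}(t)\,g(t)/|t|$.
\begin{itemize}
\item For $t>0$ this quotient is nondecreasing in $t$.
\item For $t<0$ we have $g(t)/t=-g(t)/|t|$; as $s$ grows, $t=sw$ decreases, $|t|$ increases, so $g(t)/|t|$ increases and $-g(t)/|t|$ decreases.
\end{itemize}
This sign-dependent monotonicity is the delicate point. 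The robust facts are the limits: by $(g_1)$ the last integral tends to $0$ as $s\to0^+$ and to $g_\infty|w|_2^2$ as $s\to\infty$, by dominated convergence, since $|g(t)|\le C|t|$.

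\begin{itemize}
\item \textbf{Case $\int w^3\ge0$.} Then $h_w(s)\ge \|w\|_a^2-\sup_t\frac{g(t)}{t}\,|w|_2^2$. I would try to show $\sup_t g(t)/t\le g_\infty<\kappa$ and then use \eqref{shifted-spectral} to get $h_w>0$, so no critical point exists. This needs $g(t)/t\le g_\infty$ for all $t$, which follows from $(g_2)$ for $t>0$. For $t<0$ I expect $g(t)/|t|\le -g_\infty$, so $g(t)/t\le g_\infty$ again. If necessary I would read $(g_2)$ in the intended sense that $g(t)/t$ is increasing for $t>0$ and decreasing for $t<0$, which makes $g(t)/t\le g_\infty$ immediate.
\item \textbf{Case $w\in\mathcal A$.} The term $s\int w^3\to-\infty$ linearly, so $h_w(s)\to-\infty$, while $h_w(0^+)=\|w\|_a^2>0$. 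Uniqueness of the zero comes from strict decrease of $h_w$. The map $s\mapsto s\int w^3$ is strictly decreasing, and $-\int\frac{g(sw)}{sw}w^2$ is nonincreasing under the intended monotonicity, so $h_w$ is strictly decreasing.
\end{itemize}
Together these give $(A_2)$ with a unique $s_w$, and they rule out critical points of $\alpha_w$ outside $\mathcal A$.

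\textbf{Hypothesis $(A_3)$.} For the lower bound $\delta$: at $s=s_w$ with $\|w\|_a=1$,
\[
1=-s_w\int w^3+\int\frac{g(s_ww)}{s_ww}w^2\le C s_w+\int\frac{g(s_ww)}{s_ww}w^2 .
\]
Here the Sobolev embedding $H^1_0\hookrightarrow L^3$ (valid since $N<6$) bounds $|\int w^3|$. Using $(g_1)$ and $(g_2)$, $g(t)/t\le\varepsilon+C_\varepsilon|t|^{p-2}$ with $2<p<2^*$, so the right side is $\le Cs_w+\varepsilon C+C_\varepsilon s_w^{p-2}$. This forces $s_w\ge\delta>0$.

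For the upper bound on compact $\mathcal K\subset\mathcal S_{\mathcal A}$: $\int w^3\le -\beta<0$ on $\mathcal K$ by continuity and compactness. Since $g(t)/t\ge0$ near $0$ and bounded below by $(g_1)$, we have $g(t)/t\ge -C$ for all $t$. Then
\[
0=h_w(s_w)\le 1-s_w\beta+C\,|w|_2^2\le 1-s_w\beta+C/\kappa,
\]
giving $s_w\le C_{\mathcal K}$.

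\textbf{Main obstacle.} The main obstacle is the sign-dependent monotonicity of $g(t)/t$ coming from $(g_2)$ for negative arguments. It is needed both to exclude critical points when $\int w^3\ge0$ (through $g(t)/t\le g_\infty<\kappa$) and for the uniqueness of $s_w$.
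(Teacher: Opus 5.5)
Your overall plan matches the paper's proof. You take $\varphi(s)=s$ for $(A_1)$. On $\mathcal A$ you use the normalized derivative $h_w(s)=\alpha_w'(s)/s$, strictly decreasing from $\|w\|_a^2$ to $-\infty$. Off the cone you get $h_w\geq(1-g_\infty/\kappa)\|w\|_a^2$ from $\|w\|_a^2\geq\kappa|w|_2^2$. For the upper bound in $(A_3)$ you use a positive minimum of $-\int_\Omega w^3dx$ on compact sets. Two of your choices differ from the paper's, and both are harmless:
\begin{itemize}
\item you get the lower bound in $(A_3)$ directly from the fibering identity rather than from $I_a'(u)u\geq(1-\varepsilon/\kappa)\|u\|_a^2-C'_\varepsilon\|u\|_a^3$;
\item you get the upper bound using only $g(t)/t\geq-C$ rather than $g(t)t\geq0$.
\end{itemize}
Both only need $|g(t)|\leq C|t|$, which follows from continuity and $(g_1)$ alone.

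\textbf{The gap.} What is missing is the step you single out as the main obstacle, and it is an obstacle only because of sign errors. You never derive from $(g_2)$ that $s\mapsto g(sw)/(sw)$ is nondecreasing, or that $g(t)/t\leq g_\infty$. Your computation for $t<0$ points the wrong way, and you then replace $(g_2)$ by an ``intended'' reading.

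\emph{Monotonicity.} Take $w<0$ and increasing $s$. Then $t=sw$ decreases. Since $(g_2)$ says $g(t)/|t|$ is nondecreasing in $t$, it can only decrease, not increase. Hence $g(t)/t=-g(t)/|t|$ is nondecreasing in $s$. So $s\mapsto g(sw)/(sw)$ is nondecreasing for every real $w\neq0$, which is exactly what strict decrease of $h_w$ requires.

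\emph{Bounds.} On $(-\infty,0)$, the nondecreasing function $g(t)/|t|$ tends to $-g_\infty$ at $-\infty$ and to $0$ at $0^-$, by $(g_1)$. Hence $-g_\infty\leq g(t)/|t|\leq0$, that is, $0\leq g(t)/t\leq g_\infty$ for $t<0$. Your expected inequality $g(t)/|t|\leq-g_\infty$ is reversed and would instead give $g(t)/t\geq g_\infty$. Combined with the correct statement for $t>0$, this yields $0\leq g(t)t\leq g_\infty t^2$ on $\mathbb{R}$. The paper records this, together with the monotonicity in $s$, in one line before the fibering computation.

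So $(g_2)$ as stated already delivers your ``intended sense''. Replace the hedged paragraph with these two derivations and the proof is complete.
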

\begin{proof}
Since the shifted space is Hilbert, $(A_1)$ holds with $\varphi(t)=t$, $\psi(u)=\|u\|_a^2/2$ and $J(u)v=\langle u,v\rangle_a$. Hypotheses $(g_1)$--$(g_2)$ give
\[
0\leq g(t)t\leq g_\infty t^2,\qquad 0\leq G(t)\leq\tfrac12g(t)t,
\]
and $s\mapsto g(sw)/(sw)$ is nondecreasing for every nonzero real $w$. For $w\in\mathcal A$,
\begin{equation}\label{eq 3.3}
\alpha_w(s)=\frac{s^2}{2}\|w\|_a^2+\frac{s^3}{3}\int_\Omega w^3dx-\int_\Omega G(sw)dx.
\end{equation}
Its normalized derivative is
\begin{equation}\label{eq 3.5}
\frac{\alpha_w'(s)}s=\|w\|_a^2+s\int_\Omega w^3dx
-\int_{[w\neq0]}\frac{g(sw)}{sw}w^2dx.
\end{equation}
It is strictly decreasing, tends to $\|w\|_a^2>0$ at zero by dominated convergence, and tends to $-\infty$ at infinity. Its unique zero therefore has the signs required in $(A_2)$. If $w\in\mathcal A^c\setminus\{0\}$, the same formula and \eqref{shifted-spectral} give $\alpha_w'(s)/s\geq(1-g_\infty/\kappa)\|w\|_a^2>0$, so no positive critical point exists outside the cone.

For $(A_3)$, $(g_1)$ gives $|g(t)|\leq\varepsilon|t|+C_\varepsilon|t|^2$ for every $\varepsilon>0$. The $L^2$ and $L^3$ embeddings imply
\[
I_a'(u)u\geq(1-\varepsilon/\kappa)\|u\|_a^2-C_\varepsilon'\|u\|_a^3.
\]
Choosing $0<\varepsilon<\kappa$ gives a positive lower bound for $\|u\|_a$ on $\mathcal N_a$, hence for $s_w$ on $\mathcal S_{\mathcal A}$. At a fibering maximum, \eqref{eq 3.5} and $g(t)t\geq0$ give
\[
1=s_w\left(-\int_\Omega w^3dx\right)
+\int_{[w\neq0]}\frac{g(s_ww)}{s_ww}w^2dx
\geq s_w\left(-\int_\Omega w^3dx\right).
\]
For compact $\mathcal K\subset\mathcal S_{\mathcal A}$, the continuous positive function $-\int_\Omega w^3dx$ has a positive minimum on $\mathcal K$. This yields the upper bound for $s_w$ required in $(A_3)$.
\end{proof}

\begin{proposition}        \label{Prop 3.3}
Suppose that $a \in \left(0, (\lambda_2 - \lambda_1)/\vert \Omega\vert\right)$, $\lambda < \lambda_1(a)$ and $g$ satisfies $(g_1)-(g_2)$. Then, the functional $I_a$ is coercive on $\mathcal{N}_a$. 
\end{proposition}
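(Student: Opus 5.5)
The plan is to use the Nehari constraint to eliminate the cubic term from $I_a$. The remaining expression should then be bounded below by a positive multiple of $\|u\|_a^2$, using only the elementary inequalities on $g$ and $G$ already recorded in the proof of Proposition \ref{Prop 3.1}, together with the shifted spectral inequality \eqref{shifted-spectral}.

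First, for $u\in\mathcal N_a$ the defining identity gives
\[
\int_\Omega u^3\,dx=\int_\Omega g(u)u\,dx-\|u\|_a^2 .
\]
Substituting this into \eqref{eq 3.1}, written as $I_a(u)=\frac12\|u\|_a^2+\frac13\int_\Omega u^3dx-\int_\Omega G(u)dx$, yields
\[
I_a(u)=\frac16\|u\|_a^2+\int_\Omega\Big(\frac13 g(u)u-G(u)\Big)dx .
\]

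Second, I would invoke the pointwise bounds $0\le G(t)\le \frac12 g(t)t$ and $0\le g(t)t\le g_\infty t^2$. These were derived from $(g_1)$--$(g_2)$ in the proof of Proposition \ref{Prop 3.1}. They rest on the facts that $g(s)/|s|$ is nondecreasing and tends to $0$ at the origin, so $g(s)s\ge0$, and that $g(s)/|s|\to g_\infty$ as $|s|\to+\infty$ is approached monotonically, so $g(s)/|s|\le g_\infty$ for $s>0$ and $g(s)/|s|\ge -g_\infty$ for $s<0$. In both signs this gives $g(s)s\le g_\infty s^2$. The upper bound on $G$ follows by integrating $g(\tau)\le \frac{g(t)}{t}\tau$ on $[0,t]$ for $t>0$, and symmetrically for $t<0$. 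Consequently
\[
\frac13 g(t)t-G(t)\ge -\frac16 g(t)t\ge -\frac16 g_\infty t^2 .
\]

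Third, combining these bounds with \eqref{shifted-spectral} gives
\[
I_a(u)\ge \frac16\Big(\|u\|_a^2-g_\infty|u|_2^2\Big)\ge \frac16\Big(1-\frac{g_\infty}{\kappa}\Big)\|u\|_a^2 ,
\]
and the constant is positive because $g_\infty<\kappa$. Hence $I_a(u)\to+\infty$ as $\|u\|_a\to\infty$ with $u\in\mathcal N_a$, which is coercivity on $\mathcal N_a$ (the norms $\|\cdot\|_a$ and $\|\cdot\|_{H_0^1}$ being equivalent).

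The only delicate point is the bound $G\le\frac12 g(t)t$ for negative $t$. There one must check carefully that monotonicity of $g(s)/|s|$ on $(-\infty,0)$ gives the right inequality after integrating from $t$ to $0$. Everything else is direct substitution.
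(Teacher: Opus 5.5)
Your argument is correct and reaches the same constant $\theta/6$, $\theta=1-g_\infty/\kappa$, as the paper, but it eliminates a different term. The paper eliminates the quadratic term rather than the cubic one. On $\mathcal N_a$ it writes $I_a(u)=-\frac16\int_\Omega u^3dx+\int_\Omega[\frac12g(u)u-G(u)]dx$ and drops the nonnegative second integral. It then uses the Nehari identity, $g(t)t\le g_\infty t^2$ and \eqref{shifted-spectral} to get $-\int_\Omega u^3dx\ge\theta\|u\|_a^2$.

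Your inequality $I_a(v)-\frac13I_a'(v)v\ge\frac\theta6\|v\|_a^2$ holds for every $v\in H_0^1(\Omega)$, so the constraint enters only through $I_a'(u)u=0$. It is exactly the estimate the paper itself uses in Proposition \ref{Prop 3.4} to bound arbitrary Palais--Smale sequences, so your computation serves both purposes at once. The paper's route instead gives a by-product: the quantitative form $-\int_\Omega u^3dx\ge\theta\|u\|_a^2$ of the inclusion $\mathcal N_a\subset\mathcal A$. The same mechanism produces the bound $\Psi_a(w_n)\ge\theta s_n^2/6$ in Proposition \ref{Prop 3.2}.

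Two small points. First, $g(s)/|s|\to-g_\infty$ (not $g_\infty$) as $s\to-\infty$, although the bound $g(s)/|s|\ge-g_\infty$ on $(-\infty,0)$ that you actually use is right. Second, the negative-$t$ case you flag does work. For $t<\tau<0$, monotonicity gives $g(\tau)\ge g(t)|\tau|/|t|$, so $\int_t^0g\,d\tau\ge\frac12g(t)|t|$ and hence $G(t)=-\int_t^0g\,d\tau\le\frac12g(t)t$.
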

\begin{proof}
By $(g_1)$--$(g_2)$, $0\leq g(t)t\leq g_\infty t^2$ and $0\leq G(t)\leq g(t)t/2$. For $u\in\mathcal N_a$, the Nehari identity and \eqref{shifted-spectral} give
\[
-\int_\Omega u^3dx=\|u\|_a^2-\int_\Omega g(u)u\,dx
\geq(1-g_\infty/\kappa)\|u\|_a^2=:\theta\|u\|_a^2.
\]
Substitution in the energy yields
\begin{equation}\label{eq 3.15}
I_a(u)=-\frac16\int_\Omega u^3dx+\int_\Omega\left[\frac12g(u)u-G(u)\right]dx
\geq\frac\theta6\|u\|_a^2.
\end{equation}
Since $\theta>0$, this proves coercivity on $\mathcal N_a$ directly.
\end{proof}

Now we study the behavior of the energy functional associated with problem $(P_1)$, when the minimizing sequence approaches the boundary of the set $\mathcal{S}_{\mathcal{A}}$. 
\begin{proposition}                    \label{Prop 3.2}
Suppose that $a \in \left(0, (\lambda_2 - \lambda_1)/\vert \Omega\vert\right)$, $\lambda < \lambda_1(a)$ and $g$ satisfies $(g_1)-(g_2)$. If $\left\{w_n\right\} \subset \mathcal{S}_\mathcal{A}$ satisfies $\mathrm{dist}(w_n, \partial\mathcal{S}_\mathcal{A}) \rightarrow 0$, then $s_{w_n} \rightarrow +\infty$ and $\Psi_a(w_n)= I_a(s_nw_n) \rightarrow +\infty$ as $n \rightarrow +\infty$.
\end{proposition}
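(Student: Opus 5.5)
The plan is to show first that $\int_\Omega w_n^3\,dx\to 0$. Then the fibering identity from the proof of Proposition \ref{Prop 3.1} will force $s_{w_n}\to+\infty$, and the lower bound \eqref{eq 3.15} will then give $\Psi_a(w_n)\to+\infty$.

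\emph{Step 1 (cubic term vanishes).} Since $\mathrm{dist}(w_n,\partial\mathcal S_{\mathcal A})\to0$, choose $z_n\in\partial\mathcal S_{\mathcal A}$ with $\|w_n-z_n\|_a\to0$. Then $\int_\Omega z_n^3\,dx=0$. The map $h(u)=\int_\Omega u^3dx$ is uniformly continuous on bounded sets of $H_0^1(\Omega)$, because $N<6$ gives the embedding $H_0^1\hookrightarrow L^3$ and
\[
|h(u)-h(v)|\le C\big(|u|_3^2+|v|_3^2\big)|u-v|_3 .
\]
All the functions involved lie on the unit sphere $\mathcal S_a$, so $h(w_n)=h(w_n)-h(z_n)\to0$. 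Since $w_n\in\mathcal S_{\mathcal A}$, we get $-\int_\Omega w_n^3\,dx\to0^+$.

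\emph{Step 2 ($s_{w_n}\to\infty$).} Let $u_n=s_{w_n}w_n\in\mathcal N_a$, so $\|u_n\|_a=s_{w_n}$. The Nehari identity gives
\[
-\int_\Omega u_n^3\,dx=\|u_n\|_a^2-\int_\Omega g(u_n)u_n\,dx\ \ge\ \theta\|u_n\|_a^2,\qquad \theta=1-g_\infty/\kappa>0,
\]
which is exactly the estimate in Proposition \ref{Prop 3.3}. Dividing by $s_{w_n}^3$, this reads
\[
s_{w_n}\Big(-\int_\Omega w_n^3\,dx\Big)\ge\theta,\qquad\text{that is,}\qquad s_{w_n}\ge \frac{\theta}{-\int_\Omega w_n^3\,dx}.
\]
By Step 1 the right-hand side tends to $+\infty$. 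This also uses the bound $0\le g(t)t\le g_\infty t^2$ together with \eqref{shifted-spectral}, both already recorded.

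\emph{Step 3 (energy blows up).} By \eqref{eq 3.15},
\[
\Psi_a(w_n)=I_a(u_n)\ge\frac{\theta}{6}\|u_n\|_a^2=\frac{\theta}{6}s_{w_n}^2\to+\infty .
\]

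The only step with any real content is Step 1, namely the uniform continuity of the cubic functional near the boundary. It uses the restriction $N<6$ through the compactness-free Sobolev embedding into $L^3$ and the boundedness of the sphere. The rest is a direct reuse of Propositions \ref{Prop 3.1} and \ref{Prop 3.3}.
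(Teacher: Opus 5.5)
Your proof is correct. Steps 1 and 3 match the paper: the same H\"older/$L^3$ estimate against a nearby $z_n\in\partial\mathcal S_{\mathcal A}$, and the same bound $\Psi_a(w_n)\geq\frac{\theta}{6}s_{w_n}^2$. Step 2, however, takes a different and shorter route.

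The paper proves $s_{w_n}\to+\infty$ by contradiction. It extracts a weak limit $w$ and uses the compact embedding to get $\int_\Omega w^3dx=0$. It then passes to the limit in the Nehari identity by dominated convergence, reaching $s_0=\int_\Omega g(s_0w)w\,dx$, and excludes $w=0$ and $w\neq0$ separately using $g_\infty<\kappa$.

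You instead read off $s_{w_n}\bigl(-\int_\Omega w_n^3dx\bigr)\geq\theta$ directly from the Nehari identity and \eqref{shifted-spectral}. The paper writes down exactly this inequality but uses it only afterwards, for the energy estimate. With it in hand, the weak-limit argument is redundant.

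Your version needs no weak limits, no compactness (only the continuous embedding into $L^3$), and no subsequences. It is also quantitative: $s_{w_n}\geq\theta/\bigl|\int_\Omega w_n^3dx\bigr|$, hence $\Psi_a(w_n)\geq\theta^3/\bigl(6\bigl|\int_\Omega w_n^3dx\bigr|^2\bigr)$. Through your Lipschitz estimate, $\bigl|\int_\Omega w_n^3dx\bigr|$ is controlled by the distance to $\partial\mathcal S_{\mathcal A}$.

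What the paper's compactness-and-contradiction template buys is independence from such an algebraic inequality. That is why essentially the same argument serves for the asymptotically $p$-linear problem in Lemma~\ref{Prop 2.2}, where no analogous bound is available and the blow-up of $s_{w_n}$ is purely qualitative.
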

\begin{proof} 
Let $\left\{w_n\right\} \subset \mathcal{S}_\mathcal{A}$ be such that $\mathrm{dist}(w_n, \partial\mathcal{S}_\mathcal{A}) \rightarrow 0$. Since $\left\{w_n\right\}$ is bounded, there exists $w \in H^{1}_{0}(\Omega)$ such that, up to a subsequence, $w_n \rightharpoonup w$ in $H^{1}_{0}(\Omega)$. Once $\mathrm{\mathrm{dist}}(w_n, \partial\mathcal{S}_\mathcal{A})  \rightarrow 0$, there exists $\left\{z_n\right\} \subset \partial\mathcal{S}_\mathcal{A}$ such that 
\begin{equation}                            \label{eq 3.8}
    \left\|w_n - z_n\right\|_a \rightarrow 0 \ \ \mbox{as} \ n \rightarrow +\infty.
\end{equation}
By H\"older's inequality and the continuous embedding into $L^3$, the unit norms of $w_n,z_n$ give
\[
\begin{aligned}
\left|\int_\Omega w_n^3dx\right|
&=\left|\int_\Omega(w_n-z_n)(w_n^2+w_nz_n+z_n^2)dx\right|\\
&\leq |w_n-z_n|_3(|w_n|_3^2+|w_n|_3|z_n|_3+|z_n|_3^2)
\leq C\|w_n-z_n\|_a.
\end{aligned}
\]
Together with \eqref{eq 3.8}, this yields
\begin{equation}                                           \label{eq 3.9}
    \int_{\Omega} w_n^3 \ dx \rightarrow 0, \ \ \mbox{as} \ n \rightarrow +\infty.
\end{equation}
Using $(\ref{eq 3.9})$ and Sobolev compact embedding, we get
\begin{equation}                        \label{eq 3.10}
   \int_{\Omega} w^3 \ dx = 0, 
\end{equation}
The weak limit satisfies $\|w\|_a\leq1$; the integral identity alone does not imply that it belongs to the unit sphere.

\medskip

We claim that $s_n:=s_{w_n} \rightarrow +\infty$ as $n \rightarrow +\infty$. In fact, suppose by contradiction that, for some subsequence, $\left\{s_n\right\}$ is bounded, then there exists $s_0 > 0$ such that $s_n \rightarrow s_0$, as $n \rightarrow +\infty$. Once
\[s_n = s^2_n\left(-\int_{\Omega} w^{3}_{n} \ dx \right) + \int_{\Omega}g(s_n w_n)w_n \ dx,\]
by Lebesgue dominated convergence theorem, we have
\begin{equation*}                                           
s_0 = s_0^2 \left(-\int_{\Omega} w^3 \ dx \right) + \int_{\Omega} g(s_0 w)w \ dx.    
\end{equation*}
Thus, by (\ref{eq 3.10}), we obtain
\begin{equation}                        \label{eq 3.11}
s_0 = \int_{\Omega}g(s_0w)w \ dx.    
\end{equation}
Now, we have to consider two cases:\\

{\bf Case 1:} $w = 0$. 

Since $s_0>0$, the equation $(\ref{eq 3.11})$ leads us to a contradiction. 

{\bf Case 2:} $w \neq 0$.

In this case, we obtain from \eqref{eq 3.11} that
\[1 = \int_{[w \neq 0]}\frac{g(s_0 w)}{s_0 w}w^2 \ dx.\]
Now, $(g_1)$ and $(g_2)$ imply
\[1 \leq g_{\infty} \int_{\Omega}w^2 \ dx \leq \frac{g_{\infty}}{\kappa},\] 
contradicting the fact that $g_{\infty}<\kappa$. 

\medskip

Therefore, in any case 
\begin{equation}                          \label{eq 3.12}
    s_n \rightarrow +\infty, \ \ \ \mbox{as} \ \ n \rightarrow +\infty.
\end{equation}

Finally, we will show that $\Psi_a(w_n) \rightarrow +\infty$ as $n \rightarrow +\infty$. Note that
\begin{equation}                          \label{eq 3.13}
-s_n^3\int_{\Omega}w^{3}_{n} \ dx + \int_{\Omega}g(s_n w_n)s_nw_n \ dx - s_n^2=0.    
\end{equation}
 
Thus, by \eqref{eq 3.13}, it turns out that
\[
\Psi_a(w_n) = \frac{s_n^3}{6}\left(-\int_{\Omega}w_n^3 \ dx\right) + \int_{\Omega}\left[\frac{1}{2}g(s_n w_n)s_n w_n - G(s_n w_n) \ dx\right].
\]
The Nehari identity \eqref{eq 3.13} and \eqref{shifted-spectral} give
\[
-s_n\int_\Omega w_n^3dx
=1-\frac1{s_n^2}\int_\Omega g(s_nw_n)s_nw_n\,dx
\geq\theta>0.
\]
Consequently,
\[
\Psi_a(w_n)\geq\frac{\theta}{6}s_n^2\longrightarrow+\infty,
\]
by \eqref{eq 3.12}. This estimate uses $w_n$, not the weak limit whose cubic integral vanishes.
\end{proof}

\begin{remark}\label{H}
It is an immediate consequence of Proposition \ref{Prop 3.2} that $I_a$ verifies hypothesis $(H)$ in Theorem \ref{level}, with $\Gamma\equiv +\infty$. The nonempty boundary excludes the full-sphere setting; here the boundary barrier is infinite.
\end{remark}

\begin{proposition}              \label{Prop 3.4}
Suppose that $a \in \left(0, (\lambda_2 - \lambda_1)/\vert \Omega\vert\right)$, $\lambda < \lambda_1(a)$ and $g$ satisfies $(g_1)-(g_2)$. Then, the functional $I_a$ satisfies the $(PS)_{c_{\mathcal{N}_a}}$ condition, where $c_{\mathcal{N}_{a}} := \inf_{\mathcal{N}_a}I_a$.
\end{proposition}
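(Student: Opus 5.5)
We must show that every sequence $\{u_n\}\subset H^1_0(\Omega)$ with $I_a(u_n)\to c_{\mathcal N_a}$ and $I_a'(u_n)\to0$ has a convergent subsequence. In practice Theorem \ref{level} only needs the weaker $(PS)^{\mathcal N}_{c_{\mathcal N_a}}$ property, and the sequences produced there lie in $\mathcal N_a$. So I first treat sequences in $\mathcal N_a$, and then indicate how the general case follows. The argument has three steps: boundedness, weak limit plus compact embeddings, and upgrading to strong convergence via the Hilbert structure of $\|\cdot\|_a$.

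\emph{Boundedness.} If $\{u_n\}\subset\mathcal N_a$, Proposition \ref{Prop 3.3} gives $I_a(u_n)\geq\frac{\theta}{6}\|u_n\|_a^2$, so boundedness is immediate.

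For a general $(PS)$ sequence I would use the combination $I_a(u_n)-\frac12 I_a'(u_n)u_n$, which equals
\[
-\frac16\int_\Omega u_n^3\,dx+\int_\Omega\Bigl[\tfrac12 g(u_n)u_n-G(u_n)\Bigr]dx=c_{\mathcal N_a}+o(1)+o(1)\|u_n\|_a .
\]
The bracket is nonnegative, but this only bounds $-\int u_n^3$ from above. I would therefore argue by contradiction. Suppose $\|u_n\|_a\to\infty$ and set $v_n=u_n/\|u_n\|_a$. Dividing $I_a'(u_n)u_n=o(\|u_n\|_a)$ by $\|u_n\|_a^2$ gives
\[
1+\|u_n\|_a\int_\Omega v_n^3\,dx-\int_\Omega \frac{g(u_n)u_n}{\|u_n\|_a^2}\,dx=o(1).
\]
The last integral is at most $g_\infty/\kappa<1$, so $-\|u_n\|_a\int v_n^3\geq 1-g_\infty/\kappa-o(1)$. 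Inserting this into the identity above yields $\frac{\theta}{6}\|u_n\|_a^2\lesssim c+o(\|u_n\|_a)$, which is a contradiction. This step, controlling the cubic term, is the delicate one; on $\mathcal N_a$ it is already encoded in Proposition \ref{Prop 3.3}.

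\emph{Weak limit and strong convergence.} Since $N<6$, the embedding $H^1_0\hookrightarrow L^3$ is compact: $3<2^*$ exactly when $N<6$. Up to a subsequence, $u_n\rightharpoonup u$ in $H^1_0(\Omega)$ and $u_n\to u$ in $L^2$ and $L^3$. Write
\[
\|u_n-u\|_a^2=(I_a'(u_n)-I_a'(u))(u_n-u)+\int_\Omega(u^2-u_n^2)(u_n-u)\,dx+\int_\Omega(g(u_n)-g(u))(u_n-u)\,dx .
\]
The first term tends to zero: $I_a'(u_n)\to0$ and $u_n-u$ is bounded, while $I_a'(u)(u_n-u)\to0$ by weak convergence. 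For the cubic term, Hölder gives the bound $|u_n^2-u^2|_{3/2}\,|u_n-u|_3\to0$. For the $g$-term, $(g_1)$ gives $|g(t)|\leq C(|t|+t^2)$, and the same Hölder argument in $L^3$ (or in $L^2$ for the linear part) shows it tends to zero. Hence $u_n\to u$ in $\|\cdot\|_a$, which is equivalent to the $H^1_0$ norm. This proves $(PS)_{c_{\mathcal N_a}}$, and in particular the $(PS)^{\mathcal N}_{c_{\mathcal N_a}}$ condition required in Theorem \ref{level}.
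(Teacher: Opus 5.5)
Your proof is correct and follows the paper's route: you bound the sequence by cancelling the cubic term, then use the compact embeddings into $L^2\cap L^3$ and test the derivative against $u_n-u$. The only difference is cosmetic. Your combination of $I_a(u_n)-\frac12 I_a'(u_n)u_n$ with the relation coming from $I_a'(u_n)u_n$ is exactly the paper's identity $I_a(v)-\frac13 I_a'(v)v=\frac16\|v\|_a^2+\int_\Omega[\frac13 g(v)v-G(v)]\,dx\geq\frac{\theta}{6}\|v\|_a^2$, which gives boundedness directly, without the normalization and contradiction.
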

\begin{proof} Let $\left\{u_n\right\}$ be a $(PS)_{c_{\mathcal{N}_{a}}}$ sequence of $I_a$. The coercivity on $\mathcal N_a$ alone does not concern an arbitrary PS sequence. For every $v\in H_0^1(\Omega)$, however,
\[
I_a(v)-\frac13I_a'(v)v
=\frac16\|v\|_a^2+\int_\Omega\left[\frac13g(v)v-G(v)\right]dx
\geq\frac{\theta}{6}\|v\|_a^2.
\]
Thus $\theta\|u_n\|_a^2/6\leq C+o(1)\|u_n\|_a$, which bounds $\{u_n\}$ in $H_0^1(\Omega)$. After extraction, $u_n\rightharpoonup u$ in $H_0^1$ and $u_n\to u$ in $L^2\cap L^3$. The growth bound on $g$ and these compact convergences give $g(u_n)\to g(u)$ in $L^2$ and $u_n^2\to u^2$ in $L^{3/2}$. Testing the derivative against $u_n-u$ yields
\[
\|u_n-u\|_a^2
=I_a'(u_n)(u_n-u)-\int_\Omega u_n^2(u_n-u)dx
+\int_\Omega g(u_n)(u_n-u)dx+o(1)\longrightarrow0,
\]
where the $o(1)$ is the weak pairing with $u$. This proves strong convergence.
\end{proof}

The main result of this subsection is as follows.
\begin{theorem}   \label{dito}                             \label{Teo 3.1}
Suppose that $a \in \left(0, (\lambda_2 - \lambda_1)/\vert \Omega\vert\right)$, $\lambda < \lambda_1(a)$ and $g$ satisfies $(g_1)-(g_2)$. Then, problem \eqref{Prob P_2} admits a ground-state solution whose negative part is nontrivial: it changes sign or is nonpositive and nonzero.
\end{theorem}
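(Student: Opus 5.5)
The plan is to apply Theorem \ref{level} with $X=H_0^1(\Omega)$ endowed with the shifted norm $\|\cdot\|_a$, the functional $I=I_a$ and the open cone $\mathcal A$ from \eqref{eq 3.2}. First, Lemma \ref{Lem 3.2} shows that $\mathcal A$ is a nonempty open cone contained in $X\setminus\{0\}$. Proposition \ref{Prop 3.1} then supplies $(A_1)$--$(A_3)$, with $\varphi(t)=t$ and $J(u)=\langle u,\cdot\rangle_a$.

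Next comes the boundary hypothesis $(H)$. I take $\Gamma\equiv+\infty$ on $\partial\mathcal S_{\mathcal A}$; this constant map is trivially lower semicontinuous, and $c_{\mathcal N_a}<+\infty=\inf\Gamma$ because $\mathcal N_a\neq\emptyset$ by Lemma \ref{Lem 1.2}(c). Now suppose $\{u_n\}\subset\mathcal S_{\mathcal A}$ and $u_n\to u\in\partial\mathcal S_{\mathcal A}$ in norm. Then $\mathrm{dist}(u_n,\partial\mathcal S_{\mathcal A})\le\|u_n-u\|_a\to0$, so Proposition \ref{Prop 3.2} gives $I_a(m(u_n))=\Psi_a(u_n)\to+\infty$, which is exactly Remark \ref{H}. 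For compactness, Proposition \ref{Prop 3.4} gives the full $(PS)_{c_{\mathcal N_a}}$ condition, and this implies the $(PS)^{\mathcal N}_{c_{\mathcal N_a}}$ condition. Theorem \ref{level} therefore yields a nontrivial critical point $u$ with $I_a(u)=c_{\mathcal N_a}>0$. Every nonzero critical point lies in $\mathcal N_a$, so $u$ is a ground state. By standard arguments it is a weak solution of \eqref{Prob P_2}.

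It remains to get the sign information. Since $u\in\mathcal N_a\subset\mathcal A$ by Lemma \ref{Lem 1.2}(a), we have $\int_\Omega u^3\,dx<0$. If $u^-=0$, then $u\ge0$ a.e., so $\int_\Omega u^3\,dx\ge0$, a contradiction; hence $u^-\not\equiv0$. There are then two cases. If $u^+\not\equiv0$, the solution changes sign. Otherwise $u\le0$, and $u\neq0$. This is exactly the dichotomy claimed in the statement.

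I expect no real obstacle, since every ingredient is already established. The only point needing care is to check that $(H)$ is stated in terms of norm convergence to a boundary point, while Proposition \ref{Prop 3.2} is phrased through the distance to the boundary; the inequality $\mathrm{dist}(u_n,\partial\mathcal S_{\mathcal A})\le\|u_n-u\|_a$ bridges the two.
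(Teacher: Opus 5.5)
Your proposal is correct and follows the paper's own proof. You apply Theorem~\ref{level} using Lemma~\ref{Lem 3.2}, Proposition~\ref{Prop 3.1}, Remark~\ref{H} (via Proposition~\ref{Prop 3.2}) and Proposition~\ref{Prop 3.4}, and you read off the sign from $u\in\mathcal N_a\subset\mathcal A$, i.e.\ $\int_\Omega u^3\,dx<0$. Your bound $\mathrm{dist}(u_n,\partial\mathcal S_{\mathcal A})\le\|u_n-u\|_a$ makes explicit the step the paper leaves implicit when it deduces $(H)$ with $\Gamma\equiv+\infty$ from Proposition~\ref{Prop 3.2}.
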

\begin{proof} 
It follows from Lemma \ref{Lem 3.2}, Remark \ref{H}, Propositions \ref{Prop 3.1}, \ref{Prop 3.4} and Theorem \ref{level} that problem \eqref{Prob P_2} has a nontrivial ground state solution. Furthermore, since $u\in\mathcal{N}_a\subset \mathcal{A}$ (see Lemma \ref{Lem 1.2}) we conclude that 
\[\int_{\Omega}u^3 \ dx < 0,\]
and then $u$ has a nontrivial negative part. Thus it is nonpositive and nonzero or changes sign.       
                        
\end{proof}
\subsection{An integro-differential problem: Superquadratic growth in a subdomain}\label{32}

In this section, we are interested in studying the existence of a ground-state solution for the following problem
\begin{equation}	     \label{Prob P_3}\tag{$P_2$}
\left\{
\begin{array}{rcl}
-\Delta u + a \displaystyle\int_{\Omega} u \ dx &=& \lambda u + (1 - \chi_{\Omega_0}(x))g(u) + \chi_{\Omega_0}(x)f(u) \ \ \mbox{in} \ \Omega,\\[0.3cm]
u& =& 0 \ \  \mbox{on} \ \partial \Omega,
\end{array}
\right.
\end{equation}
where $\Omega \subset \R^N$ is a bounded domain with a smooth boundary, $a > 0$, $\lambda < \lambda_1(a)$, $\emptyset\neq\Omega_0 \subset\subset \Omega$ is open, and $f, g \in C(\R)$ with
$$
f(s)\leq c(1+s^{p-1}),
$$
for all $s>0$ and some constants $c>0$ and $2<p<2^{\ast}$, which satisfy:
\begin{itemize}
    \item[$(g_1)$] $g = 0 \ \mbox{in} \ (-\infty, 0], \  \displaystyle\lim_{s \rightarrow 0^+} g(s)/s = 0 \ \mbox{and} \ \lim_{s \rightarrow +\infty} g(s)/s = g_{\infty}\in[0,\kappa)$;

    \item[$(g_2)$]  $s \mapsto g(s)/s$ is a nondecreasing function in $(0, +\infty)$;

    \item[$(f_1)$] $f=0 \ \mbox{in} \ (-\infty, 0], \ \lim_{s \rightarrow 0^+} f(s)/s = f_0\in[0,\kappa) \ \mbox{and} \ \lim_{s \rightarrow +\infty} F(s)/s^2 = +\infty$, where $F(s):=\int^s_{0} f(\tau) \ d\tau$

    \item[$(f_2)$] $s \mapsto f(s)/ s$ is strictly increasing in $(0, +\infty)$;
\end{itemize}

Associated with problem \eqref{Prob P_3}, we have the energy functional $J_a: H^{1}_0(\Omega) \rightarrow \R$, given by 
\begin{equation}                     \label{eq 4.1}
J_a(u) = \frac{1}{2}\int_{\Omega}\left|\nabla u\right|^2 \ dx + \frac{a}{2} \left(\int_{\Omega} u \ dx\right)^2 - \frac{\lambda}{2}\int_{\Omega} u^2 \ dx - \int_{\Omega\backslash\Omega_{0}} G(u) \ dx - \int_{\Omega_0} F(u) \ dx,
\end{equation}
where $G(s) = \int^s_{0} g(\tau) \ d\tau$. It can be shown that $J_a \in C^1(H^{1}_{0}(\Omega), \R)$ with 
\[
\begin{aligned}
J_a'(u)v={}&\int_\Omega\nabla u\nabla v\,dx+a\int_\Omega u\,dx\int_\Omega v\,dx-\lambda\int_\Omega uv\,dx\\
&-\int_{\Omega\backslash\Omega_0}g(u)v\,dx-\int_{\Omega_0}f(u)v\,dx,
\qquad v\in H_0^1(\Omega).
\end{aligned}
\]

The Nehari set associated with $J_a$ is 
\[\mathcal{N}_a = \left\{u \in H^{1}_{0}(\Omega)\backslash\left\{0\right\}: \Vert u\Vert_a^2 = \int_{\Omega\backslash\Omega_{0}}g(u)u \ dx + \int_{\Omega_0}f(u)u \ dx\right\},\]
Under the same notation of section \ref{abstract setting}, by defining the set $\mathcal{A}$ as
\begin{equation}                      \label{eq 4.2}
\mathcal{A} = \left\{u \in H^{1}_{0}(\Omega): \vert supp (u^+) \cap\Omega_0\vert > 0\right\},
\end{equation}
we have
\[\mathcal{S}_\mathcal{A} = \left\{u \in \mathcal{S}: \vert supp (u^+) \cap\Omega_0\vert > 0\right\},\]
where $|A|$ denotes Lebesgue measure. In the formulas of this subsection, $supp(u^+)$ denotes the measurable set $[u>0]$, up to null sets, rather than its topological closure. Thus \eqref{eq 4.2} is equivalently $\mathcal A=\{u:\int_{\Omega_0}(u^+)^2dx>0\}$. 

\begin{lemma}                             \label{Lem 4.1}
Suppose that $a \in \left(0, (\lambda_2 - \lambda_1)/\vert \Omega\vert\right)$ and $\lambda < \lambda_1(a)$. Then, $\mathcal{A}$ is a nonempty open set and $s\mathcal{A}\subset \mathcal{A}$, for all $s > 0$.
\end{lemma}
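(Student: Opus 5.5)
Following the proof of Lemma \ref{Lem 3.2}, I will write $\mathcal A$ as the preimage of an open set under a continuous, positively homogeneous map. The paper gives the equivalent description
\[
\mathcal A=\Big\{u\in H_0^1(\Omega):\ \int_{\Omega_0}(u^+)^2\,dx>0\Big\}.
\]
So I consider the map $h:H_0^1(\Omega)\to\R$, $h(u)=\int_{\Omega_0}(u^+)^2dx$, and then $\mathcal A=h^{-1}((0,+\infty))$. The proof has three parts: continuity of $h$, the cone property, and nonemptiness.

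\textbf{Continuity of $h$.} The map $u\mapsto u^+$ is $1$-Lipschitz pointwise, since $|a^+-b^+|\le|a-b|$. Hence it is Lipschitz from $L^2(\Omega)$ to $L^2(\Omega)$. Because the norm $\|\cdot\|_a$ is equivalent to the usual norm of $H_0^1(\Omega)$, the embedding $H_0^1(\Omega)\hookrightarrow L^2(\Omega)$ is continuous. Finally, $v\mapsto\int_{\Omega_0}v^2dx$ is continuous on $L^2$. Composing these three maps shows that $h$ is continuous, so $\mathcal A$ is open.

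\textbf{Cone property.} For $s>0$ we have $(su)^+=su^+$, so $h(su)=s^2h(u)$. Thus $u\in\mathcal A$ implies $su\in\mathcal A$, i.e.\ $s\mathcal A\subset\mathcal A$. Since $h(0)=0$, we also get $0\notin\mathcal A$, as required in $(A_2)$.

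\textbf{Nonemptiness.} Since $\Omega_0$ is nonempty and open, it contains a ball $B$. Take any $\phi\in C_c^\infty(B)$ with $\phi\ge0$ and $\phi\not\equiv0$. Then $\phi\in H_0^1(\Omega)$ and $\phi^+=\phi>0$ on a set of positive measure in $\Omega_0$, so $h(\phi)>0$.

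There is no real obstacle here. The only point needing care is the identification of $\{|[u>0]\cap\Omega_0|>0\}$ with $\{h>0\}$. This holds because $(u^+)^2>0$ exactly on $[u>0]$, and the integral of a nonnegative function over $\Omega_0$ is positive if and only if the function is positive on a subset of $\Omega_0$ of positive measure.
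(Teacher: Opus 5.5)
Your proposal is correct and follows essentially the same route as the paper: you write $\mathcal A=h^{-1}((0,\infty))$ with $h(u)=\int_{\Omega_0}(u^+)^2\,dx$, get continuity from the Lipschitz positive-part map in $L^2$, get the cone property from $h(su)=s^2h(u)$, and get nonemptiness from a nonnegative test function supported in $\Omega_0$. Your closing remark identifying $\{u:|[u>0]\cap\Omega_0|>0\}$ with $\{h>0\}$ matches the reformulation the paper states just before the lemma.
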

\begin{proof}
The map $h(u)=\int_{\Omega_0}(u^+)^2dx$ is continuous, since the positive-part map is Lipschitz in $L^2$. It satisfies $h(tu)=t^2h(u)$ for $t>0$ and is positive on a nonzero nonnegative test function supported in $\Omega_0$. Thus $\mathcal A=h^{-1}((0,\infty))$ is a nonempty open cone.
\end{proof} 
\begin{proposition}                       \label{Prop 4.1}
Let $a \in (0, (\lambda_2 - \lambda_1)/\vert\Omega\vert)$, $\lambda < \lambda_1(a)$ and suppose that $(g_1)$-$(g_2)$, $(f_1)$-$(f_2)$ occur. Then, $(A_1)-(A_3)$ hold true.
\end{proposition}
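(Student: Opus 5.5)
The plan follows the proof of Proposition \ref{Prop 3.1}. Since $\|\cdot\|_a$ is a Hilbert norm, $(A_1)$ holds with $\varphi(t)=t$, $\psi(u)=\|u\|_a^2/2$ and $J(u)v=\langle u,v\rangle_a$. The hypotheses give the pointwise facts we need. By $(g_1)$--$(g_2)$ we have $0\le g(t)t\le g_\infty t^2$, with $g(t)=0$ for $t\le 0$ and $t\mapsto g(t)/t$ nondecreasing on $(0,\infty)$. By $(f_1)$--$(f_2)$ we have $f(t)\ge f_0 t\ge 0$ for $t>0$, and $t\mapsto f(t)/t$ is strictly increasing with $f(t)/t\to+\infty$. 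The last limit follows from $F(t)/t^2\to\infty$, because $F(t)\le tf(t)$ by monotonicity.

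For $w\ne0$ we write
\[
\frac{\alpha_w'(s)}{s}=\|w\|_a^2-\int_{(\Omega\setminus\Omega_0)\cap[w>0]}\frac{g(sw)}{sw}w^2dx-\int_{\Omega_0\cap[w>0]}\frac{f(sw)}{sw}w^2dx .
\]
This function is nonincreasing in $s$. When $|[w>0]\cap\Omega_0|>0$ it is strictly decreasing, by $(f_2)$.

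\textbf{Limit as $s\to0^+$.} By dominated convergence, using the growth bounds $|g(t)|\le C|t|$ and $f(t)\le C(|t|+|t|^{p-1})$, the limit is
\[
\|w\|_a^2-f_0\int_{\Omega_0}(w^+)^2dx\ \ge\ (1-f_0/\kappa)\|w\|_a^2>0,
\]
where the inequality uses \eqref{shifted-spectral}.

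\textbf{Limit as $s\to\infty$ for $w\in\mathcal A$.} By Fatou's lemma the $f$-integral tends to $+\infty$, since $f(sw)/(sw)\to\infty$ on a set of positive measure. So there is a unique sign-changing zero $s_w$, which gives $(A_2)$ on $\mathcal A$.

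\textbf{Behaviour for $w\notin\mathcal A$.} Here $w^+=0$ a.e. in $\Omega_0$, so only the $g$-term survives, and
\[
\frac{\alpha_w'(s)}{s}\ge(1-g_\infty/\kappa)\|w\|_a^2>0 .
\]
Hence there are no critical points, which completes $(A_2)$.

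\textbf{Lower bound in $(A_3)$.} By $(g_1)$, $(f_1)$ and the growth condition on $f$, for every $\varepsilon>0$ we have
\[
g(t)t,\ f(t)t\le (\max\{f_0,g_\infty\}+\varepsilon)t^2+C_\varepsilon|t|^p .
\]
With the $L^2$ and $L^p$ embeddings this gives
\[
J_a'(u)u\ge\left(1-\tfrac{\max\{f_0,g_\infty\}+\varepsilon}{\kappa}\right)\|u\|_a^2-C\|u\|_a^p .
\]
Therefore $\|u\|_a\ge\delta>0$ on $\mathcal N_a$, so $s_w\ge\delta$.

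\textbf{Upper bound on compacts (main obstacle).} Unlike the cubic case there is no explicit coercive term, so I would argue by contradiction. Suppose $w_n\in\mathcal K$ with $s_{w_n}\to\infty$. By compactness, up to a subsequence $w_n\to w\in\mathcal K\subset\mathcal S_{\mathcal A}$ in $H_0^1$, hence a.e. and in $L^2$. The Nehari identity divided by $s_n^2$ reads
\[
1=\int_{\Omega\setminus\Omega_0}\frac{g(s_nw_n)}{s_nw_n}w_n^2+\int_{\Omega_0}\frac{f(s_nw_n)}{s_nw_n}w_n^2\ \ge\ \int_{\Omega_0\cap[w>0]}\frac{f(s_nw_n)}{s_nw_n}w_n^2 .
\]
On $\Omega_0\cap[w>0]$, which has positive measure since $w\in\mathcal A$, we have $s_nw_n\to+\infty$ a.e., and the integrand is nonnegative. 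Fatou's lemma then makes the right-hand side tend to $+\infty$, a contradiction. So $s_w\le C_{\mathcal K}$ on $\mathcal K$. The points to verify carefully are that all integrands are nonnegative, which holds because $f=g=0$ on $(-\infty,0]$, and that the a.e. divergence is valid on the limiting positivity set.
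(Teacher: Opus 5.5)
Your proposal is correct and follows essentially the same route as the paper. It uses the same normalized fibering derivative, with its limit at $0$ obtained by dominated convergence and \eqref{shifted-spectral} and its limit at $\infty$ obtained by Fatou via $f(t)/t\to\infty$. It also uses the same estimate of $J_a'(u)u$ for the uniform bound $s_w\geq\delta$, and the same Fatou contradiction on $\Omega_0\cap[w>0]$ for the upper bound on compact sets; the only cosmetic difference is your common constant $\max\{f_0,g_\infty\}+\varepsilon$, which is harmless once $\varepsilon$ keeps it below $\kappa$.
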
                                           
\begin{proof}
Assumption $(A_1)$ follows with the same $\varphi$, $\psi$ and $J$ as in Proposition \ref{Prop 3.1}. For $w\in\mathcal A$, the normalized derivative of
\[
\alpha_w(s)=\frac{s^2}{2}\|w\|_a^2-\int_{\Omega\setminus\Omega_0}G(sw)dx-\int_{\Omega_0}F(sw)dx
\]
is
\[
\frac{\alpha_w'(s)}s=\|w\|_a^2
-\int_{(\Omega\setminus\Omega_0)\cap[w>0]}\frac{g(sw)}{sw}w^2dx
-\int_{\Omega_0\cap[w>0]}\frac{f(sw)}{sw}w^2dx.
\]
It is strictly decreasing by $(g_2)$--$(f_2)$ and $|\Omega_0\cap[w>0]|>0$. Its limit at zero, justified by the growth bounds and dominated convergence, is
\begin{equation}\label{eq 4.5}
\|w\|_a^2-f_0\int_{\Omega_0}(w^+)^2dx
\geq(1-f_0/\kappa)\|w\|_a^2>0.
\end{equation}
Since $f(t)/t\geq2F(t)/t^2\to\infty$ as $t\to\infty$, Fatou's lemma on $\Omega_0\cap[w>0]$ gives
\begin{equation}\label{eq 4.6}
\lim_{s\to\infty}\frac{\alpha_w'(s)}s=-\infty.
\end{equation}
Thus its unique zero has the signs in $(A_2)$. For $w\in\mathcal A^c\setminus\{0\}$, the $f$ integral vanishes and $\alpha_w'(s)/s\geq(1-g_\infty/\kappa)\|w\|_a^2>0$, which proves the second part of $(A_2)$.

For $(A_3)$, choose $\varepsilon>0$ with $f_0+\varepsilon<\kappa$. The growth assumptions give $f(t)\leq(f_0+\varepsilon)t+C_\varepsilon t^{p-1}$ for $t\geq0$ and $0\leq g(t)\leq g_\infty t$ there. Consequently,
\[
J_a'(u)u\geq\left[1-\frac{\max\{g_\infty,f_0+\varepsilon\}}\kappa\right]\|u\|_a^2
-C_\varepsilon'\|u\|_a^p.
\]
The positive quadratic coefficient and $p>2$ bound Nehari norms, hence $s_w$ on $\mathcal S_{\mathcal A}$, away from zero. If $w_n$ lies in a compact $\mathcal K\subset\mathcal S_{\mathcal A}$ and $s_n=s_{w_n}\to\infty$, extract $w_n\to w\in\mathcal K$ strongly and almost everywhere. The Nehari identity gives
\begin{equation}\label{eq 4.7}
1\geq\int_{\Omega_0\cap[w>0]}\frac{f(s_nw_n)}{s_nw_n}
\chi_{[w_n>0]}w_n^2dx.
\end{equation}
On this positive-measure set the integrand tends to $+\infty$, because $f(t)/t\to\infty$. Fatou's lemma contradicts \eqref{eq 4.7}, proving the compact-set upper bound.
\end{proof}

\begin{proposition}                  \label{Prop 4.3}
Let $a \in (0, (\lambda_2 - \lambda_1)/\vert\Omega\vert)$, $\lambda < \lambda_1(a)$ and suppose that $(g_1)$-$(g_2)$, $(f_1)$-$(f_2)$ occur. Then, $J_a$ is bounded from below and coercive in $\mathcal{N}_a$.
\end{proposition}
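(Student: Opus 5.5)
The plan mirrors Proposition \ref{Prop 3.3}: combine the Nehari identity with the monotonicity hypotheses $(g_2)$ and $(f_2)$. Those hypotheses make the ``$\frac12 h(t)t-H(t)$'' terms nonnegative. This gives boundedness from below immediately, but it does not give a quadratic lower bound, since there is no cubic term to exploit here. So coercivity needs a separate contradiction argument of Szulkin--Weth type, using the superquadratic growth of $F$ on $\Omega_0$.

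\emph{Step 1 (lower bound).} By $(g_2)$, $(f_2)$ and $g=f=0$ on $(-\infty,0]$, we have $\frac12 g(t)t-G(t)\geq0$ and $\frac12 f(t)t-F(t)\geq0$ for all $t$. For $u\in\mathcal N_a$, subtracting $\frac12 J_a'(u)u=0$ gives
\[
J_a(u)=\int_{\Omega\setminus\Omega_0}\Big[\tfrac12g(u)u-G(u)\Big]dx+\int_{\Omega_0}\Big[\tfrac12 f(u)u-F(u)\Big]dx\geq0 .
\]
Hence $J_a$ is bounded from below on $\mathcal N_a$. (Alternatively, Remark \ref{bounded} together with Proposition \ref{Prop 4.1} already gives this, since fibering maxima are positive.)

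\emph{Step 2 (coercivity by contradiction).} Suppose $u_n\in\mathcal N_a$ with $\|u_n\|_a\to\infty$ but $J_a(u_n)\leq C$. Set $v_n=u_n/\|u_n\|_a$. Up to a subsequence, $v_n\rightharpoonup v$ in $H_0^1$ and $v_n\to v$ in $L^2$ and almost everywhere. Since $u_n=s_{v_n}v_n$ is the global maximum of its fibering map (by $(A_2)$ and Lemma \ref{Lem 1.2}), for every fixed $R>0$ we get
\[
C\geq J_a(u_n)\geq J_a(Rv_n)=\tfrac{R^2}2-\int_{\Omega\setminus\Omega_0}G(Rv_n)\,dx-\int_{\Omega_0}F(Rv_n)\,dx .
\]
\textbf{Case $v^+=0$ on $\Omega_0$.} Here $\int_{\Omega_0}F(Rv_n)\,dx\to0$. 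This follows from the growth bounds $F(t)\leq C(t^2+t^p)$ for $t\ge0$, the compact embedding for $p<2^*$, and $v_n^+\to0$ in $L^p(\Omega_0)$. Also $G(t)\leq \frac{g_\infty}2 t^2$, and the spectral bound \eqref{shifted-spectral} gives $\int G(Rv_n)\,dx\leq \frac{g_\infty}{2\kappa}R^2$. Therefore $C\geq \frac{R^2}2(1-g_\infty/\kappa)-o(1)$. Letting $R$ be large contradicts $g_\infty<\kappa$.

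\textbf{Case $|\Omega_0\cap[v>0]|>0$.} Divide the identity $J_a(u_n)=\frac12\|u_n\|_a^2-\int G(u_n)-\int_{\Omega_0}F(u_n)$ by $\|u_n\|_a^2$. We have $G(u_n)/\|u_n\|_a^2\leq \frac{g_\infty}{2} v_n^2$, which is bounded in $L^1$. On the other hand, $F(u_n)/\|u_n\|_a^2=\big[F(u_n)/u_n^2\big]v_n^2\to+\infty$ on $\Omega_0\cap[v>0]$. Since $F\geq0$, Fatou's lemma gives $0\geq\limsup J_a(u_n)/\|u_n\|_a^2=-\infty$, which is a contradiction. (Alternatively, one could observe that $J_a(u_n)\geq0$ is bounded, but the Fatou bound suffices.)

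The main obstacle is the first case. It needs the strong $L^p$ convergence of $v_n^+$ on $\Omega_0$ to kill the $F$-term at a fixed $R$, and it needs the inequality $G(t)\le \frac{g_\infty}2t^2$ uniformly for $t\ge0$. The latter follows from $(g_2)$, because $g(t)/t\nearrow g_\infty$.
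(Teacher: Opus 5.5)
Your proof is correct and follows the paper's argument. The ingredients are the same: $J_a\ge 0$ on $\mathcal N_a$ from $(g_2)$ and $(f_2)$; a Fatou argument ruling out $|\Omega_0\cap[v>0]|>0$; and, when $v^+=0$ on $\Omega_0$, the comparison $J_a(u_n)\ge J_a(Rv_n)$ combined with $G(t)\le g_\infty t^2/2$ and \eqref{shifted-spectral}. The paper argues directly rather than by contradiction, which is only cosmetic.

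One line in your second case needs correcting. The statement $0\geq\limsup_n J_a(u_n)/\|u_n\|_a^2=-\infty$ is not a contradiction, and the Fatou bound alone does not suffice. The assumed upper bound $J_a(u_n)\le C$ is perfectly compatible with $J_a(u_n)\to-\infty$. The contradiction must come from the lower bound $J_a(u_n)\ge 0$ of your Step 1, that is, $0\le\liminf_n J_a(u_n)/\|u_n\|_a^2$. This is exactly how the paper concludes that case.
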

\begin{proof}
Let $u \in \mathcal{N}_a$, then
\[
\Vert u\Vert^{2}_{a} = \int_{\Omega\backslash\Omega_{0}}g(u)u \ dx + \int_{\Omega_{0}}f(u)u \ dx.
\]
Now, hypotheses $(g_1)-(g_2)$ imply
\[J_a(u) = \int_{\Omega\backslash\Omega_0}\left[\frac{1}{2}g(u)u - G(u)\right] \ dx + \int_{\Omega_0}\left[\frac{1}{2}f(u)u - F(u)\right] \ dx \geq \int_{\Omega_0}\left[\frac{1}{2}f(u^+)u^+ - F(u^+)\right] \ dx. \]
From Lemma \ref{Lem 1.2} and Proposition \ref{Prop 4.1}, we have $\mathcal{N}_a \subset\mathcal{A}$ and, consequently, by $(A_2)$, $J_a(u) > 0$,  for all  $u \in \mathcal{N}_a$.

\medskip

To prove that $J_a$ is coercive on $\mathcal{N}_a$, let $\left\{u_n\right\}\subset \mathcal{N}_a$ be such that $\Vert u_n\Vert_a \rightarrow +\infty$, as $n \rightarrow +\infty$. Define $w_n := u_n/\Vert u_n\Vert_a$. Then 
$\left\{w_n\right\} \subset \mathcal{S}_{\mathcal{A}}$ and $s_{w_n} = \Vert u_n\Vert_a \rightarrow +\infty$. Let $w_n \rightharpoonup w$ in $H^{1}_{0}(\Omega)$. We are going to prove that $w^+ = 0$ in $\Omega_0$. In fact, suppose by contradiction that $w^+ \neq 0$ in $\Omega_0$. Then,
\[
0 \leq \frac{J_a(u_n)}{\Vert u_n\Vert^2_a} \leq \frac{1}{2} - \int_{\Omega_0\cap [w_n>0]}\frac{F(s_{w_n}w_n)}{(s_{w_n}w_n)^2}w_n^2 \ dx,
\]
After a subsequence, $w_n\to w$ almost everywhere. On $E=\Omega_0\cap[w>0]$, of positive measure, eventually $w_n>0$, $s_{w_n}w_n\to+\infty$, and $w_n^2\to w^2>0$. The nonnegative integrand in the last expression therefore tends to infinity on $E$. Fatou's lemma makes its integral diverge, contradicting the nonnegative left side. Therefore, $w^+ = 0$ in $\Omega_0$. Thus,
\[
J_a(u_n) = J_a(s_{w_n}w_n) \geq J_a(s w_n) = \frac{s^2}{2} - \int_{\Omega\backslash\Omega_0}G(sw_n) \ dx - \int_{\Omega_0}F(sw_n) \ dx.
\]
For each fixed $s>0$, compact convergence in $L^2$ and $L^p$ and the growth bounds yield convergence of the integrals of $G(sw_n)$ and $F(sw_n)$. Since $w^+=0$ on $\Omega_0$, the latter limit is zero. Consequently, it follows from $(g_1)$ that
\[
\liminf_{n \rightarrow +\infty} J_a(u_n) \geq \frac{s^2}{2} - \int_{\Omega\backslash\Omega_{0}}G(sw^+) \ dx \geq \frac{1}{2}\left(1 - \frac{g_{\infty}}{\kappa}\right)s^2,
\]
for all $s>0$.  Therefore $J_a(u_n) \rightarrow +\infty$, since  $0\leq g_{\infty} < \kappa$. 
                
\end{proof}

\begin{proposition}                \label{Porp 4.2}
Let $a \in (0, (\lambda_2 - \lambda_1)/\vert\Omega\vert)$, $\lambda < \lambda_1(a)$ and suppose that $(g_1)$-$(g_2)$, $(f_1)$-$(f_2)$ occur. If $\left\{w_n\right\} \subset \mathcal{S}_\mathcal{A}$ is a sequence satisfying $\mathrm{dist}(w_n, \partial\mathcal{S}_\mathcal{A}) \rightarrow 0$, then $s_{w_n} \rightarrow +\infty$ and $\Psi(w_n) \rightarrow +\infty$, as $n \rightarrow +\infty$.
\end{proposition}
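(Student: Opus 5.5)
The plan follows the proof of Proposition \ref{Prop 3.2}, using the boundary functional $h(u)=\int_{\Omega_0}(u^+)^2dx$ from Lemma \ref{Lem 4.1}. Here
\[
\partial\mathcal S_{\mathcal A}=\left\{u\in\mathcal S:\ h(u)=0\right\}.
\]

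\textbf{Step 1: the boundary functional vanishes along the sequence.} Take $z_n\in\partial\mathcal S_{\mathcal A}$ with $\|w_n-z_n\|_a\to0$. Since $u\mapsto u^+$ is $1$-Lipschitz in $L^2$, we get
\[
|w_n^+|_{L^2(\Omega_0)}\le |z_n^+|_{L^2(\Omega_0)}+C\|w_n-z_n\|_a .
\]
The first term on the right is zero, so $h(w_n)\to0$. Passing to a subsequence, $w_n\rightharpoonup w$ in $H_0^1$, $w_n\to w$ in $L^2\cap L^p$ and almost everywhere. Then $h(w)=0$, that is, $w^+=0$ a.e. in $\Omega_0$.

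\textbf{Step 2: $s_n:=s_{w_n}\to+\infty$.} Suppose instead that, along a subsequence, $s_n\to s_0$. Step 1 of Proposition \ref{Prop 4.1} gives $s_n\ge\delta$, so $s_0\ge\delta>0$. Divide the Nehari identity by $s_n^2$:
\[
1=\int_{\Omega\setminus\Omega_0}\frac{g(s_nw_n)}{s_n}w_n\,dx+\int_{\Omega_0}\frac{f(s_nw_n)}{s_n}w_n\,dx .
\]
The growth bounds $0\le g(t)\le g_\infty t$ and $f(t)\le (f_0+\varepsilon)t+C_\varepsilon t^{p-1}$ for $t\ge0$, together with $f=g=0$ on $(-\infty,0]$, give domination. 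The strong $L^2\cap L^p$ convergence then lets us pass to the limit (generalized dominated convergence). On $\Omega_0$ the $f$-term converges to $\int_{\Omega_0}f(s_0w)w/s_0=0$, because $w^+=0$ there. This leaves
\[
1=\int_{(\Omega\setminus\Omega_0)\cap[w>0]}\frac{g(s_0w)}{s_0w}w^2\,dx\le g_\infty|w|_2^2\le \frac{g_\infty}{\kappa}\|w\|_a^2\le\frac{g_\infty}{\kappa}<1 .
\]
This is a contradiction (it also covers $w=0$). Hence $s_n\to\infty$ along the full sequence.

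\textbf{Step 3: $\Psi(w_n)\to+\infty$.} The naive route is to mimic Proposition \ref{Prop 3.2} and write
\[
\Psi(w_n)=\int\left[\tfrac12 g(u_n)u_n-G(u_n)\right]+\int_{\Omega_0}\left[\tfrac12 f(u_n)u_n-F(u_n)\right],\qquad u_n=s_nw_n .
\]
This gives no explicit quadratic lower bound, so I will avoid it. Instead I will use the maximality argument from Proposition \ref{Prop 4.3}. For any fixed $s>0$,
\[
\Psi(w_n)=J_a(s_nw_n)\ge J_a(sw_n)=\frac{s^2}{2}-\int_{\Omega\setminus\Omega_0}G(sw_n)\,dx-\int_{\Omega_0}F(sw_n)\,dx .
\]
By the compact embeddings and the growth bounds, the right side converges to $\frac{s^2}{2}-\int_{\Omega\setminus\Omega_0}G(sw)\,dx-\int_{\Omega_0}F(sw)\,dx$. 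The $F$-integral vanishes because $F=0$ on $(-\infty,0]$ and $w\le0$ in $\Omega_0$. Also $G(t)\le g_\infty t^2/2$ and $|w|_2^2\le \|w\|_a^2/\kappa\le 1/\kappa$. Therefore
\[
\liminf_{n\to\infty}\Psi(w_n)\ge\frac{s^2}{2}\left(1-\frac{g_\infty}{\kappa}\right).
\]
Letting $s\to\infty$ gives $\Psi(w_n)\to+\infty$.

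The main obstacle is passing to the limit in the $f$-terms in Step 2 and Step 3. The domination must come from the growth bounds combined with strong $L^p$ convergence, and the limit must vanish using only $w^+=0$ on $\Omega_0$. Since every subsequence has a further subsequence with these properties, both conclusions hold for the full sequence.
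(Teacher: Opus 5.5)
Your argument is correct, but it is organized differently from the paper's.

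The paper never extracts a weak limit. With $d_n=\mathrm{dist}(w_n,\partial\mathcal S_{\mathcal A})$, it uses your Step~1 estimate in quantitative form, $\int_{\Omega_0}(w_n^+)^r\,dx\le C_r d_n^r$ for $r=2,p$. It plugs this directly into the maximality inequality, so that for every fixed $s>0$
\[
\Psi(w_n)\ge J_a(sw_n)\ge\Bigl(1-\frac{g_\infty}{\kappa}\Bigr)\frac{s^2}{2}-C\bigl(s^2d_n^2+s^pd_n^p\bigr).
\]
This gives $\Psi(w_n)\to+\infty$ for the whole sequence at once. It then gets $s_{w_n}\to+\infty$ for free from the elementary bound $\Psi(w_n)=J_a(s_{w_n}w_n)\le\frac12 s_{w_n}^2$, which holds because $F,G\ge0$ and $\|w_n\|_a=1$.

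Your Step~2 is therefore valid but redundant. It is the contradiction argument through the Nehari identity, modeled on Proposition~\ref{Prop 3.2}. If you do Step~3 first, Step~2 disappears, and with it the use of the lower bound in $(A_3)$, the generalized dominated convergence, and the subsequence-of-subsequences bookkeeping.

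What your route buys is a proof of $s_{w_n}\to\infty$ that does not pass through the energy. That would matter in a setting where no upper bound like $\Psi\le\frac12 s_w^2$ is available. The paper's route buys brevity and an explicit dependence on $d_n$.

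A minor point: your displayed identity $\partial\mathcal S_{\mathcal A}=\{u\in\mathcal S: h(u)=0\}$ is in general only an inclusion. For $N=1$, a unit vector that is strictly negative on $\overline{\Omega_0}$ has a whole neighbourhood on which $h=0$, so it does not lie in the closure of $\mathcal S_{\mathcal A}$. You only use $h(z_n)=0$ for $z_n\in\partial\mathcal S_{\mathcal A}$, which is true, so the proof is unaffected.
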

\begin{proof}
Write $d_n=\mathrm{dist}(w_n,\partial\mathcal S_{\mathcal A})$. Since $v^+=0$ on $\Omega_0$ for each boundary direction $v$, the Sobolev embedding gives
\begin{equation}\label{eq 4.8}
\int_{\Omega_0}(w_n^+)^r dx\leq C_r d_n^r,
\qquad 1\leq r<2^*,\quad r\text{ finite}.
\end{equation}
The growth bounds $0\leq F(t)\leq C(t^2+t^p)$ and $0\leq G(t)\leq g_\infty t^2/2$, $t\geq0$, consequently give, for each fixed $s>0$,
\[
\Psi(w_n)\geq J_a(sw_n)\geq(1-g_\infty/\kappa)\frac{s^2}{2}
-C(s^2d_n^2+s^pd_n^p).
\]
Passing to the lower limit as $n\to\infty$, we obtain 
\[
\liminf_{n \rightarrow +\infty}\Psi(w_n) \geq \left(1 - \frac{g_\infty}{\kappa}\right)\frac{s^2}{2}, \ \ \ \forall \ s > 0.
\]
Proving that $\Psi(w_n) \rightarrow +\infty$. Once $\Psi(w_n)\leq \frac{1}{2}s_{w_n}^2$, for all $n \in \mathbb{N}$, we conclude that $s_{w_n} \rightarrow +\infty$.
        
\end{proof}

\begin{remark}\label{disso}
As in the previous subsection, it is a consequence of Proposition \ref{Porp 4.2} that $J_a$ satisfies assumption $(H)$ in Theorem \ref{level}, with $\Gamma\equiv\infty$. Again, a proper spherical cone is involved, with an infinite boundary barrier.
\end{remark}

\medskip

As in the previous section we will prove compactness for ambient Palais--Smale sequences contained in $\mathcal N_a$ at the least energy level
\[
d_{\mathcal{N}_a} := \inf_{u \in \mathcal{S}_{\mathcal{A}}} \Psi_a(u)= \inf_{u \in \mathcal{N}_a}J_a(u).
\]
\begin{proposition}            \label{Prop 4.4}
Let $a \in (0, (\lambda_2 - \lambda_1)/\vert\Omega\vert)$, $\lambda < \lambda_1(a)$ and suppose $(g_1)-(g_2)$ and $(f_1)-(f_2)$ occur. Then, the functional $J_a$ satisfies the $(PS)^{\mathcal N_a}_{d_{\mathcal N_a}}$ condition.

\end{proposition}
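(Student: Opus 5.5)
The sequence lies on $\mathcal N_a$, so Proposition \ref{Prop 4.3} gives boundedness; the remaining work is to pass to strong convergence using the asymmetric growth of $f$ and $g$. Let $\{u_n\}\subset\mathcal N_a$ satisfy $J_a(u_n)\to d_{\mathcal N_a}$ and $\|J_a'(u_n)\|\to0$.

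\textbf{Step 1: boundedness.} Since $J_a$ is coercive on $\mathcal N_a$ by Proposition \ref{Prop 4.3}, and $J_a(u_n)$ is bounded, the sequence $\{u_n\}$ is bounded in $H_0^1(\Omega)$. This is exactly why we work with the restricted condition $(PS)^{\mathcal N_a}$ rather than with arbitrary Palais--Smale sequences. Without an Ambrosetti--Rabinowitz type inequality for $f$, a general PS sequence need not be bounded, whereas on $\mathcal N_a$ coercivity gives boundedness for free.

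\textbf{Step 2: compact limits.} After extraction, $u_n\rightharpoonup u$ in $H_0^1(\Omega)$, and $u_n\to u$ strongly in $L^2(\Omega)$, in $L^p(\Omega)$ (since $2<p<2^\ast$), and almost everywhere.

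\textbf{Step 3: the nonlinear terms.} We need bounds valid on all of $\mathbb R$:
\[
0\le g(t)\le C|t| \quad\text{and}\quad |f(t)|\le C(|t|+|t|^{p-1}),
\]
using $f=g=0$ on $(-\infty,0]$. For $g$, the bound comes from $(g_1)$--$(g_2)$ together with continuity. For $f$, it comes from the given one-sided growth bound combined with $f(s)/s\to f_0$ near $0$, which removes the constant term near the origin.

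With these bounds, the Nemytskii maps converge:
\[
g(u_n)\to g(u)\ \text{in } L^2(\Omega\setminus\Omega_0), \qquad f(u_n)\to f(u)\ \text{in } L^{p/(p-1)}(\Omega_0).
\]
One can use either Krasnosel'skii's theorem or a Vitali argument for this. Hölder's inequality then gives
\[
\int_{\Omega\setminus\Omega_0} g(u_n)(u_n-u)\,dx\to0 \quad\text{and}\quad \int_{\Omega_0} f(u_n)(u_n-u)\,dx\to0 .
\]

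\textbf{Step 4: strong convergence.} Test the derivative against $u_n-u$:
\[
\|u_n-u\|_a^2 = J_a'(u_n)(u_n-u) - \langle u,u_n-u\rangle_a + \int_{\Omega\setminus\Omega_0} g(u_n)(u_n-u)\,dx + \int_{\Omega_0} f(u_n)(u_n-u)\,dx .
\]
The first term is bounded by $\|J_a'(u_n)\|\,\|u_n-u\|_a\to0$, since $\{u_n\}$ is bounded. The second term tends to zero by weak convergence. The last two tend to zero by Step 3. Hence $u_n\to u$ in $H_0^1(\Omega)$, which is the $(PS)^{\mathcal N_a}_{d_{\mathcal N_a}}$ condition.

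\textbf{Main obstacle.} The main obstacle is really only Step 1. It is settled by invoking Proposition \ref{Prop 4.3}, and the point to stress is that this is legitimate only because the sequence lies in $\mathcal N_a$. Beyond that, the one delicate detail is verifying the global subcritical growth bound for $f$ in Step 3, which is needed for the Nemytskii continuity.
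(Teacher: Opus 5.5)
Your proposal is correct and follows essentially the same route as the paper. Both arguments get boundedness from the coercivity of $J_a$ on $\mathcal N_a$ (Proposition \ref{Prop 4.3}), use the compact embeddings to obtain $g(u_n)\to g(u)$ in $L^2$ and $f(u_n)\to f(u)$ in $L^{p/(p-1)}$, and test $J_a'(u_n)$ against $u_n-u$. Your explicit derivation of the global bounds $0\le g(t)\le C|t|$ and $|f(t)|\le C(|t|+|t|^{p-1})$ just spells out what the paper summarizes as ``the growth assumptions.''
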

\begin{proof}
Let $u_n\in\mathcal N_a$ be an ambient PS sequence at $d_{\mathcal N_a}$. Proposition~\ref{Prop 4.3} bounds it in $H_0^1$. Extract $u_n\rightharpoonup u$ there and $u_n\to u$ in $L^2\cap L^p$. The growth assumptions give $g(u_n)\to g(u)$ in $L^2$ and $f(u_n)\to f(u)$ in $L^{p/(p-1)}$. Thus
\[
\|u_n-u\|_a^2=J_a'(u_n)(u_n-u)
+\int_{\Omega\setminus\Omega_0}g(u_n)(u_n-u)dx
+\int_{\Omega_0}f(u_n)(u_n-u)dx+o(1)\longrightarrow0.
\]
The weak pairing with $u$ gives the $o(1)$ term, proving strong convergence.
\end{proof}                 
The main result of this subsection is the following.
\begin{theorem}             \label{Teo 4.1}
   Let $a \in (0, (\lambda_2 - \lambda_1)/\vert\Omega\vert)$, $\lambda < \lambda_1(a)$ and suppose $(g_1)-(g_2)$ and $(f_1)-(f_2)$ occur. Then, problem \eqref{Prob P_3} admits a non-trivial ground-state solution, whose positive part is nontrivial in $\Omega_0$.
\end{theorem}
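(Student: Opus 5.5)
The plan mirrors the proof of Theorem \ref{dito}: verify the hypotheses of Theorem \ref{level} for $J_a$ on $H^1_0(\Omega)$ with the norm $\|\cdot\|_a$ and the cone $\mathcal{A}$ from \eqref{eq 4.2}, then read off the sign information from $\mathcal{N}_a\subset\mathcal{A}$.

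First, Lemma \ref{Lem 4.1} shows that $\mathcal{A}$ is a nonempty open cone, so $\mathcal{S}_{\mathcal{A}}\neq\emptyset$. Proposition \ref{Prop 4.1} gives $(A_1)$--$(A_3)$, the space being Hilbert with $\varphi(t)=t$. By Lemma \ref{Lem 1.2}, $\mathcal{N}_a\subset\mathcal{A}$, and by Proposition \ref{homeo}, $m:\mathcal{S}_{\mathcal{A}}\to\mathcal{N}_a$ is a homeomorphism. Next, Remark \ref{disso}, which rests on Proposition \ref{Porp 4.2}, yields $(H)$ with $\Gamma\equiv+\infty$.

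One point in this step needs checking. Theorem \ref{level} requires $\liminf I(m(u_n))\geq\Gamma(u)$ whenever $u_n\to u\in\partial\mathcal{S}_{\mathcal{A}}$ strongly. Strong convergence to a boundary point forces $\mathrm{dist}(u_n,\partial\mathcal{S}_{\mathcal{A}})\to0$, so Proposition \ref{Porp 4.2} applies and gives $\Psi(u_n)\to+\infty$. The constant map $+\infty$ is trivially lower semicontinuous. Moreover, $d_{\mathcal{N}_a}<+\infty=\inf\Gamma$, because $\mathcal{N}_a\neq\emptyset$ and $J_a$ is finite on it.

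Compactness comes from Proposition \ref{Prop 4.4}, which is exactly the $(PS)^{\mathcal{N}_a}_{d_{\mathcal{N}_a}}$ condition required in Theorem \ref{level}. Its proof uses the coercivity of $J_a$ on $\mathcal{N}_a$ from Proposition \ref{Prop 4.3} to bound the sequence. Theorem \ref{level} then produces a nontrivial critical point $u$ of $J_a$ with $J_a(u)=d_{\mathcal{N}_a}>0$. This $u$ is a weak solution of \eqref{Prob P_3}. It is a ground state because every nontrivial critical point lies in $\mathcal{N}_a$.

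For the qualitative statement, note that $u\in\mathcal{N}_a\subset\mathcal{A}$ by Lemma \ref{Lem 1.2}(a). Hence $|[u>0]\cap\Omega_0|>0$, which means $u^+\not\equiv0$ in $\Omega_0$.

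I expect no real obstacle. The only delicate point is the verification of $(H)$ described above, namely that strong convergence of $u_n\in\mathcal{S}_{\mathcal{A}}$ to a point of $\partial\mathcal{S}_{\mathcal{A}}$ triggers the distance hypothesis of Proposition \ref{Porp 4.2}.
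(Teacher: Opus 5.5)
Your proposal is correct and follows the paper's own argument: the paper proves this theorem simply by combining Remark~\ref{disso} and Propositions~\ref{Prop 4.1}, \ref{Prop 4.3}, \ref{Prop 4.4} with Theorem~\ref{level}. Your extra checks make explicit what the paper leaves implicit, namely that strong convergence to a point of $\partial\mathcal S_{\mathcal A}$ forces $\mathrm{dist}(u_n,\partial\mathcal S_{\mathcal A})\to0$, that $d_{\mathcal N_a}<+\infty=\inf\Gamma$, and that $\mathcal N_a\subset\mathcal A$ gives $|[u>0]\cap\Omega_0|>0$.
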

\begin{proof}
In view of Remark \ref{disso}, Propositions \ref{Prop 4.1}, \ref{Prop 4.3}, \ref{Prop 4.4} and Theorem \ref{level}, the result follows.
\end{proof}

\medskip

In the last two applications, we chose $\Gamma\equiv+\infty$ in order to verify hypothesis $(H)$. In the next subsection, we are going to provide a problem for which hypothesis $(H)$ is satisfied with $\Gamma\not\equiv+\infty$. The resulting criterion is related to the asymptotic problems in \cite{LZ} and to the Nehari approach in \cite{NPS}; the contribution considered below is the quantified boundary-energy barrier for $p>1$. We do not assert a strict implication between \eqref{eq b} and the full hypotheses of \cite{LZ}.

\subsection{ A \texorpdfstring{$p$}{p}-Laplacian problem with asymptotic behaviour at infinity}\label{asymptotic-p}

Consider the $p$-Laplacian problem
\begin{equation}		                       \label{Prob P_1}\tag{$P_3$}
\left\{
\begin{array}{rccccl}
-\Delta_p u &=&f(u), \ \mbox{in} \ \Omega,\\
u &=&0, \ \mbox{on} \ \partial \Omega,
\end{array}
\right.
\end{equation}
where $\Omega \subset \R^N$ is a bounded domain with a smooth boundary, $1 < p < +\infty$ and $f \in C(\R)$ is an odd function. Set $q(s):=f(s)/(|s|^{p-2}s)$ for $s\neq0$, with $q(0)=\alpha$. This is an even quotient. We assume:
\begin{itemize}
\item[$(f_1)$] $\lim_{s\to0}q(s)=\alpha\in[0,\lambda_1)$ and $\lim_{|s|\to\infty}q(s)=\eta>\lambda_1$, where $\lambda_1$ is the first eigenvalue of $(-\Delta_p,W_0^{1,p}(\Omega))$;

\item[$(f_2)$] The function $q$ is strictly increasing for $s>0$;

\end{itemize}

The assumptions give $\alpha<q(s)<\eta$ for $s\neq0$ and $|f(s)|\leq\eta|s|^{p-1}$, which will be used in the convergence arguments.

It is well known that $\lambda_1$ can be represented as
\[
\lambda_1 = \inf\left\{\int_{\Omega}\vert\nabla u\vert^p \ dx: u \in W^{1,p}_{0}(\Omega), \int_{\Omega}\vert u\vert^p \ dx = 1\right\}.
\]
The norm in $W^{1,p}_{0}(\Omega)$ is defined by
\[\left\|u\right\| = \left(\int_{\Omega}\left|\nabla u\right|^{p} \ dx\right)^{1/p}.\]

The energy functional $I: W^{1,p}_{0}(\Omega) \rightarrow \R$ associated to problem \eqref{Prob P_1} is given by
\begin{equation}                       \label{eq 2.1}
I(u) = \frac{1}{p}\left\|u\right\|^p - \int_{\Omega}F(u) \ dx,
\end{equation}
where $F(s) = \int_{0}^{s}f(\tau) \ d\tau$. It can be shown that $I \in C^{1}(W^{1,p}_{0}(\Omega), \R)$ with
\begin{equation}                       \label{eq 2.2}
I'(u)v = \int_{\Omega}\left|\nabla u\right|^{p-2}\nabla u\nabla v \ dx - \int_{\Omega}f(u) v \ dx, \ \ \ \forall \ v \in W^{1,p}_{0}(\Omega).
\end{equation}
The Nehari set associated with the functional $I$ is the following set:
\[\mathcal{N} = \left\{u \in W^{1,p}_{0}(\Omega)\backslash\left\{0\right\}: \left\|u\right\|^p = \int_{\Omega}f(u)u \ dx\right\}.\]

Furthermore, we shall associate with problem \eqref{Prob P_1} the set $\mathcal{A}$ defined by
\begin{equation}                                        \label{eq 2.3}
\mathcal{A} = \left\{u \in W^{1,p}_{0}(\Omega): \left\|u\right\|^p < \eta\int_{\Omega} \left|u\right|^p \ dx\right\}.
\end{equation}                                           

The result below provides some information about the set $\mathcal{A}$.

\begin{lemma}                       \label{Lem 2.1}
Suppose that $f$ satisfies $(f_1)-(f_2)$. Then, $\mathcal{A}$ is a nonempty open set and $s\mathcal{A}\subset\mathcal{A}$, for all $s > 0$.
\end{lemma}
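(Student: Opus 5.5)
The plan follows the pattern of Lemmas \ref{Lem 3.2} and \ref{Lem 4.1}: write $\mathcal{A}$ as the preimage of an open set under a continuous homogeneous map, and then exhibit an explicit element.

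\emph{Openness.} Consider $h:W^{1,p}_0(\Omega)\to\R$,
\[
h(u)=\eta\int_\Omega|u|^p\,dx-\|u\|^p .
\]
The map $u\mapsto\|u\|^p$ is continuous. The map $u\mapsto\int_\Omega|u|^p\,dx$ is continuous as well, because the embedding $W^{1,p}_0(\Omega)\hookrightarrow L^p(\Omega)$ is continuous and the $L^p$ norm is continuous. Hence $h$ is continuous and $\mathcal{A}=h^{-1}((0,+\infty))$ is open.

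\emph{Cone property.} For $s>0$ we have $h(su)=s^p h(u)$, since both terms are $p$-homogeneous. So $u\in\mathcal{A}$ implies $su\in\mathcal{A}$. In addition, $0\notin\mathcal{A}$ because $h(0)=0$, so $\mathcal{A}\subset X\setminus\{0\}$, as $(A_2)$ requires.

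\emph{Nonemptiness.} Take a first eigenfunction $e_1$, which exists because the infimum defining $\lambda_1$ is attained (standard for bounded $\Omega$, by compactness of $W^{1,p}_0(\Omega)\hookrightarrow L^p(\Omega)$). Normalize it by $\int_\Omega|e_1|^p\,dx=1$, so that $\|e_1\|^p=\lambda_1$. Then $(f_1)$ gives
\[
h(e_1)=\eta-\lambda_1>0,
\]
so $e_1\in\mathcal{A}$. If one prefers not to rely on attainment, the definition of the infimum gives some $u$ with $\int_\Omega|u|^p\,dx=1$ and $\|u\|^p<\eta$, since $\lambda_1<\eta$. This $u$ lies in $\mathcal{A}$ directly.

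There is no real obstacle. The only point needing care is the nonemptiness step, and using $\lambda_1<\eta$ from $(f_1)$ settles it, with or without attainment of the infimum. Hypothesis $(f_2)$ is not needed for this lemma.
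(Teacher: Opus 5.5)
Your proposal is correct and takes the paper's approach: the paper also writes $\mathcal A$ as the preimage of an open half-line under the continuous $p$-homogeneous map $u\mapsto\|u\|^p-\eta\|u\|_p^p$ (you use its negative), and uses a first eigenfunction together with $\lambda_1<\eta$ for nonemptiness. Your side remarks that $0\notin\mathcal A$ and that $(f_2)$ is not needed are also accurate.
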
 
\begin{proof}
The continuous $p$-homogeneous map $h(u)=\|u\|^p-\eta\|u\|_p^p$ gives $\mathcal A=h^{-1}(( -\infty,0))$, an open cone. Since $\lambda_1<\eta$, a first eigenfunction belongs to $\mathcal A$.
\end{proof}

In what follows, by using the same notation of section \ref{abstract setting}, we have 
\[\mathcal{S}_\mathcal{A} = \left\{u \in \mathcal{S}: 1 < \eta\int_{\Omega} \left|u\right|^p \ dx\right\}.\]

\begin{proposition}                         \label{Prop 2.1}
Suppose that $f$ satisfies $(f_1)-(f_2)$. Then, assumptions $(A_1)-(A_3)$ are satisfied.
\end{proposition}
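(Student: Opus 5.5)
We verify $(A_1)$, $(A_2)$ and $(A_3)$ in turn.

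\textbf{Assumption $(A_1)$.} Take $\varphi(s)=s^{p-1}$, so that $\psi(u)=\|u\|^p/p$. The space $W^{1,p}_0(\Omega)$ with the gradient norm is uniformly convex and smooth, and $\psi$ is $C^1$ on $X\setminus\{0\}$ with
\[
J(u)v=\int_\Omega|\nabla u|^{p-2}\nabla u\nabla v\,dx .
\]
This $J$ is bounded on bounded sets, since $\|J(u)\|\le\|u\|^{p-1}$, and $J(w)w=\|w\|^p=1$ on $\mathcal S$. This is the standard setting already used in \cite{SW}.

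\textbf{Assumption $(A_2)$.} Fix $w\neq0$ and write, using the oddness of $f$ (so that $f(s)s=q(s)|s|^p$),
\[
\frac{\alpha_w'(s)}{s^{p-1}}=\|w\|^p-\int_{[w\neq0]}q(sw)|w|^p\,dx .
\]
Since $q$ is even and strictly increasing on $(0,\infty)$, the map $s\mapsto q(sw(x))$ is strictly increasing for each $x$ with $w(x)\neq0$. That set has positive measure, so the right-hand side is strictly decreasing in $s$.

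By dominated convergence, using $|q|\le\eta$, the limits are as follows.
\begin{itemize}
\item As $s\to0^+$ the expression tends to $\|w\|^p-\alpha\|w\|_p^p\ge(1-\alpha/\lambda_1)\|w\|^p>0$.
\item As $s\to\infty$ it tends to $\|w\|^p-\eta\|w\|_p^p$, which is negative exactly when $w\in\mathcal A$.
\end{itemize}
Hence for $w\in\mathcal A$ there is a unique zero $s_w$, with $\alpha_w'>0$ before it and $\alpha_w'<0$ after it.

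Now let $w\notin\mathcal A$, $w\neq0$. Since $q<\eta$ pointwise on $[w\neq0]$, we get
\[
\frac{\alpha_w'(s)}{s^{p-1}}>\|w\|^p-\eta\|w\|_p^p\ge0 .
\]
So there are no critical points of $\alpha_w$ outside $\mathcal A$.

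\textbf{Assumption $(A_3)$, lower bound.} Given $\varepsilon>0$, use $(f_1)$ together with $|q|\le\eta$ to get, for some $r\in(p,p^*)$ (or any $r>p$ when $p\ge N$),
\[
f(s)s\le(\alpha+\varepsilon)|s|^p+C_\varepsilon|s|^{r}.
\]
This gives
\[
I'(u)u\ge\Bigl(1-\frac{\alpha+\varepsilon}{\lambda_1}\Bigr)\|u\|^p-C\|u\|^r .
\]
Choosing $\varepsilon$ so the first coefficient is positive, this yields a uniform positive lower bound on $\|u\|$ for $u\in\mathcal N$, hence $s_w\ge\delta>0$ on $\mathcal S_{\mathcal A}$.

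\textbf{Assumption $(A_3)$, upper bound on compacts.} Let $\mathcal K\subset\mathcal S_{\mathcal A}$ be compact and suppose $w_n\in\mathcal K$ with $s_n=s_{w_n}\to\infty$. Extract a subsequence with $w_n\to w\in\mathcal K$ strongly and almost everywhere. The Nehari identity reads
\[
1=\int_\Omega q(s_nw_n)|w_n|^p\,dx .
\]
On $[w\neq0]$ we have $s_n|w_n|\to\infty$, so $q(s_nw_n)\to\eta$. By dominated convergence (bound $\eta|w_n|^p$, which converges in $L^1$), the right-hand side tends to $\eta\|w\|_p^p$. Thus $1=\eta\|w\|_p^p$, so $w\in\partial\mathcal S_{\mathcal A}$. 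This contradicts $w\in\mathcal S_{\mathcal A}$, which requires $\eta\|w\|_p^p>1$.

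\textbf{Main obstacle.} The delicate step is this last one. It needs the limit $q\to\eta$ only on $[w\neq0]$, and the contradiction comes from the strict inequality defining $\mathcal S_{\mathcal A}$, not from a superlinear blow-up as in the previous subsections. The set $[w=0]$ contributes nothing to the limit because of the factor $|w_n|^p\to0$ there.
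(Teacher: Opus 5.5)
Your proof is correct. For $(A_1)$, $(A_2)$ and the upper bound on compact sets in $(A_3)$ it coincides with the paper's argument. You use the same normalized derivative $\|w\|^p-\int_\Omega q(sw)|w|^p\,dx$, the same limits at $0$ and $\infty$, and the same strict inequality $q<\eta$ outside the cone. You also use the same dominated convergence with the $L^1$-convergent bound $\eta|w_n|^p$, leading to $1=\eta\|w\|_p^p$.

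The one genuine difference is the uniform lower bound $s_w\geq\delta$ in $(A_3)$.
\begin{itemize}
\item You use a direct estimate. You write $f(s)s\leq(\alpha+\varepsilon)|s|^p+C_\varepsilon|s|^r$ with a Sobolev exponent $r>p$, deduce $I'(u)u\geq(1-(\alpha+\varepsilon)/\lambda_1)\|u\|^p-C\|u\|^r$, and obtain an explicit lower bound on Nehari norms.
\item The paper argues by contradiction. If $s_{w_n}\to0$, it passes to the limit in the Nehari identity, $1=\int_\Omega q(s_nw_n)|w_n|^p\,dx\to\alpha\|w\|_p^p\leq\alpha/\lambda_1<1$. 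This uses only that $q$ is bounded and continuous at $0$, weak lower semicontinuity of the norm, and the compact embedding into $L^p$.
\end{itemize}
Your route is quantitative and mirrors how the paper verifies $(A_3)$ in its other applications. Its cost is splitting $q$ near zero and near infinity and invoking an embedding into some $L^r$ with $r>p$. The paper's route needs no auxiliary exponent but gives no explicit $\delta$.

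A small point: $f(s)s=q(s)|s|^p$ holds by the definition of $q$ alone. Oddness of $f$ is what makes $q$ even, and that is what you actually use to get monotonicity of $s\mapsto q(sw(x))$ where $w(x)<0$.
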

\begin{proof}
For $(A_1)$ take $\varphi(t)=t^{p-1}$, $\psi(u)=\|u\|^p/p$ and $J(u)v=\int_\Omega|\nabla u|^{p-2}\nabla u\cdot\nabla v\,dx$. The bounded, even quotient $q$, defined above, gives
\[
\frac{\alpha_w'(s)}{s^{p-1}}=\|w\|^p-\int_\Omega q(sw)|w|^pdx.
\]
For $w\in\mathcal A$ this is strictly decreasing, and dominated convergence gives the limits $\|w\|^p-\alpha\|w\|_p^p>0$ at zero and $\|w\|^p-\eta\|w\|_p^p<0$ at infinity. Its unique zero has the strict signs in $(A_2)$ and is the global maximum of $\alpha_w$. Outside the cone, $q(sw)<\eta$ on $[w\neq0]$ makes the same derivative strictly positive.

For the lower bound in $(A_3)$, suppose $w_n\in\mathcal S_{\mathcal A}$ and $s_n=s_{w_n}\to0$. Extract $w_n\rightharpoonup w$ in $X$ and $w_n\to w$ in $L^p$ and almost everywhere. Since $q$ is bounded and $q(0)=\alpha$, the Nehari identity implies
\[
1=\int_\Omega q(s_nw_n)|w_n|^pdx\longrightarrow\alpha\|w\|_p^p\leq\alpha/\lambda_1<1,
\]
a contradiction. Indeed, $|w_n|^p\to|w|^p$ in $L^1$, so one may replace that factor by $|w|^p$ and apply dominated convergence. For the upper bound, if $w_n$ belongs to a compact $\mathcal K\subset\mathcal S_{\mathcal A}$ and $s_n\to\infty$, extract $w_n\to w\in\mathcal K$ strongly in $X$. The same argument gives $1=\eta\|w\|_p^p$: on $[w\neq0]$, $q(s_nw_n)\to\eta$, and on $[w=0]$ the limiting $L^1$ factor vanishes. This contradicts $w\in\mathcal S_{\mathcal A}$.
\end{proof}

\begin{remark}
Unlike the two previous applications, in the present problem we are unable to prove that the functional is coercive on the Nehari set, because of the asymptotic behavior of the nonlinearity involved. When $\beta$ defined below is finite, the barrier $\Gamma$ is finite on the nonempty boundary; hypothesis $(H)$ allows this case without requiring boundary divergence. This fact brings considerable technical difficulties for the definition of the function $\Gamma$, making the problem particularly delicate. Therefore, in what follows we shall devote the next lemmas to overcoming this difficulty, in order to define a suitable function $\Gamma$ and to prove that hypothesis $(H)$ holds.
\end{remark}

Fix a finite exponent $r>p$ with $r\leq p^*=Np/(N-p)$ if $p<N$; when $p\geq N$, choose any finite $r>p$. The following Sobolev estimates use this fixed $r$, so they also cover $p=N$ without assuming an embedding into $L^\infty$.

\begin{lemma}                 \label{Lem 2.2} 
Suppose that $f$ satisfies $(f_1)-(f_2)$. Then, 
\[\inf_{w \in \partial\mathcal{S}_{\mathcal{A}}} \vert [w \neq 0]\vert\geq \left(\frac{S_r^p}{\eta}\right)^{\frac{r}{r - p}},\]
where $\eta$ was introduced in $(f_1)$ and 
\[S_r = \inf_{w \ \in W^{1,p}_{0}(\Omega)\backslash\left\{0\right\}} \frac{(\int_{\Omega} \vert \nabla w\vert^{p}\ dx)^{1/p}}{(\int_{\Omega} \vert w\vert^{r}\ dx)^{1/r}}\cdot\]
\end{lemma}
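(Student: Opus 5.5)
The plan is to first identify the relative boundary explicitly and then combine Hölder's inequality with the Sobolev constant $S_r$. Since $h(u)=\|u\|^p-\eta\|u\|_p^p$ is continuous on $\mathcal S$, and $\mathcal S_{\mathcal A}=\{w\in\mathcal S:\ \eta\int_\Omega|w|^p\,dx>1\}$, any $w\in\partial\mathcal S_{\mathcal A}$ is a limit of points of $\mathcal S_{\mathcal A}$ but does not itself lie in that open set. Hence
\[
\|w\|=1\qquad\text{and}\qquad \eta\int_\Omega|w|^p\,dx=1 .
\]
If $\partial\mathcal S_{\mathcal A}=\emptyset$, the infimum is $+\infty$ by the convention in Remark \ref{rema1}, so there is nothing to prove. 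The argument therefore only needs these two identities for a fixed boundary point $w$.

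Next, I would apply Hölder's inequality on the set $[w\neq0]$ with exponents $r/p$ and $r/(r-p)$. This is legitimate because $r>p$ is finite, and $w\in L^r$ by the choice of $r$ and the Sobolev embedding. This gives
\[
\frac1\eta=\int_\Omega|w|^p\,dx=\int_{[w\neq0]}|w|^p\,dx\leq |[w\neq0]|^{\frac{r-p}{r}}\left(\int_\Omega|w|^r\,dx\right)^{p/r}.
\]
By the definition of $S_r$ together with $\|w\|=1$, we have $(\int_\Omega|w|^r dx)^{1/r}\leq S_r^{-1}$. Hence
\[
\frac1\eta\leq |[w\neq0]|^{\frac{r-p}{r}}\,S_r^{-p}.
\]
In particular $|[w\neq0]|>0$, which is also clear since $w\neq0$. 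Rearranging yields $|[w\neq0]|^{(r-p)/r}\geq S_r^p/\eta$. Raising both sides to the power $r/(r-p)>0$ then gives the stated bound, uniformly in $w\in\partial\mathcal S_{\mathcal A}$, and taking the infimum concludes the proof.

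The only delicate step is confirming that $S_r>0$, i.e.\ that the embedding $W_0^{1,p}(\Omega)\hookrightarrow L^r(\Omega)$ holds for the fixed $r$. For $p<N$ this follows from $r\leq p^*$. For $p\geq N$ it follows from the embedding into every finite $L^r$ on a bounded domain; this is exactly why the paper fixes a finite $r$. I would simply invoke these standard Sobolev embeddings. Everything else is routine.
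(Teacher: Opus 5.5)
Your proposal is correct and follows the paper's proof essentially verbatim. Both use $\|w\|=1$ and $\eta\|w\|_p^p=1$ on the relative boundary, then H\"older with exponents $r/p$ and $r/(r-p)$, then the Sobolev bound $\|w\|_r\leq S_r^{-1}$, and rearrange; your remarks on the empty-boundary convention and on $S_r>0$ only make explicit what the paper leaves implicit.
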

\begin{proof}
For $w\in\partial\mathcal S_{\mathcal A}$, H\"older's inequality and the Sobolev embedding give
\[
1=\eta\|w\|_p^p\leq\eta|[w\neq0]|^{(r-p)/r}\|w\|_r^p
\leq\frac{\eta}{S_r^p}|[w\neq0]|^{(r-p)/r}.
\]
Rearranging proves the estimate.
\end{proof}


Before proving our next result, we are going to define the following ``number''
\begin{equation}                                        \label{eq 2.6}
\beta := \lim_{\vert s\vert \rightarrow +\infty}\left[\frac{1}{p}f(s)s - F(s)\right].
\end{equation}
To justify this assertion without differentiating $f$, write $K(t)=f(t)t/p-F(t)$ for $t>0$. Integration by parts for the continuous increasing quotient $q$ gives
\[
K(t)=\frac1p\int_{(0,t]}s^p\,dq(s).
\]
Hence $K$ is even, nonnegative and strictly increasing in $|t|$. In particular, $\beta$ is well defined and belongs to $(0,+\infty]$. In what follows, we say that the hypothesis $(\beta)$ is satisfied if
\begin{equation}           \label{eq b} \tag{$\beta$}
    \beta > \frac{1}{p}(\eta - \alpha)\left(\frac{\tau_{\mathcal{A}}}{S_r}\right)^p\left(\frac{\eta}{S_r^p}\right)^{\frac{r}{r - p}}\vert\Omega\vert^{\frac{r - p}{r}},
\end{equation}
where $\tau_{\mathcal{A}} = \inf_{w \in\mathcal{S}_\mathcal{A}}s_w > 0$. Observe that if $\beta=+\infty$, then hypothesis \eqref{eq b} is trivially satisfied. Examples of functions $f$ satisfying assumption \eqref{eq b} with $\beta<+\infty$ were provided in \cite{NPS} for the case $p=2$. Example~\ref{p-explicit-primitive} below gives an explicit primitive for every $p>1$ and verifies all the assumptions, including \eqref{eq b}, with $\beta<+\infty$.

\medskip

\begin{lemma}                         \label{Lem 2.4}
Suppose that $f$ satisfies $(f_1)-(f_2)$ and \eqref{eq b} occurs. Then,    
    \[
    c_\mathcal{N} < \beta\inf_{w \in \partial\mathcal{S}_{\mathcal{A}}}\vert[w \neq 0]\vert,
    \]
    where $c_{\mathcal{N}}:=\inf_{u\in\mathcal{N}}I(u)$.
\end{lemma}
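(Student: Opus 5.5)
The plan is to sandwich the two sides of the inequality between the same explicit constant, using Lemma \ref{Lem 2.2} for the right-hand side and a direct estimate of $\Psi$ along the Nehari rays for the left-hand side.

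\emph{Step 1: lower bound for the right-hand side.} If $\partial\mathcal S_{\mathcal A}=\emptyset$, the infimum equals $+\infty$; since $\beta>0$ and $c_{\mathcal N}$ is finite (Remark \ref{bounded}), there is nothing to prove. The same holds if $\beta=+\infty$, because Lemma \ref{Lem 2.2} makes the measure infimum positive. Otherwise I combine Lemma \ref{Lem 2.2} with \eqref{eq b}. The factors $(\eta/S_r^p)^{r/(r-p)}$ and $(S_r^p/\eta)^{r/(r-p)}$ cancel, giving
\[
\beta\inf_{w\in\partial\mathcal S_{\mathcal A}}|[w\neq0]|\geq \beta\Big(\frac{S_r^p}{\eta}\Big)^{\frac{r}{r-p}}>\frac1p(\eta-\alpha)\Big(\frac{\tau_{\mathcal A}}{S_r}\Big)^p|\Omega|^{\frac{r-p}{r}}.
\]
It then suffices to show that $c_{\mathcal N}$ is at most the last quantity.

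\emph{Step 2: upper bound for $c_{\mathcal N}$.} Fix $w\in\mathcal S_{\mathcal A}$ and set $s=s_w$. By Proposition \ref{Abstract}(d), $c_{\mathcal N}\leq\Psi(w)=I(sw)=\frac{s^p}{p}-\int_\Omega F(sw)\,dx$. I then use the following facts.
\begin{itemize}
\item From $q>\alpha$ off the origin, integrating gives $F(t)\geq\frac{\alpha}{p}|t|^p$. Hence $\Psi(w)\leq\frac{s^p}{p}\big(1-\alpha\|w\|_p^p\big)$.
\item The cone condition defining $\mathcal S_{\mathcal A}$ gives $1<\eta\|w\|_p^p$. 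Hence $1-\alpha\|w\|_p^p<(\eta-\alpha)\|w\|_p^p$.
\item H\"older's inequality with exponents $r/p$ and $r/(r-p)$, together with the definition of $S_r$ and $\|w\|=1$, gives $\|w\|_p^p\leq|\Omega|^{(r-p)/r}\|w\|_r^p\leq|\Omega|^{(r-p)/r}S_r^{-p}$.
\end{itemize}
Combining these,
\[
c_{\mathcal N}\leq\frac1p(\eta-\alpha)\frac{s_w^p}{S_r^p}|\Omega|^{\frac{r-p}{r}}\qquad\text{for every }w\in\mathcal S_{\mathcal A}.
\]

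\emph{Step 3: conclusion.} Take the infimum over $w\in\mathcal S_{\mathcal A}$. The map $s\mapsto s^p$ is increasing and continuous, so $\inf_w s_w^p=\tau_{\mathcal A}^p$. This gives $c_{\mathcal N}\leq\frac1p(\eta-\alpha)(\tau_{\mathcal A}/S_r)^p|\Omega|^{(r-p)/r}$, and the strict inequality from Step 1 completes the proof.

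The only delicate point is checking that the estimate in Step 2 holds pointwise for each $w$ before the infimum is taken, so that no minimizer is needed. Specifically, one must confirm that $F(t)\geq\alpha|t|^p/p$ follows from $(f_1)$–$(f_2)$, which holds because $f(t)t=q(t)|t|^p\geq\alpha|t|^p$.
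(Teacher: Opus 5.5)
Your proposal is correct and follows the paper's proof. You bound $\Psi(w)$ by $\frac1p(\eta-\alpha)s_w^p\|w\|_p^p$, apply H\"older--Sobolev, take the infimum over $\mathcal S_{\mathcal A}$ to bring in $\tau_{\mathcal A}$, and close with Lemma \ref{Lem 2.2} and \eqref{eq b}. The only difference is harmless: you get the per-direction bound from the cone inequality $1<\eta\|w\|_p^p$ together with $F(t)\geq\alpha|t|^p/p$, whereas the paper rewrites $I(s_ww)$ through the Nehari identity as $\int_\Omega[\frac1p f(u)u-F(u)]\,dx$ and uses $\alpha<q<\eta$ pointwise.
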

\begin{proof}
   Let $w \in \mathcal{S}_{\mathcal{A}}$. It follows from $(f_1)-(f_2)$ that
   
    \[
c_{\mathcal{N}} \leq \int_{[w \neq 0]}\left[\frac{1}{p}q(s_ww) - \frac{F(s_w w)}{\vert s_w w\vert^{p}}\right]\vert s_w w\vert^{p} \ dx< \frac{1}{p}(\eta - \alpha)s^p_w\int_{\Omega}\vert w\vert^{p} \ dx. 
    \]
For $\|w\|=1$, H\"older and Sobolev give $\|w\|_p^p\leq|\Omega|^{(r-p)/r}\|w\|_r^p\leq|\Omega|^{(r-p)/r}/S_r^p$. Hence
\[
c_{\mathcal N}\leq\frac{\eta-\alpha}{p}\frac{s_w^p}{S_r^p}|\Omega|^{(r-p)/r},
\qquad w\in\mathcal S_{\mathcal A}.
\]
Thus, 
\begin{equation}                      \label{eq 2.11}
    c_\mathcal{N} \leq \frac{1}{p}(\eta - \alpha)\frac{\tau^p_{\mathcal{A}}}{S_r^p}\vert\Omega\vert^{\frac{r - p}{r}},
\end{equation}
where $\tau_{\mathcal{A}} := \inf_{w \in \mathcal{S}_{\mathcal{A}}}s_w > 0$. Now, Lemma \ref{Lem 2.2}, \eqref{eq b} and \eqref{eq 2.11} imply
$$                                      
    \beta\inf_{w \in \partial\mathcal{S}_{\mathcal{A}}}\vert[w \neq 0]\vert\geq \beta \left(\frac{S_r^p}{\eta}\right)^{\frac{r}{r - p}} > \frac{1}{p}(\eta - \alpha)\left(\frac{\tau_{\mathcal{A}}}{S_r}\right)^p\vert \Omega\vert^{\frac{r - p}{r}} \geq c_{\mathcal{N}}.
$$                
\end{proof}

\begin{lemma}                                        \label{Prop 2.2}
Suppose that $f$ satisfies $(f_1)$-$(f_2)$ and (\ref{eq b}) occurs. If $\left\{w_n\right\} \subset \mathcal{S}_\mathcal{A}$ is such that $\mathrm{dist}(w_n, \partial\mathcal{S}_\mathcal{A}) \ \rightarrow \ 0$, then, up to a subsequence, there exists $w \in W^{1,p}_{0}(\Omega)\backslash\left\{0\right\}$ such that          
\[w_n \rightharpoonup w \ \mbox{in} \ W^{1,p}_{0}(\Omega),\]
\[s_{w_n} \rightarrow +\infty, \ \ \mbox{as} \ n \rightarrow +\infty\] 
and
\[\liminf_{n \rightarrow +\infty}\Psi(w_n) \geq \beta M_r,\qquad M_r:=\left(\frac{S_r^p}{\eta}\right)^{r/(r-p)}.\]
\end{lemma}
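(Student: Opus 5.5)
First I extract a weak limit and identify its zero set. Since $\|w_n\|=1$, after a subsequence $w_n\rightharpoonup w$ in $W_0^{1,p}(\Omega)$, $w_n\to w$ in $L^p$ and almost everywhere. Pick $z_n\in\partial\mathcal S_{\mathcal A}$ with $\|w_n-z_n\|\to0$. Then $\eta\|z_n\|_p^p=1$, and the $L^p$ embedding gives $\eta\|w_n\|_p^p\to1$. Hence $\eta\|w\|_p^p=1$, so $w\neq0$.

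Next I show $s_n:=s_{w_n}\to+\infty$. Suppose instead that a subsequence has $s_n\to s_0\in[\tau_{\mathcal A},\infty)$. Pass to the limit in the Nehari identity
\[1=\int_\Omega q(s_nw_n)|w_n|^pdx,\]
using the bounded quotient $q$ and $|w_n|^p\to|w|^p$ in $L^1$, as in Proposition \ref{Prop 2.1}. This gives $1=\int_\Omega q(s_0w)|w|^pdx$. Since $q<\eta$ on $[w\neq0]$ and $w\neq0$, we get $1<\eta\|w\|_p^p=1$, a contradiction.

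For the energy, write
\[\Psi(w_n)=I(s_nw_n)=\int_\Omega K(s_nw_n)\,dx,\]
using the Nehari identity, where $K(t)=f(t)t/p-F(t)\ge0$ is even and increasing in $|t|$. On $[w\neq0]$ we have $|s_nw_n|\to\infty$ almost everywhere, so $K(s_nw_n)\to\beta$ there. Fatou's lemma then yields
\[\liminf_{n\to\infty}\Psi(w_n)\geq\beta\,|[w\neq0]|.\]
If $\beta=+\infty$, the same Fatou argument gives divergence, consistent with the claim.

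It remains to bound $|[w\neq0]|$ from below by $M_r$. The argument of Lemma \ref{Lem 2.2} adapts to the weak limit. By weak lower semicontinuity $\|w\|\le1$, so $\|w\|_r\le\|w\|/S_r\le 1/S_r$. Applying H\"older to $1=\eta\|w\|_p^p$ gives
\[1\le\eta|[w\neq0]|^{(r-p)/r}\|w\|_r^p\le\frac{\eta}{S_r^p}|[w\neq0]|^{(r-p)/r},\]
and therefore $|[w\neq0]|\geq M_r$. The main subtlety, and the step I would check carefully, is that $w$ need not lie on the sphere. Only $\|w\|\le1$ holds, but this suffices because the inequality runs in the right direction.
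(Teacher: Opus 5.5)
Your proposal is correct and follows the paper's proof essentially step by step. The steps match: approximating boundary points $z_n$ give $\eta\|w\|_p^p=1$ (so $w\neq0$); a bounded subsequence of $s_{w_n}$ leads to $1<\eta\|w\|_p^p$, which is impossible; Fatou is applied to $K(s_{w_n}w_n)$ on $[w\neq0]$; and the H\"older--Sobolev bound gives $|[w\neq0]|\geq M_r$ using only $\|w\|\leq1$.

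There are two small differences. You get $\eta\|w_n\|_p^p\to1$ directly from $\|w_n-z_n\|_p\to0$ and the reverse triangle inequality in $L^p$, where the paper uses a pointwise estimate for differences of $p$-th powers together with H\"older. The paper also remarks that, since $M_r$ does not depend on $w$, the liminf bound extends to the full sequence; the ``up to a subsequence'' in the statement does not require this.
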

\begin{proof} Let $\left\{w_n\right\} \subset \mathcal{S}_\mathcal{A}$ be a sequence such that $\mathrm{dist}(w_n, \partial\mathcal{S}_\mathcal{A}) \rightarrow 0$. Since $\left\{w_n\right\}$ is bounded there exists $w \in W^{1,p}_{0}(\Omega)$ such that, up to a subsequence,
\begin{equation}                                    \label{eq 2.7}
w_n \rightharpoonup w \ \ \mbox{in} \ W^{1,p}_{0}(\Omega).
\end{equation}
Once $\mathrm{dist}(w_n, \partial\mathcal{S}_\mathcal{A}) \rightarrow 0$, there exists $z_n \in \partial\mathcal{S}_\mathcal{A}$ satisfying
\begin{equation}                                     \label{eq 2.8}
\left\|w_n - z_n\right\| \rightarrow 0, \ \ \ \mbox{as} \  n \rightarrow +\infty.
\end{equation}
Note that, 
\[\left|\eta\int_{\Omega}\vert w_n\vert^p \ dx - 1\right| = \eta\left|\int_{\Omega} (\vert w_n\vert^p  - \vert z_n\vert^p) \ dx\right| \leq  \eta\int_{\Omega}\vert \vert w_n\vert^p - \vert z_n\vert^p \vert \ dx.\]
For real $p>1$, use the pointwise inequality
\[
\bigl||a|^p-|b|^p\bigr|\leq p|a-b|(|a|+|b|)^{p-1}.
\]
Applying it to $w_n,z_n$ gives
\[
\left|\eta\int_\Omega|w_n|^pdx-1\right|
\leq p\eta\int_\Omega|w_n-z_n|(|w_n|+|z_n|)^{p-1}dx.
\]
H\"older's inequality with exponents $p$ and $p/(p-1)$ yields
\[
\left|\eta\int_\Omega|w_n|^pdx-1\right|
\leq p\eta |w_n-z_n|_p(|w_n|_p+|z_n|_p)^{p-1}.
\]
Since both vectors are on the unit sphere, Poincar\'e's inequality gives a constant $C>0$, independent of $n$, such that
\[
\left|\eta\int_\Omega|w_n|^pdx-1\right|\leq C\|w_n-z_n\|.
\]
From $(\ref{eq 2.8})$, it follows that 
\[\eta\int_{\Omega}\vert w_n\vert^p \ dx \rightarrow 1,\]
as $n \rightarrow +\infty$. Since $W_0^{1,p}(\Omega)$ embeds compactly into $L^p(\Omega)$ for every $p>1$ on a bounded domain, we have
\begin{equation}                        \label{eq 2.9}
\eta\int_{\Omega} \vert w\vert^p \ dx = 1,
\end{equation}
proving that $w \neq 0$.

Suppose by contradiction that, for some subsequence, $\left\{s_{w_n}\right\}$ is bounded. By Proposition $\ref{Prop 2.1}$, there exists $s_0 > 0$ such that, up to a subsequence, 
\begin{equation}                     \label{eq 2.10}
s_{w_n} \rightarrow s_0\ \ \mbox{as}\ \ n\to\infty.
\end{equation}      
It follows from (\ref{eq 2.7}), Sobolev compact embedding and Lebesgue dominated convergence theorem, that 
$$
s_0^{p-1} = \displaystyle\int_{\Omega}f(s_0 w)w \ dx.
$$
By (\ref{eq 2.10}), we have $w \neq 0$, and consequently it makes sense to write 
$$
1 = \displaystyle\int_{[w \neq 0]}q(s_0w) \vert w\vert^p \ dx.
$$
From hypothesis $(f_2)$, we obtain $1 < \eta\int_{\Omega}\vert w\vert^p \ dx$, which contradicts $(\ref{eq 2.9})$. Therefore, $s_{w_n} \rightarrow +\infty$. Finally, since $s_{w_n}w_n \in \mathcal{N}$, we have
\[\liminf_{n \rightarrow +\infty} \Psi(w_n) = \liminf_{n \rightarrow+\infty} \int_{\Omega} \left[\frac{1}{p}f(s_{w_n}w_n)s_{w_n}w_n - F(s_{w_n}w_n)\right] \ dx.\]
Again by $(f_2)$, we know that $s\mapsto (1/p)f(s)s-F(s)$ is nonnegative and increasing as a function of $|s|$. Thus,
\[\liminf_{n \rightarrow +\infty} \Psi(w_n) \geq \liminf_{n \rightarrow+\infty} \int_{[w \neq 0]} \left[\frac{1}{p}f(s_{w_n}w_n)s_{w_n}w_n - F(s_{w_n}w_n)\right] \ dx.\]
After a subsequence realizing the liminf, compact $L^p$ convergence gives $w_n(x)\to w(x)$ almost everywhere. On $[w\neq0]$, the fact that $s_{w_n}\to\infty$ implies $|s_{w_n}w_n(x)|\to\infty$, so $K(s_{w_n}w_n)\to\beta$. Fatou's lemma and $K\geq0$ give
\[
\liminf_n\Psi(w_n)\geq\beta|[w\neq0]|.
\]
Although $w$ need not have unit norm, weak lower semicontinuity gives $\|w\|\leq1$. Using \eqref{eq 2.9} and the same H\"older--Sobolev estimate as in Lemma \ref{Lem 2.2},
\[
1=\eta|w|_p^p\leq\eta |[w\neq0]|^{(r-p)/r}|w|_r^p
\leq\frac\eta{S_r^p}|[w\neq0]|^{(r-p)/r},
\]
we obtain $|[w\neq0]|\geq M_r$, proving the stated estimate, also when $\beta=+\infty$. Since $M_r$ is independent of the weak limit, applying this argument to a subsequence realizing the original liminf proves the bound for the full sequence.
                        
\end{proof}

Let $\Gamma:\partial\mathcal{S}_{\mathcal{A}}\to\R\cup\{+\infty\}$ be given by
\begin{equation}\label{gam}
\Gamma(u)=\beta\left|[u \neq 0]\right|,
\end{equation}
where $\left|[u \neq 0]\right|$ denotes the measure of $[u \neq 0]=\{x\in\Omega: u(x)\neq 0\}$.

\begin{proposition}\label{outra}
Suppose that $f$ satisfies $(f_1)-(f_2)$ and \eqref{eq b} occurs. Then, functional $I$ verifies hypothesis $(H)$, with $\Gamma$ defined as in \eqref{gam}.
\end{proposition}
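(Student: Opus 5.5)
The plan is to check the three ingredients of $(H)$ one at a time for $\Gamma(u)=\beta|[u\neq0]|$: lower semicontinuity of $\Gamma$ on $\partial\mathcal S_{\mathcal A}$, the strict separation $c_{\mathcal N}<\inf_{\partial\mathcal S_{\mathcal A}}\Gamma$, and the boundary liminf inequality. By Lemma \ref{Lem 2.2}, every $u\in\partial\mathcal S_{\mathcal A}$ satisfies $|[u\neq0]|\geq M_r>0$. Hence $\Gamma$ is well defined with values in $(0,+\infty]$. When $\beta=+\infty$ we get $\Gamma\equiv+\infty$, and both the lower semicontinuity and the separation are trivial; this uses Remark \ref{bounded}, which gives $c_{\mathcal N}<+\infty$ because $\mathcal N\neq\emptyset$. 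So the real work is the case $\beta<+\infty$, together with the liminf inequality in general.

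\emph{Lower semicontinuity.} Let $u_n\to u$ in $\partial\mathcal S_{\mathcal A}$. I pass to a subsequence realizing $\liminf_n|[u_n\neq0]|$. Since $u_n\to u$ in $L^p$ by the Poincaré inequality, a further subsequence converges almost everywhere. At every point where $u(x)\neq0$ and convergence holds, we have $u_n(x)\neq0$ for large $n$. This gives $\chi_{[u\neq0]}\leq\liminf_n\chi_{[u_n\neq0]}$ a.e., and Fatou's lemma then yields $|[u\neq0]|\leq\liminf_n|[u_n\neq0]|$. Multiplying by the constant $\beta\in(0,+\infty)$ proves that $\Gamma$ is lower semicontinuous.

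\emph{Separation.} This is exactly Lemma \ref{Lem 2.4}: under \eqref{eq b},
\[
c_{\mathcal N}<\beta\inf_{w\in\partial\mathcal S_{\mathcal A}}|[w\neq0]|=\inf_{\partial\mathcal S_{\mathcal A}}\Gamma .
\]

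\emph{Boundary liminf inequality.} Let $\{u_n\}\subset\mathcal S_{\mathcal A}$ with $u_n\to u\in\partial\mathcal S_{\mathcal A}$ strongly. Then $\mathrm{dist}(u_n,\partial\mathcal S_{\mathcal A})\leq\|u_n-u\|\to0$, so Lemma \ref{Prop 2.2} applies. I first extract a subsequence realizing $\liminf_n I(m(u_n))=\liminf_n\Psi(u_n)$; along it, Lemma \ref{Prop 2.2} gives $s_{u_n}\to+\infty$. Because the convergence is strong, the weak limit in Lemma \ref{Prop 2.2} is $u$ itself. A further subsequence converges almost everywhere. I then repeat the end of the proof of Lemma \ref{Prop 2.2} with $w=u$. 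Using the Nehari identity,
\[
\Psi(u_n)=\int_\Omega K(s_{u_n}u_n)\,dx\geq\int_{[u\neq0]}K(s_{u_n}u_n)\,dx,
\]
since $K\geq0$. On $[u\neq0]$ we have $|s_{u_n}u_n(x)|\to\infty$, and $K$ is even and increasing in $|t|$ with limit $\beta$, so $K(s_{u_n}u_n)\to\beta$ pointwise. Fatou's lemma then gives $\liminf_n\Psi(u_n)\geq\beta|[u\neq0]|=\Gamma(u)$, and this holds also when $\beta=+\infty$.

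The main point to watch, rather than a genuine obstacle, is this last step. Lemma \ref{Prop 2.2} is stated only with the uniform constant $\beta M_r$, whereas $(H)$ requires the sharper pointwise bound $\Gamma(u)$ at the actual limit. The bound is recovered because, under strong convergence, the weak limit coincides with $u$. The subsequences must also be taken in the right order: first one realizing the liminf, then one converging almost everywhere.
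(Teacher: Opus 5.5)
Your proposal is correct and follows the paper's proof essentially step for step. You establish lower semicontinuity of $|[u\neq0]|$ through an almost everywhere subsequence and Fatou, take the separation from Lemma \ref{Lem 2.4}, and obtain the boundary inequality from $s_{u_n}\to\infty$ (Lemma \ref{Prop 2.2}) together with Fatou applied to $K(s_{u_n}u_n)$ on $[u\neq0]$. One small citation slip: $c_{\mathcal N}<+\infty$ comes from $\mathcal N\neq\emptyset$ in Lemma \ref{Lem 1.2}(c), whereas Remark \ref{bounded} gives the lower bound $c_{\mathcal N}>-\infty$.
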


\begin{proof}
For strong convergence $w_n\to w$ in $W_0^{1,p}$, an almost everywhere subsequence satisfies $\mathbf1_{[w\neq0]}\leq\liminf\mathbf1_{[w_n\neq0]}$. Applying Fatou along a subsequence realizing the liminf proves lower semicontinuity of $|[w\neq0]|$, hence of $\Gamma$ for finite $\beta$. For $\beta=+\infty$, define $\Gamma\equiv+\infty$ on the relative boundary.

Lemma \ref{Lem 2.4} gives $c_{\mathcal N}<\inf_{\partial\mathcal S_{\mathcal A}}\Gamma$. For the pointwise boundary inequality in $(H)$, suppose $w_n\in\mathcal S_{\mathcal A}$ converges strongly to $w\in\partial\mathcal S_{\mathcal A}$. Lemma \ref{Prop 2.2} gives $s_{w_n}\to\infty$. On $[w\neq0]$, almost everywhere convergence then gives $K(s_{w_n}w_n)\to\beta$. The same nonnegative Fatou argument yields
\[
\liminf_n\Psi(w_n)\geq\beta|[w\neq0]|=\Gamma(w),
\]
with the usual interpretation if $\beta=+\infty$. This proves $(H)$.  
\end{proof}

The following result ensures that $I$ satisfies the Palais-Smale condition in a given interval.
\begin{proposition}                        \label{Prop 2.3}
Suppose that $f$ satisfies $(f_1)-(f_2)$ and \eqref{eq b} occurs. Then, the functional $I$ satisfies the $(PS)^{\mathcal N}_c$ condition, for any $c \in [c_\mathcal{N}, \inf_{w \in \partial\mathcal{S}_\mathcal{A}}\Gamma(w))$.
\end{proposition}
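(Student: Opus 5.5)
Let $\{u_n\}\subset\mathcal N$ be a sequence with $I(u_n)\to c$, where $c\in[c_{\mathcal N},\inf_{\partial\mathcal S_{\mathcal A}}\Gamma)$, and with $\|I'(u_n)\|\to0$ in the dual of $W_0^{1,p}(\Omega)$. The argument has two parts. First we show that $\{u_n\}$ is bounded in $W_0^{1,p}(\Omega)$; the upper restriction on $c$ is used only here. Then we upgrade weak convergence to strong convergence using the $(S_+)$ property of $-\Delta_p$.

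\emph{Boundedness.} Suppose, for contradiction, that $\|u_n\|\to\infty$ along a subsequence. Set $w_n=u_n/\|u_n\|\in\mathcal S_{\mathcal A}$, so that $s_{w_n}=\|u_n\|\to\infty$ by Proposition \ref{homeo}. Extract $w_n\rightharpoonup w$ in $W_0^{1,p}$, with $w_n\to w$ in $L^p$ and almost everywhere. From the Nehari identity
\[
1=\int_\Omega q(s_{w_n}w_n)|w_n|^p\,dx,
\]
together with $q\le\eta$, $q(s)\to\eta$ as $|s|\to\infty$, and the vanishing of the limiting $L^1$ factor on $[w=0]$ (exactly as in Proposition \ref{Prop 2.1}), we get $1=\eta\|w\|_p^p$. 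In particular $w\ne0$, and $|[w\ne0]|\ge M_r$ by the H\"older--Sobolev estimate of Lemma \ref{Lem 2.2}. Since $I(u_n)=\int_\Omega K(s_{w_n}w_n)\,dx$ with $K\ge0$, and $K(s_{w_n}w_n)\to\beta$ on $[w\ne0]$, Fatou's lemma gives
\[
c=\lim I(u_n)\ge\beta|[w\ne0]|.
\]

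To reach a contradiction we must compare $\beta|[w\ne0]|$ with $\inf_{\partial\mathcal S_{\mathcal A}}\Gamma$, and this is the main obstacle, because $w$ need not lie on the sphere. I would try two routes.
\begin{itemize}
\item[(i)] Show $\|w_n-w\|\to0$, which puts $w\in\mathcal S\cap\{\eta\|w\|_p^p=1\}=\partial\mathcal S_{\mathcal A}$ and yields $c\ge\Gamma(w)\ge\inf\Gamma$, a contradiction. For this, divide $I'(u_n)(w_n-w)\to0$ by $s_{w_n}^{p-1}$ to obtain
\[
\int_\Omega|\nabla w_n|^{p-2}\nabla w_n\cdot\nabla(w_n-w)\,dx-\int_\Omega q(s_{w_n}w_n)|w_n|^{p-2}w_n(w_n-w)\,dx\to0 .
\]
The second integral tends to $0$ because $q$ is bounded and $w_n\to w$ in $L^p$. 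The $(S_+)$ property then gives $w_n\to w$ strongly.
\item[(ii)] If only weak convergence were available, one would instead need $|[w\ne0]|\ge\inf_{\partial\mathcal S_{\mathcal A}}|[v\ne0]|$, for instance by taking $v=w/\|w\|$ and rescaling. This is less clean, so I would commit to (i). Note that (i) uses the ambient derivative condition, not merely membership in $\mathcal N$.
\end{itemize}

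\emph{Strong convergence.} Once $\{u_n\}$ is bounded, extract $u_n\rightharpoonup u$ in $W_0^{1,p}$, with $u_n\to u$ in $L^p$ and almost everywhere. Since $|f(s)|\le\eta|s|^{p-1}$, the sequence $f(u_n)$ converges to $f(u)$ in $L^{p/(p-1)}$. Hence $\int_\Omega f(u_n)(u_n-u)\,dx\to0$, and $I'(u_n)(u_n-u)\to0$ gives
\[
\int_\Omega|\nabla u_n|^{p-2}\nabla u_n\cdot\nabla(u_n-u)\,dx\to0 .
\]
The $(S_+)$ property of $-\Delta_p$, which follows from uniform convexity of $W_0^{1,p}$ together with weak lower semicontinuity of the norm, yields $u_n\to u$ strongly.
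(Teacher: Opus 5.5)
Your proposal is correct and follows essentially the paper's route. You normalize the blowing-up sequence, divide the ambient derivative by $\|u_n\|^{p-1}$ and use $(S_+)$ to get $w_n\to w$ strongly with $\|w\|=1=\eta|w|_p^p$; the Fatou bound $c\geq\beta|[w\neq0]|=\Gamma(w)$ then contradicts $c<\inf_{\partial\mathcal S_{\mathcal A}}\Gamma$, and $(S_+)$ again handles the bounded case. The differences are minor:
\begin{itemize}
\item You read off $w\neq0$ and $\eta|w|_p^p=1$ directly from the normalized Nehari identity. This avoids the paper's Vitali identification of $f(t_nv_n)/t_n^{p-1}$ and its limiting eigenvalue equation.
\item You should justify $w\in\partial\mathcal S_{\mathcal A}$, as the paper does, by $w$ being a strong limit of points of $\mathcal S_{\mathcal A}$ that is not itself in $\mathcal S_{\mathcal A}$. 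Asserting that $\partial\mathcal S_{\mathcal A}$ equals the whole level set $\mathcal S\cap\{\eta\|\cdot\|_p^p=1\}$ is not established.
\end{itemize}
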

\begin{proof} Let $\left\{u_n\right\} \subset \mathcal{N}$ be a  Palais-Smale sequence for $I$ at the level $c$ . The sequence $\left\{u_n\right\}$ is bounded. In fact, suppose by contradiction that, up to  a subsequence, 
$ \left\|u_n\right\| \rightarrow +\infty, \ \ \mbox{as} \ n \rightarrow +\infty$. Let $v_n := \frac{u_n}{\Vert u_n\Vert} = m^{-1}(u_n) \in \mathcal{S}_{\mathcal{A}}$. The sequence $\left\{v_n\right\}$ is bounded in $W^{1,p}_{0}(\Omega)$ and 
\begin{equation}                                       \label{eq 2.15}
    \Psi(v_n) \rightarrow c, \ \ \ \mbox{as} \ n \rightarrow +\infty.
\end{equation}
Consequently, there is $v \in W^{1,p}_{0}(\Omega)$ such that
\begin{equation}                                       \label{eq 2.16}
    v_n \rightharpoonup v \ \ \mbox{in} \ W^{1,p}_{0}(\Omega).
\end{equation}
Suppose $v = 0$. Since $\left\{\Psi(v_n)\right\}$ is bounded, it follows that there exists a constant $K > 0$ such that
\begin{equation}                                        \label{eq 2.17}
    K > \Psi(v_n) = I(s_{v_n}v_n) \geq I(sv_n) = \frac{s^p}{p} - \int_{\Omega}F(sv_n) \ dx, \ \ \ \forall \ s > 0.
\end{equation}
Now, by using $(\ref{eq 2.16})$ and passing to the limit as $n \rightarrow +\infty$ in $(\ref{eq 2.17})$, we obtain 
$$
K \geq s^p/p,
$$ 
for all $s > 0$, which is a contradiction. Thus, $ v \neq 0$. Let $t_n=\|u_n\|\to\infty$ and denote by $A:W_0^{1,p}(\Omega)\to W^{-1,p'}(\Omega)$, with $p'=p/(p-1)$, the $p$-Laplacian operator. Dividing the ambient PS equation by $t_n^{p-1}$ gives
\begin{equation}\label{eq 2.19}
A(v_n)-h_n\longrightarrow0\quad\mbox{in }W^{-1,p'},
\qquad h_n:=\frac{f(t_nv_n)}{t_n^{p-1}}.
\end{equation}
We first identify the limit of the source, before passing through the nonlinear operator. On $[v\neq0]$, almost everywhere convergence implies $|t_nv_n|\to\infty$, and therefore
\[
h_n=q(t_nv_n)|v_n|^{p-2}v_n\longrightarrow\eta|v|^{p-2}v.
\]
On $[v=0]$, the same limit is zero because $|h_n|\leq\eta|v_n|^{p-1}$. Moreover,
\begin{equation}\label{eq 2.20}
|h_n|^{p'}\leq\eta^{p'}|v_n|^p,\qquad p'=p/(p-1).
\end{equation}
Strong convergence $v_n\to v$ in $L^p$ makes the right side uniformly integrable. Vitali's theorem consequently yields
\begin{equation}\label{eq 2.21}
h_n\longrightarrow\eta|v|^{p-2}v\quad\mbox{in }L^{p'}(\Omega).
\end{equation}
Testing \eqref{eq 2.19} against $v_n-v$ gives $\langle A(v_n),v_n-v\rangle\to0$. The type $(S_+)$ property of the $p$-Laplacian yields strong convergence $v_n\to v$ in $W_0^{1,p}$. In particular, $\|v\|=1$. Only after this strong convergence do we pass to the limit in \eqref{eq 2.19}, obtaining
\begin{equation}\label{eq 2.22}
\int_\Omega|\nabla v|^{p-2}\nabla v\cdot\nabla w\,dx
=\eta\int_\Omega|v|^{p-2}vw\,dx
\quad\mbox{for every }w\in W_0^{1,p}(\Omega).
\end{equation}
Taking $w=v$, we find
\[
1=\|v\|^p=\eta|v|_p^p.
\]
Thus $v\notin\mathcal S_{\mathcal A}$; since $v_n\in\mathcal S_{\mathcal A}$ converges strongly to $v$, it belongs to the relative boundary. No enumeration of the complete $p$-Laplacian spectrum is used. Proposition \ref{outra} now gives
\[
c=\lim_n\Psi(v_n)\geq\Gamma(v)\geq\inf_{\partial\mathcal S_{\mathcal A}}\Gamma,
\]
contrary to the assumed interval for $c$. This proves that $\{u_n\}$ is bounded.

\medskip

Extract $u_n\rightharpoonup u$ in $W_0^{1,p}$ and $u_n\to u$ in $L^p$. The growth bound $|f(s)|\leq\eta|s|^{p-1}$ and continuity imply $f(u_n)\to f(u)$ in $L^{p'}$. Testing the original PS equation against $u_n-u$ therefore gives
\begin{equation}\label{eq 2.27}
\langle A(u_n),u_n-u\rangle
=\int_\Omega f(u_n)(u_n-u)dx+o(1)\longrightarrow0.
\end{equation}
A second application of $(S_+)$ yields $u_n\to u$ in $W_0^{1,p}$, completing the compactness proof.
\end{proof}

Our main result in this subsection is the following.
\begin{theorem}                         \label{Teo 2.1}
Suppose that $f$ satisfies $(f_1)$-$(f_2)$ and \eqref{eq b} occurs. Then, problem \eqref{Prob P_1} has a ground-state solution with defined sign.
\end{theorem}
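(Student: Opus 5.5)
The plan is to apply Theorem \ref{level} directly, with $X=W^{1,p}_0(\Omega)$ and $\mathcal A$ given by \eqref{eq 2.3}. Lemma \ref{Lem 2.1} shows that $\mathcal A$ is a nonempty open cone. Proposition \ref{Prop 2.1} gives $(A_1)$--$(A_3)$. Proposition \ref{outra} gives hypothesis $(H)$ with $\Gamma$ as in \eqref{gam}, and in particular $c_{\mathcal N}<\inf_{\partial\mathcal S_{\mathcal A}}\Gamma$ by Lemma \ref{Lem 2.4}. Since $c_{\mathcal N}$ lies in the interval $[c_{\mathcal N},\inf_{\partial\mathcal S_{\mathcal A}}\Gamma)$, Proposition \ref{Prop 2.3} yields the $(PS)^{\mathcal N}_{c_{\mathcal N}}$ condition. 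Theorem \ref{level} then provides $u\in\mathcal N$ with $I'(u)=0$ and $I(u)=c_{\mathcal N}>0$. Every nontrivial critical point lies in $\mathcal N$, so $u$ is a ground state.

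The remaining point is the sign, and the plan is the standard $|u|$ argument.

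\textbf{Step 1: $|u|$ also minimizes on $\mathcal N$.} Since $f$ is odd, $F$ is even and $f(s)s$ is even. Also $\||u|\|=\|u\|$ and $\int_\Omega |u|^p\,dx=\int_\Omega |u|^p\,dx$. Hence $|u|\in\mathcal A$, the fibering maps satisfy $\alpha_{|u|}=\alpha_u$, and so $|u|\in\mathcal N$ with $I(|u|)=I(u)=c_{\mathcal N}$. Equivalently, $v=u/\|u\|$ and $|v|$ are both minimizers of $\Psi$ on the open set $\mathcal S_{\mathcal A}$.

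\textbf{Step 2: $|u|$ is a critical point.} $|v|$ is an interior minimizer of the $C^1$ functional $\Psi$ on $\mathcal S_{\mathcal A}$, so it is a critical point of $\Psi$. By Proposition \ref{Abstract}(d), $|u|=m(|v|)$ is a critical point of $I$.

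\textbf{Step 3: $|u|$ is positive.} $|u|\ge 0$ is a nontrivial weak solution of $-\Delta_p w=f(w)$. From $(f_1)$--$(f_2)$ we have $f(s)\ge \alpha s^{p-1}\ge 0$ for $s\ge 0$. Standard $C^{1,\beta}$ regularity and V\'azquez's strong maximum principle then give $|u|>0$ in $\Omega$.

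\textbf{Step 4: conclude the sign of $u$.} I expect this to be the main obstacle, since one has to show that $u$ itself, and not merely $|u|$, does not change sign. The first approach I would try is the following. Both $u$ and $|u|$ are critical, so $u^+$ and $u^-$ each satisfy $\langle I'(u),u^\pm\rangle=0$. If both were nonzero, then $u^\pm\in\mathcal N$, and
\[
c_{\mathcal N}=I(u)=I(u^+)+I(u^-)\ge 2c_{\mathcal N},
\]
which contradicts $c_{\mathcal N}>0$. Hence $u^+=0$ or $u^-=0$. Together with Step 3, this shows that $u$ has a definite sign, and in fact $u>0$ or $u<0$ in $\Omega$.
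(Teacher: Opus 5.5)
Your proposal is correct, and its existence part and Step 4 coincide with the paper's proof. Theorem \ref{level} is applied via Lemma \ref{Lem 2.1} and Propositions \ref{Prop 2.1}, \ref{outra}, \ref{Prop 2.3}, and the sign follows from testing $I'(u)=0$ against $u^{\pm}$ to get $c_{\mathcal N}=I(u^+)+I(-u^-)\geq 2c_{\mathcal N}$, contradicting $c_{\mathcal N}>0$.

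Steps 1--3 are not needed for the stated ``defined sign'': Step 4 uses only $I'(u)=0$, not the criticality of $|u|$. Those steps merely upgrade the conclusion to a strict sign. That strict sign already follows from Step 3 alone, since $u$ is continuous and nonvanishing on the connected set $\Omega$. So Step 4 is not the obstacle you expected.
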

\begin{proof} It is a consequence of Lemma \ref{Lem 2.1}, Propositions \ref{Prop 2.1}, \ref{outra}, \ref{Prop 2.3} and Theorem \ref{level}. Finally, to show that $u$ does not change sign, observe that write $u^+=\max\{u,0\}$ and $u^-=\max\{-u,0\}$. If both are nontrivial, testing the equation against $u^+$ and $-u^-$ gives $u^+,-u^-\in\mathcal N$. Since $f$ is odd, its primitive is even. Furthermore, 
\[c_\mathcal{N} = I(u) = I(u^+) + I(-u^-) \geq 2c_\mathcal{N},\]
which contradicts $c_{\mathcal N}>0$ from Theorem \ref{level}; thus we have $u^{+}=0$ or $u^{-}=0$.
\end{proof}

The following explicit primitive verifies all the assumptions of this subsection with a finite $\beta$. Its two scales make it possible to control $\tau_{\mathcal A}$ independently of the parameter used to enlarge the boundary barrier.
\begin{example}\label{p-explicit-primitive}
Fix $p>1$ and the finite Sobolev exponent $r$ chosen above. With the same $\lambda_1$ and $S_r$, define
\begin{equation}\label{p-example-parameters}
\begin{aligned}
T&=\left(\frac{4\lambda_1|\Omega|}{\mathrm e}\right)^{1/p},\\
D_r&=3\left(\frac{T}{S_r}\right)^p
\left(\frac{3\lambda_1}{S_r^p}\right)^{\frac r{r-p}}
|\Omega|^{\frac{r-p}{r}},
\qquad L=(1+D_r)^{1/p},
\end{aligned}
\end{equation}
where $\mathrm e=\exp(1)$. Then the even primitive
\begin{equation}\label{p-example-primitive}
F_L(s)=\frac{\lambda_1}{p}\left[3|s|^p-2(1-e^{-|s|^p})
-L^p(1-e^{-|s|^p/L^p})\right]
\end{equation}
has an odd continuous derivative $f_L$, satisfies $(f_1)$--$(f_2)$ and \eqref{eq b}, and has
\[
\alpha=0,\qquad \eta=3\lambda_1,\qquad
\beta=\frac{\lambda_1}{p}(2+L^p)<+\infty.
\]
Consequently Theorem \ref{Teo 2.1} applies, with a finite barrier $\Gamma(w)=\beta|[w\neq0]|$.
\end{example}
\begin{proof}
Differentiating \eqref{p-example-primitive} for $s\neq0$ gives
\begin{equation}\label{p-example-reaction}
f_L(s)=\lambda_1|s|^{p-2}s
\left[3-2e^{-|s|^p}-e^{-|s|^p/L^p}\right],\qquad f_L(0)=0.
\end{equation}
Indeed, the three differentiated terms inside the brackets of \eqref{p-example-primitive} are, respectively,
\[
3p|s|^{p-2}s,\qquad
-2p|s|^{p-2}s e^{-|s|^p},\qquad
-p|s|^{p-2}s e^{-|s|^p/L^p}.
\]
Also $F_L(0)=0$ and $|F_L(s)|=O(|s|^p)$ as $s\to0$, so $F_L'(0)=0$ since $p>1$. Formula \eqref{p-example-reaction} shows that $f_L$ is odd and continuous, including at zero. Thus $F_L(s)=\int_0^s f_L(t)\,dt$ on the whole real line.

For $s\neq0$, its even quotient is
\[
q_L(s)=\frac{f_L(s)}{|s|^{p-2}s}
=\lambda_1\left[3-2e^{-|s|^p}-e^{-|s|^p/L^p}\right].
\]
Hence $q_L(s)\to0$ as $s\to0$, $q_L(s)\to3\lambda_1$ as $|s|\to\infty$, and for $s>0$,
\[
q_L'(s)=p\lambda_1s^{p-1}
\left[2e^{-s^p}+L^{-p}e^{-s^p/L^p}\right]>0.
\]
This verifies $(f_1)$--$(f_2)$, with $0<q_L(s)<3\lambda_1$ for $s\neq0$ and $|f_L(s)|\leq3\lambda_1|s|^{p-1}$.

To compute the barrier coefficient, put $z=|s|^p$. Subtracting \eqref{p-example-primitive} from $f_L(s)s/p$ yields
\begin{equation}\label{p-example-defect}
\begin{aligned}
\frac1p f_L(s)s-F_L(s)
=\frac{\lambda_1}{p}\bigg[
&2\big(1-(1+z)e^{-z}\big)\\
&+L^p\big(1-(1+z/L^p)e^{-z/L^p}\big)\bigg].
\end{aligned}
\end{equation}
Since $1-(1+t)e^{-t}\geq0$ for $t\geq0$ and tends to $1$ as $t\to\infty$, the defect is nonnegative and its limit is $\beta=\lambda_1(2+L^p)/p$.

It remains to verify \eqref{eq b}, including its dependence on $\tau_{\mathcal A}$. Let $e_p$ be a first $p$-Laplacian eigenfunction normalized by $\|e_p\|=1$. Then
\[
\int_\Omega|e_p|^pdx=\frac1{\lambda_1},\qquad e_p\in\mathcal S_{\mathcal A},
\]
because $\eta=3\lambda_1$. For every $L>0$,
\[
q_L(s)\geq2\lambda_1(1-e^{-|s|^p}).
\]
The elementary bound $y e^{-T^p y}\leq1/(\mathrm e T^p)$, $y\geq0$, therefore gives
\begin{equation}\label{p-example-scale-bound}
\begin{aligned}
\int_\Omega q_L(Te_p)|e_p|^pdx
&\geq2\lambda_1\int_\Omega(1-e^{-T^p|e_p|^p})|e_p|^pdx\\
&=2-2\lambda_1\int_\Omega|e_p|^pe^{-T^p|e_p|^p}dx\\
&\geq2-\frac{2\lambda_1|\Omega|}{\mathrm e T^p}
=\frac32>1.
\end{aligned}
\end{equation}
The integral on the left is strictly increasing in $T$ and equals $1$ at $s_{e_p}$. Thus $\tau_{\mathcal A}\leq s_{e_p}<T$, uniformly in $L$. With the explicit choice \eqref{p-example-parameters}, the right-hand side of \eqref{eq b} is strictly smaller than
\[
\frac{3\lambda_1}{p}\left(\frac{T}{S_r}\right)^p
\left(\frac{3\lambda_1}{S_r^p}\right)^{\frac r{r-p}}
|\Omega|^{\frac{r-p}{r}}
=\frac{\lambda_1D_r}{p}
<\frac{\lambda_1(3+D_r)}p=\beta.
\]
This proves \eqref{eq b} without prescribing the unknown value of $\tau_{\mathcal A}$. All hypotheses of Theorem \ref{Teo 2.1} are satisfied, and $0<\Gamma(w)\leq\beta|\Omega|<\infty$ for every $w\in\partial\mathcal S_{\mathcal A}$.
\end{proof}

\medskip

The next application complements the study of superlinear elliptic equations with sign-changing weights in \cite{BZ,RQS}. Brown and Zhang \cite{BZ} investigate weighted power reactions through their Nehari geometry, whereas Ramos Quoirin and Silva \cite{RQS} analyze functionals with two degrees of homogeneity and the associated extremal parameter. Here we treat a bounded nonhomogeneous perturbation at a balanced first-eigenvalue configuration and obtain a positive ground-state level below an explicit finite boundary energy. This complements the homogeneous setting by exhibiting a different source of finite boundary barriers.

\subsection{A superlinear problem with a finite boundary barrier}\label{finite-cubic}

The main feature of this application is that $\Gamma\not\equiv+\infty$, and its finite value is realized as a limit of the reduced energy at the boundary of the spherical cone. The mechanism differs from the asymptotically $p$-linear reaction in the preceding subsection: a sign-changing cubic term cancels in a first-eigenfunction direction, where the quadratic energy also vanishes. A bounded nonhomogeneous term then determines the boundary energy. Thus the construction uses spectral degeneracy and an integral balance, while the reaction itself has cubic growth wherever its weight is nonzero.

Let $\Omega\subset\R^N$, $1\leq N\leq3$, be a bounded connected domain with smooth boundary. Let $\lambda_1<\lambda_2$ be its first two distinct Dirichlet eigenvalues and let $e_1>0$ be a first eigenfunction, normalized by $\|e_1\|=1$, where $\|u\|^2=\int_\Omega|\nabla u|^2dx$. Assume that $b\in C(\overline\Omega)\setminus\{0\}$ satisfies
\begin{equation}\label{cubic-balance}
\int_\Omega b(x)e_1^4\,dx=0.
\end{equation}
In particular, $b$ changes sign. Fix $\beta>0$ and consider, for $\mu>0$,
\begin{equation}\label{cubic-problem}\tag{$P_4$}
\begin{cases}
-\Delta u=\lambda_1u-2\beta u e^{-u^2}+\mu b(x)u^3&\text{in }\Omega,\\
u=0&\text{on }\partial\Omega.
\end{cases}
\end{equation}
For $b(x)\neq0$, the reaction $f(x,s)$ in \eqref{cubic-problem} satisfies
\[
\frac{f(x,s)}s=\lambda_1-2\beta e^{-s^2}+\mu b(x)s^2
\longrightarrow
\begin{cases}+\infty,&b(x)>0,\\-\infty,&b(x)<0,\end{cases}
\qquad\text{as}\quad |s|\to\infty.
\]
Therefore the finite barrier below does not arise from a finite asymptotic slope of the reaction.

On $X=H_0^1(\Omega)$, set
\[
Q(u)=\|u\|^2-\lambda_1\|u\|_2^2,
\qquad B(u)=\int_\Omega b(x)u^4\,dx.
\]
The associated functional is
\begin{equation}\label{cubic-energy}
I_\mu(u)=\frac12Q(u)+\beta\int_\Omega(1-e^{-u^2})\,dx-\frac\mu4B(u).
\end{equation}
The compact embedding $X\hookrightarrow L^4(\Omega)$ yields $I_\mu\in C^1(X,\R)$, and its critical points are weak solutions of \eqref{cubic-problem}. The Rayleigh characterization and simplicity of $\lambda_1$ give
\begin{equation}\label{cubic-kernel}
Q(u)\geq0,\qquad Q(u)=0\ \Longleftrightarrow\ u\in\R e_1.
\end{equation}
Define
\[
\mathcal A_b=\{u\in X:B(u)>0\},\qquad
\mathcal S_b=\mathcal A_b\cap\mathcal S,\qquad
\mathcal N_\mu=\{u\in X\setminus\{0\}:I_\mu'(u)u=0\}.
\]
The set $\mathcal A_b$ is a nonempty proper open cone: test functions supported where $b>0$ or $b<0$ give the two signs of $B$.

The following fibering calculation verifies the hypotheses needed to use the same radial parametrization as in the preceding applications.
\begin{proposition}\label{cubic-fibers}
For every $\mu>0$, $X$ and $I_\mu$ satisfy $(A_1)$--$(A_3)$ with the cone $\mathcal A_b$.
\end{proposition}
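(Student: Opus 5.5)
The plan is to verify the three hypotheses directly from the explicit fibering map. The computations are the same in spirit as in Propositions \ref{Prop 3.1} and \ref{Prop 2.1}, with the exponential term playing the role of a positive, decreasing correction.

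For $(A_1)$, $X=H_0^1(\Omega)$ with $\|u\|^2=\int_\Omega|\nabla u|^2dx$ is a Hilbert space. I will take $\varphi(t)=t$, $\psi(u)=\|u\|^2/2$ and $J(u)v=\int_\Omega\nabla u\cdot\nabla v\,dx$. Then $J$ is bounded on bounded sets and $J(w)w=1$ on $\mathcal S$.

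For $(A_2)$, fix $w\neq0$ and compute $\alpha_w(s)=I_\mu(sw)$. This gives
\[
\frac{\alpha_w'(s)}{s}=Q(w)+2\beta\int_\Omega w^2e^{-s^2w^2}\,dx-\mu s^2B(w),\qquad s>0.
\]
The middle term is strictly decreasing in $s$ because $w\neq0$ on a set of positive measure. It tends to $2\beta|w|_2^2$ as $s\to0^+$ and to $0$ as $s\to\infty$, by dominated convergence. Since $Q(w)\geq0$ by \eqref{cubic-kernel}, the normalized derivative is positive near $0$.

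If $w\in\mathcal A_b$, i.e.\ $B(w)>0$, the last term is strictly decreasing and tends to $-\infty$. Hence $\alpha_w'(s)/s$ is strictly decreasing from a positive value to $-\infty$. It therefore has a unique zero $s_w$, with $\alpha_w'>0$ on $(0,s_w)$ and $\alpha_w'<0$ on $(s_w,\infty)$.

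If $w\notin\mathcal A_b$, then $B(w)\leq0$ and the identity gives $\alpha_w'(s)/s\geq 2\beta\int_\Omega w^2e^{-s^2w^2}dx>0$. So no positive critical point exists outside the cone, which is the second part of $(A_2)$.

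For the lower bound in $(A_3)$, I will use the Nehari identity for $u\in\mathcal N_\mu$,
\[
Q(u)+2\beta\int_\Omega u^2e^{-u^2}\,dx=\mu B(u).
\]
Using $e^{-t}\geq1-t$, the left side is at least $\|u\|^2-(\lambda_1-2\beta)|u|_2^2-2\beta|u|_4^4$. By the Poincaré inequality $|u|_2^2\leq\|u\|^2/\lambda_1$, this is bounded below by $c_0\|u\|^2-C\|u\|^4$, where
\[
c_0=\min\{1,\,2\beta/\lambda_1\}>0 .
\]
The right side satisfies $\mu B(u)\leq\mu\|b\|_\infty|u|_4^4\leq C'\|u\|^4$ by the embedding $X\hookrightarrow L^4$ (valid since $N\leq3$). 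Combining, $c_0\|u\|^2\leq C''\|u\|^4$, so $\|u\|\geq\delta>0$ on $\mathcal N_\mu$. By Lemma \ref{Lem 1.2}, this yields $s_w\geq\delta$ for $w\in\mathcal S_b$.

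For the compact upper bound, evaluate the identity at $s=s_w$ with $\|w\|=1$:
\[
\mu s_w^2B(w)=Q(w)+2\beta\int_\Omega w^2e^{-s_w^2w^2}dx\leq 1+2\beta/\lambda_1 .
\]
On a compact $\mathcal K\subset\mathcal S_b$, the continuous positive function $B$ attains a positive minimum $b_{\mathcal K}$. Hence $s_w^2\leq(1+2\beta/\lambda_1)/(\mu b_{\mathcal K})$ for all $w\in\mathcal K$.

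The only step needing some care is the lower bound in $(A_3)$. There $Q$ degenerates along $\mathbb R e_1$, so coercivity near zero must come from the term $2\beta\int_\Omega u^2e^{-u^2}dx$ rather than from $Q$. The argument above handles this by combining the Poincaré inequality with the elementary estimate $e^{-t}\geq 1-t$.
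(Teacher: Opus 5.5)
Your proposal is correct and follows essentially the same route as the paper. It uses the same normalized fiber identity, strict decrease when $B(w)>0$ and strict positivity when $B(w)\leq0$, the lower bound from $e^{-r}\geq1-r$ plus Poincar\'e with $c_0=\min\{1,2\beta/\lambda_1\}$, and the compact upper bound from $\mu s_w^2B(w)\leq Q(w)+2\beta\|w\|_2^2$ together with $\min_{\mathcal K}B>0$.

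Two small points. The fact that $s_ww\in\mathcal N_\mu$ follows directly from $\alpha_w'(s_w)=0$, so you do not need Lemma~\ref{Lem 1.2}, whose hypotheses already include $(A_3)$. And the nonemptiness and openness of $\mathcal A_b$ required in $(A_2)$--$(A_3)$ is the one-line remark the paper makes just before the proposition.
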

\begin{proof}
For $(A_1)$, take $\varphi(t)=t$, $\psi(u)=\|u\|^2/2$ and $J(u)v=\int_\Omega\nabla u\cdot\nabla v\,dx$. For $w\neq0$, let $\alpha_w(t)=I_\mu(tw)$. Then
\begin{equation}\label{cubic-fiber-identity}
\frac{\alpha_w'(t)}t=Q(w)+2\beta\int_\Omega w^2e^{-t^2w^2}\,dx-\mu t^2B(w).
\end{equation}
If $B(w)>0$, the right-hand side is strictly decreasing from $Q(w)+2\beta\|w\|_2^2>0$ to $-\infty$. It has a unique zero $s_w$, with the strict signs required in $(A_2)$. If $B(w)\leq0$, it is strictly positive for every $t>0$, because $w\neq0$ and $Q(w)\geq0$. Thus no ray outside $\mathcal A_b$ meets $\mathcal N_\mu$.

For $(A_3)$, the inequality $e^{-r}\geq1-r$, $r\geq0$, gives
\[
I_\mu'(u)u\geq Q(u)+2\beta\|u\|_2^2
-(2\beta+\mu\|b\|_\infty)\|u\|_4^4
\geq c_0\|u\|^2-C_\mu\|u\|^4,
\]
where $c_0=\min\{1,2\beta/\lambda_1\}>0$, by Poincar\'e's inequality. Hence $\|u\|\geq\sqrt{c_0/C_\mu}$ on $\mathcal N_\mu$, which gives the uniform lower bound for $s_w$ on $\mathcal S_b$. At the fibering maximum,
\begin{equation}\label{cubic-nehari-scale}
\mu s_w^2B(w)=Q(w)+2\beta\int_\Omega w^2e^{-s_w^2w^2}\,dx
\leq Q(w)+2\beta\|w\|_2^2.
\end{equation}
For a compact $\mathcal K\subset\mathcal S_b$, continuity gives $\min_\mathcal K B>0$, while the right-hand side is bounded. This proves the required upper bound on $\mathcal K$.
\end{proof}

Write $m_\mu(w)=s_ww$, $\Psi_\mu(w)=I_\mu(m_\mu(w))$ for $w\in\mathcal S_b$, and $c_\mu=\inf_{\mathcal N_\mu}I_\mu$. Define
\begin{equation}\label{cubic-defect}
k_\beta(s)=\beta\left[1-\left(1+\frac{s^2}{2}\right)e^{-s^2}\right].
\end{equation}
Direct calculation gives $0\leq k_\beta(s)\leq\beta$, $k_\beta(s)\to\beta$ as $|s|\to\infty$, and
\begin{equation}\label{cubic-energy-on-nehari}
I_\mu(u)-\frac14I_\mu'(u)u
=\frac14Q(u)+\int_\Omega k_\beta(u)\,dx.
\end{equation}
In particular, $I_\mu(u)>0$ on $\mathcal N_\mu$.

The next result identifies the barrier and proves that its finite value is sharp. This also shows why the choice $\Gamma\equiv+\infty$ is unavailable in this application.
\begin{proposition}\label{cubic-boundary}
The directions $\pm e_1$ belong to $\partial\mathcal S_b$. The map
\begin{equation}\label{cubic-gamma}
\Gamma_b(w)=
\begin{cases}
\beta|\Omega|,&w=\pm e_1,\\
+\infty,&w\in\partial\mathcal S_b\setminus\{\pm e_1\},
\end{cases}
\end{equation}
is lower semicontinuous and satisfies the boundary inequality in $(H)$. Moreover, for each fixed $\mu>0$ there are $w_n\in\mathcal S_b$ such that $w_n\to e_1$ and $\Psi_\mu(w_n)\to\beta|\Omega|$.
\end{proposition}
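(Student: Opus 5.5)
The proof has four steps: show that $\pm e_1$ are boundary directions, check that $\Gamma_b$ is lower semicontinuous, prove the boundary inequality from the Nehari identity \eqref{cubic-nehari-scale} and the energy identity \eqref{cubic-energy-on-nehari}, and finally exhibit an explicit approximating sequence that attains the value $\beta|\Omega|$.

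\emph{Boundary directions.} By \eqref{cubic-balance}, $B(\pm e_1)=0$, so $\pm e_1\notin\mathcal S_b$. Choose $\varphi\in C_c^\infty(\Omega)$ supported where $b>0$, with $\varphi\geq0$ and $\varphi\not\equiv0$. Since $e_1>0$, this gives $\int_\Omega b e_1^3\varphi\,dx>0$. Put $w_t=(e_1+t\varphi)/\|e_1+t\varphi\|$. Expanding the quartic gives
\[
B(e_1+t\varphi)=4t\int_\Omega be_1^3\varphi\,dx+O(t^2),
\]
so $w_t\in\mathcal S_b$ for small $t>0$, and $w_t\to e_1$. Because $B$ is even, $-w_t\to-e_1$ handles $-e_1$, and hence $\pm e_1\in\partial\mathcal S_b$.

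\emph{Lower semicontinuity.} The sublevel sets $\{\Gamma_b\leq t\}$ are empty for $t<\beta|\Omega|$ and equal to the finite closed set $\{\pm e_1\}$ for $t\geq\beta|\Omega|$. Closed sublevel sets give lower semicontinuity of $\Gamma_b$.

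\emph{Boundary inequality in $(H)$.} Let $w_n\in\mathcal S_b$ with $w_n\to w\in\partial\mathcal S_b$, so $\|w\|=1$ and $B(w)=0$; write $s_n=s_{w_n}$.

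First I show $s_n\to\infty$. Suppose instead that $s_n\to s_0\in(0,\infty)$ along a subsequence. Passing to the limit in \eqref{cubic-nehari-scale} gives
\[
0=\mu s_0^2B(w)=Q(w)+2\beta\int_\Omega w^2e^{-s_0^2w^2}\,dx>0,
\]
a contradiction; this uses the compact embedding into $L^4$ and dominated convergence. The lower bound in $(A_3)$ excludes $s_0=0$.

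By \eqref{cubic-energy-on-nehari},
\[
\Psi_\mu(w_n)=\tfrac14 s_n^2Q(w_n)+\int_\Omega k_\beta(s_nw_n)\,dx.
\]
If $w\neq\pm e_1$, then \eqref{cubic-kernel} gives $Q(w_n)\to Q(w)>0$, and $k_\beta\geq0$ yields $\Psi_\mu(w_n)\to+\infty=\Gamma_b(w)$. If $w=\pm e_1$, discard the nonnegative term $\tfrac14 s_n^2Q(w_n)$. Along an a.e.\ convergent subsequence realizing the liminf, $|s_nw_n|\to\infty$ a.e. on $[e_1\neq0]=\Omega$, so $k_\beta(s_nw_n)\to\beta$ a.e. Fatou's lemma then gives $\liminf\Psi_\mu(w_n)\geq\beta|\Omega|$.

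\emph{Sharpness.} Take $w_n=w_{t_n}$ with $t_n\to0^+$. Since $Q$ is quadratic and vanishes on $\R e_1$, we have $Q(w_t)=t^2Q(\varphi)/\|e_1+t\varphi\|^2=O(t^2)$. From \eqref{cubic-nehari-scale},
\[
s_{w_t}^2\leq\frac{Q(w_t)+2\beta\|w_t\|_2^2}{\mu B(w_t)}=O(1/t).
\]
Hence $s_{w_t}^2Q(w_t)=O(t)\to0$. Together with $k_\beta\leq\beta$ this gives $\limsup\Psi_\mu(w_n)\leq\beta|\Omega|$. The reverse inequality comes from the previous step, so $\Psi_\mu(w_n)\to\beta|\Omega|$.

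The main obstacle is this last step: the blow-up $s_{w_t}\to\infty$ must be slow enough to be beaten by the quadratic degeneracy $Q=O(t^2)$. The first-order growth of $B$ along the chosen direction $\varphi$ is exactly what secures $s^2=O(1/t)$.
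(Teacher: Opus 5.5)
Your proposal is correct and follows the paper's proof essentially step by step: you perturb $e_1$ in a direction with $\int_\Omega b e_1^3\varphi\,dx>0$, force $s_{w_n}\to\infty$ via \eqref{cubic-nehari-scale}, split into $Q(w)>0$ versus $w=\pm e_1$ with Fatou, and beat $s_{w_t}^2=O(1/t)$ with $Q(w_t)=O(t^2)$; your squeeze, using $k_\beta\le\beta$ for the upper bound and the already-proved boundary inequality for the lower bound, is only a cosmetic substitute for the paper's dominated-convergence step. The one justification to tighten is $Q(e_1+t\varphi)=t^2Q(\varphi)$: vanishing of $Q$ on $\mathbb{R}e_1$ alone does not remove the cross term $2t\bigl(\int_\Omega\nabla e_1\cdot\nabla\varphi\,dx-\lambda_1\int_\Omega e_1\varphi\,dx\bigr)$, which vanishes here by the eigenfunction equation (equivalently, because $Q\ge0$ attains its minimum at $e_1$).
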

\begin{proof}
Choose $v\in X$ such that $d:=B'(e_1)v=4\int_\Omega b e_1^3v\,dx>0$. Such a choice exists since $b e_1^3\not\equiv0$. For small $\varepsilon>0$, set
\[
w_\varepsilon=\frac{e_1+\varepsilon v}{\|e_1+\varepsilon v\|}.
\]
The balance \eqref{cubic-balance} and the eigenfunction equation yield
\begin{equation}\label{cubic-boundary-perturbation}
B(w_\varepsilon)=d\varepsilon+O(\varepsilon^2)>0,
\qquad
Q(w_\varepsilon)=\frac{\varepsilon^2Q(v)}{\|e_1+\varepsilon v\|^2}=O(\varepsilon^2).
\end{equation}
Thus $e_1\in\partial\mathcal S_b$; evenness of $B$ gives the same conclusion for $-e_1$.

Now suppose $w_n\in\mathcal S_b$ and $w_n\to w\in\partial\mathcal S_b$. Continuity implies $B(w)=0$. We claim that $s_{w_n}\to\infty$. Otherwise $(A_3)$ gives a subsequence $s_{w_n}\to s\in(0,\infty)$, and continuity gives $I_\mu'(sw)(sw)=0$. This contradicts \eqref{cubic-fiber-identity} with $B(w)=0$.

If $w\neq\pm e_1$, then $Q(w)>0$ by \eqref{cubic-kernel}. Hence, using \eqref{cubic-energy-on-nehari},
\[
\Psi_\mu(w_n)\geq\frac14s_{w_n}^2Q(w_n)\longrightarrow+\infty.
\]
If $w=\pm e_1$, take a subsequence realizing the energy liminf, and then an almost everywhere convergent subsequence of $w_n$. Positivity of $e_1$ implies $|s_{w_n}w_n|\to\infty$ almost everywhere. Fatou's lemma yields
\[
\liminf_n\Psi_\mu(w_n)\geq\beta|\Omega|=\Gamma_b(w).
\]
Lower semicontinuity of \eqref{cubic-gamma} follows because its finite-value set $\{\pm e_1\}$ is closed and its finite value is constant.

Finally, take the directions $w_\varepsilon$ above. By \eqref{cubic-nehari-scale} and \eqref{cubic-boundary-perturbation}, $s_{w_\varepsilon}^2\leq C/\varepsilon$, so
\[
Q(m_\mu(w_\varepsilon))=s_{w_\varepsilon}^2Q(w_\varepsilon)\longrightarrow0.
\]
Also $s_{w_\varepsilon}\to\infty$. Along any sequence $\varepsilon\downarrow0$, an almost everywhere subsequence and dominated convergence, using $0\leq k_\beta\leq\beta$, give
\[
\int_\Omega k_\beta(m_\mu(w_\varepsilon))\,dx\longrightarrow\beta|\Omega|.
\]
Every such sequence admits this extraction, so \eqref{cubic-energy-on-nehari} gives the full limit $\Psi_\mu(w_\varepsilon)\to\beta|\Omega|$.
\end{proof}

In particular, the boundary is nonempty and the reduced energy fails to diverge there, unlike the boundary behavior in \cite[Section~3.4, Lemma~26]{SW}. Thus that boundary-divergence argument does not cover this application. Proposition \ref{cubic-boundary} also shows that $I_\mu$ is not coercive on $\mathcal N_\mu$.

Compactness below the finite barrier follows from the spectral gap, without requiring coercivity on the whole Nehari set.
\begin{proposition}\label{cubic-ps}
For every $\mu>0$, $I_\mu$ satisfies $(PS)^{\mathcal N_\mu}_c$ at each level $c<\beta|\Omega|$.
\end{proposition}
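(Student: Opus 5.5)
Let $u_n\in\mathcal N_\mu$ be an ambient Palais--Smale sequence with $I_\mu(u_n)\to c<\beta|\Omega|$ and $\|I_\mu'(u_n)\|_{X'}\to0$. The plan has two steps: first show that $\{u_n\}$ is bounded in $X$, then obtain strong convergence by the standard subcritical argument.

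\textbf{Step 1: boundedness, by contradiction through the spectral gap.} Suppose, along a subsequence, that $t_n:=\|u_n\|\to\infty$, and put $v_n=u_n/t_n=m_\mu^{-1}(u_n)\in\mathcal S_b$. By \eqref{cubic-energy-on-nehari} and $u_n\in\mathcal N_\mu$,
\[
I_\mu(u_n)=\tfrac14 t_n^2Q(v_n)+\int_\Omega k_\beta(u_n)\,dx ,
\]
and both terms are nonnegative. Hence $Q(v_n)\leq 4(c+1)/t_n^2\to0$.

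Next decompose $v_n=a_ne_1+z_n$ with $z_n\perp e_1$ in $X$. The gap $\lambda_1<\lambda_2$ gives
\[
Q(v_n)=Q(z_n)\geq\left(1-\frac{\lambda_1}{\lambda_2}\right)\|z_n\|^2 .
\]
Therefore $z_n\to0$ in $X$ and $a_n^2\to1$, so along a subsequence $v_n\to\pm e_1$ strongly in $X$. Since $B(\pm e_1)=0$ by \eqref{cubic-balance}, the limit lies in $\partial\mathcal S_b$.

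Proposition~\ref{cubic-boundary}, the boundary inequality of $(H)$ with $\Gamma_b(\pm e_1)=\beta|\Omega|$, then yields
\[
c=\lim_n\Psi_\mu(v_n)\geq\beta|\Omega| .
\]
This contradicts $c<\beta|\Omega|$, so $\{u_n\}$ is bounded.

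Alternatively, one can avoid invoking $(H)$: since $v_n\to\pm e_1$ a.e. along a subsequence, $e_1>0$ and $t_n\to\infty$, we have $|u_n|\to\infty$ a.e. Fatou's lemma gives $\liminf\int_\Omega k_\beta(u_n)\,dx\geq\beta|\Omega|$, and this already contradicts $c<\beta|\Omega|$.

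\textbf{Step 2: strong convergence.} Extract $u_n\rightharpoonup u$ in $H_0^1(\Omega)$ and $u_n\to u$ in $L^2\cap L^4$; this uses $N\leq3$, so the embedding into $L^4$ is compact. Testing $I_\mu'(u_n)$ against $u_n-u$ gives
\[
\|u_n-u\|^2=I_\mu'(u_n)(u_n-u)+\lambda_1\int_\Omega u_n(u_n-u)\,dx-2\beta\int_\Omega u_ne^{-u_n^2}(u_n-u)\,dx+\mu\int_\Omega b\,u_n^3(u_n-u)\,dx+o(1).
\]
The $o(1)$ term is the weak pairing $\langle u,u_n-u\rangle\to0$. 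Every other term on the right tends to zero:
\begin{itemize}
\item $|I_\mu'(u_n)(u_n-u)|\leq\|I_\mu'(u_n)\|\,\|u_n-u\|\to0$, since the sequence is bounded;
\item the $L^2$ term vanishes by strong $L^2$ convergence;
\item the exponential term is bounded by $|u_n|\,|u_n-u|$, so it also vanishes by $L^2$ convergence;
\item the cubic term satisfies $\bigl|\int_\Omega b\,u_n^3(u_n-u)\,dx\bigr|\leq\|b\|_\infty|u_n|_4^3|u_n-u|_4\to0$.
\end{itemize}
Hence $u_n\to u$ in $X$.

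\textbf{Main obstacle.} The delicate point is Step 1, namely showing that an unbounded Nehari sequence must concentrate in the degenerate direction $\pm e_1$. The quadratic part $Q$ controls only the component orthogonal to $e_1$, so the spectral gap is exactly what forces the normalized sequence to converge strongly to $\pm e_1$. Once that is established, the finite barrier $\beta|\Omega|$ computed in Proposition~\ref{cubic-boundary} closes the argument.
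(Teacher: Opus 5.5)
Your proof is correct and follows the paper's argument. The paper also bounds $Q(u_n)$ via \eqref{cubic-energy-on-nehari}, decomposes $u_n=a_ne_1+z_n$ so that the spectral gap forces an unbounded sequence into the direction $e_1$, applies Fatou's lemma (your ``alternative'' route) to reach $c\geq\beta|\Omega|$, and finishes by testing against $u_n-u$ with the compact embedding into $L^2\cap L^4$; the only cosmetic differences are that you normalize first and may invoke Proposition~\ref{cubic-boundary}, whereas the paper decomposes $u_n$ itself and applies Fatou directly.
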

\begin{proof}
Let $u_n\in\mathcal N_\mu$, $I_\mu(u_n)\to c<\beta|\Omega|$ and $I_\mu'(u_n)\to0$ in $X'$. By \eqref{cubic-energy-on-nehari}, $Q(u_n)$ is bounded. Decompose $u_n=a_ne_1+z_n$ with $\int_\Omega z_ne_1dx=0$. The spectral gap gives
\[
Q(u_n)=Q(z_n)\geq(1-\lambda_1/\lambda_2)\|z_n\|^2,
\]
so $z_n$ is bounded. If $\|u_n\|\to\infty$ along a subsequence, then $|a_n|\to\infty$ and $u_n/a_n\to e_1$ in $X$. After an almost everywhere extraction, $|u_n|\to\infty$ almost everywhere. Therefore \eqref{cubic-energy-on-nehari} and Fatou's lemma imply $c\geq\beta|\Omega|$, a contradiction. The sequence is bounded.

Up to a subsequence, $u_n\rightharpoonup u$ in $X$ and $u_n\to u$ in $L^2(\Omega)\cap L^4(\Omega)$. The map $s\mapsto s e^{-s^2}$ is globally Lipschitz, and $u_n^3\to u^3$ in $L^{4/3}(\Omega)$. Testing $I_\mu'(u_n)$ against $u_n-u$ consequently yields
\[
\int_\Omega\nabla u_n\cdot\nabla(u_n-u)\,dx\longrightarrow0.
\]
Weak convergence then gives $\|u_n-u\|\to0$, as required.
\end{proof}

To separate the minimizing energy from the barrier, fix $w_0\in\mathcal A_b$ and define
\begin{equation}\label{cubic-threshold}
A_\beta(w_0)=\frac12Q(w_0)+\beta\|w_0\|_2^2,
\qquad
\mu_*(w_0)=\frac{A_\beta(w_0)^2}{\beta|\Omega|B(w_0)}.
\end{equation}
Since $1-e^{-r}\leq r$ for $r\geq0$, Proposition \ref{Prop 1.2}(d) gives
\begin{equation}\label{cubic-gap}
c_\mu\leq\max_{t>0}I_\mu(tw_0)
\leq\max_{t>0}\left(A_\beta(w_0)t^2-\frac\mu4B(w_0)t^4\right)
=\frac{A_\beta(w_0)^2}{\mu B(w_0)}.
\end{equation}
Thus $\mu>\mu_*(w_0)$ guarantees the strict separation in $(H)$.

\begin{theorem}\label{cubic-ground-state}
Suppose \eqref{cubic-balance} holds and $\beta>0$. Fix any $w_0\in\mathcal A_b$. If $\mu>\mu_*(w_0)$, problem \eqref{cubic-problem} has a ground-state solution $u_\mu$ satisfying
\[
0<I_\mu(u_\mu)=c_\mu
=\inf_{w\in\mathcal A_b}\max_{t>0}I_\mu(tw)
<\beta|\Omega|=\inf_{\partial\mathcal S_b}\Gamma_b.
\]
\end{theorem}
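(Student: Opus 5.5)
The plan is to apply Theorem \ref{level} with $X=H_0^1(\Omega)$, the cone $\mathcal A=\mathcal A_b$, the spherical cone $\mathcal S_{\mathcal A}=\mathcal S_b$ and the barrier $\Gamma=\Gamma_b$ from \eqref{cubic-gamma}. Each hypothesis is supplied by a result already established in this subsection.

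\begin{enumerate}
\item Proposition \ref{cubic-fibers} gives $(A_1)$--$(A_3)$ for every $\mu>0$. Hence Lemma \ref{Lem 1.2}, Proposition \ref{homeo} and Proposition \ref{Abstract} apply. In particular $c_\mu=\inf_{\mathcal S_b}\Psi_\mu$.

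\item For the barrier, Proposition \ref{cubic-boundary} shows that $\Gamma_b$ is lower semicontinuous and satisfies the boundary inequality $\liminf_n\Psi_\mu(w_n)\geq\Gamma_b(w)$ whenever $w_n\in\mathcal S_b$ converges to $w\in\partial\mathcal S_b$. It also shows $\pm e_1\in\partial\mathcal S_b$. Since $\Gamma_b$ equals $\beta|\Omega|$ at $\pm e_1$ and $+\infty$ elsewhere on the boundary, $\inf_{\partial\mathcal S_b}\Gamma_b=\beta|\Omega|$.

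\item For the strict separation in $(H)$, use \eqref{cubic-gap}: $c_\mu\leq A_\beta(w_0)^2/(\mu B(w_0))$. The assumption $\mu>\mu_*(w_0)$, unwound through \eqref{cubic-threshold}, is exactly the inequality $A_\beta(w_0)^2/(\mu B(w_0))<\beta|\Omega|$. Therefore $c_\mu<\beta|\Omega|=\inf_{\partial\mathcal S_b}\Gamma_b$.

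One detail in \eqref{cubic-gap} should be checked. The bound uses $\beta\int(1-e^{-t^2w_0^2})\leq\beta t^2\|w_0\|_2^2$, which is where the $\beta\|w_0\|_2^2$ term in $A_\beta$ comes from. It also uses that the maximum of $At^2-\frac\mu4Bt^4$ over $t>0$ equals $A^2/(\mu B)$: the maximiser is $t^2=2A/(\mu B)$, and the value there is $2A^2/(\mu B)-A^2/(\mu B)$. This is routine.

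\item For compactness, Proposition \ref{cubic-ps} gives $(PS)^{\mathcal N_\mu}_c$ for every $c<\beta|\Omega|$. By the previous step this includes $c=c_\mu$.
\end{enumerate}

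With these four inputs, Theorem \ref{level} produces a nontrivial critical point $u_\mu$ with $I_\mu(u_\mu)=c_\mu>0$. It also gives the minimax identity $c_\mu=\inf_{w\in\mathcal A_b}\max_{t>0}I_\mu(tw)$. Positivity follows independently from \eqref{cubic-energy-on-nehari}, which gives $I_\mu>0$ on $\mathcal N_\mu$. Since every nonzero critical point lies in $\mathcal N_\mu$, $u_\mu$ is a ground state, and it is a weak solution of \eqref{cubic-problem}.

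There is no genuine obstacle, since the substantive work sits in Propositions \ref{cubic-boundary} and \ref{cubic-ps}. The one point needing care is that $(PS)^{\mathcal N_\mu}$ is only available below $\beta|\Omega|$. So the strict inequality $c_\mu<\beta|\Omega|$ from the separation step must be established before Proposition \ref{cubic-ps} is invoked at the level $c_\mu$.
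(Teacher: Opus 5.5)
Your proposal is correct and follows the paper's own proof. Both apply Theorem~\ref{level} with Propositions~\ref{cubic-fibers}, \ref{cubic-boundary} and \ref{cubic-ps}, together with \eqref{cubic-gap}; under $\mu>\mu_*(w_0)$, the latter gives the strict separation $c_\mu<\beta|\Omega|=\inf_{\partial\mathcal S_b}\Gamma_b$. Your extra checks are accurate and only make explicit what the paper leaves implicit: the maximum of $At^2-\frac\mu4Bt^4$, and establishing this separation before invoking compactness at the level $c_\mu$.
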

\begin{proof}
Propositions \ref{cubic-fibers}, \ref{cubic-boundary} and \ref{cubic-ps}, together with \eqref{cubic-gap}, verify all hypotheses of Theorem \ref{level}. Every nonzero critical point belongs to $\mathcal N_\mu$, so the resulting minimizer is a ground state.
\end{proof}

The balance condition has an explicit realization on a ball.
\begin{example}\label{cubic-ball}
Let $\Omega=B_1(0)\subset\R^3$ and $b(x)=x_1$. The first eigenfunction is radial, so \eqref{cubic-balance} follows by symmetry. For $0<\varepsilon<1$, let $w_0=(1+\varepsilon x_1)e_1$. Then
\[
B(w_0)=4\varepsilon\int_\Omega x_1^2e_1^4\,dx
+4\varepsilon^3\int_\Omega x_1^4e_1^4\,dx>0.
\]
Consequently Theorem \ref{cubic-ground-state} applies whenever $\mu>\mu_*(w_0)$. Here $f(x,s)/s$ has no finite limit at infinity for almost every $x\in\Omega$, while the finite boundary energy is $\beta|\Omega|$.
\end{example}

The next application complements related studies of quasilinear equations with indefinite weights. Omari and Sovrano \cite{OSlog,OSsuper} treat the mean-curvature operator, with logistic and superlinear reactions, respectively. Qi and Zhang \cite{QZ} use Nehari fibering for nonhomogeneous energies depending jointly on $u$ and $\nabla u$. Our operator has a different structure: a uniformly elliptic Laplacian is perturbed by a bounded, gradient-dependent energy at a balanced first-eigenvalue configuration. The result below concerns this configuration and an explicit finite boundary threshold; it does not assert an extension of the theorems for those different operators.

\subsection{A quasilinear problem with a geometric finite barrier}\label{gradient-barrier}

Here again $\Gamma\not\equiv+\infty$, but its finite value is no longer $\beta|\Omega|$ or a constant saturation value times a support measure. Instead, it retains the spatial profile of a first eigenfunction and its gradient. The spectral balance from Subsection~\ref{finite-cubic} is combined with a bounded perturbation involving the gradient. The cubic reaction remains superlinear wherever $b\neq0$; the finite barrier arises from this gradient-dependent saturation, rather than from an asymptotically linear reaction.

Retain $\Omega$, $\lambda_1<\lambda_2$, $e_1$, $Q$, $B$ and $\mathcal A_b$ from Subsection~\ref{finite-cubic}, including \eqref{cubic-balance}, and fix $0<\beta<2$. On $X=H_0^1(\Omega)$ set
\begin{equation}\label{gradient-energy}
\mathcal I_\mu(u)=\frac12Q(u)+\int_\Omega h_\beta(u,\nabla u)\,dx-\frac\mu4B(u),
\qquad h_\beta(s,\xi)=\frac{\beta|\xi|^2}{1+s^2+|\xi|^2}.
\end{equation}
Writing $D=1+s^2+|\xi|^2$, its derivatives are
\begin{equation}\label{gradient-derivatives}
(h_\beta)_s=-\frac{2\beta s|\xi|^2}{D^2},\qquad
(h_\beta)_\xi=\frac{2\beta(1+s^2)\xi}{D^2}.
\end{equation}
The estimates $|(h_\beta)_s|\leq\beta/4$ and $|(h_\beta)_\xi|\leq\beta$ show that both derivatives are bounded and continuous. Dominated convergence for these derivatives, together with $X\hookrightarrow L^4(\Omega)$, gives $\mathcal I_\mu\in C^1(X,\R)$. Its critical points solve
\begin{equation}\label{gradient-problem}\tag{$P_5$}
\begin{cases}
-\div\left[\left(1+\dfrac{2\beta(1+u^2)}{D_u^2}\right)\nabla u\right]
=\lambda_1u+\dfrac{2\beta u|\nabla u|^2}{D_u^2}+\mu b(x)u^3
&\text{in }\Omega,\\
u=0&\text{on }\partial\Omega,
\end{cases}
\qquad D_u=1+u^2+|\nabla u|^2.
\end{equation}
The full flux is not bounded: only the energy perturbation saturates. To verify uniform ellipticity, put
$A_\beta(s,\xi)=\xi+(h_\beta)_\xi(s,\xi)$ and $c=1+s^2$. The tangential and radial eigenvalues of $(h_\beta)_{\xi\xi}$ are, respectively,
\[
\frac{2\beta c}{(c+|\xi|^2)^2},\qquad
\frac{2\beta c(c-3|\xi|^2)}{(c+|\xi|^2)^3}.
\]
Since $(1-3y)/(1+y)^3\geq-1/4$ for $y\geq0$, these give
\begin{equation}\label{gradient-monotonicity}
\big(A_\beta(s,\xi)-A_\beta(s,\eta)\big)\cdot(\xi-\eta)
\geq(1-\beta/2)|\xi-\eta|^2.
\end{equation}

\begin{proposition}\label{gradient-fibers}
For each $\mu>0$, $X$ and $\mathcal I_\mu$ satisfy $(A_1)$--$(A_3)$ with the cone $\mathcal A_b$.
\end{proposition}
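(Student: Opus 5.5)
The plan follows Proposition \ref{cubic-fibers} closely, with the exponential term replaced by the gradient saturation $h_\beta$. For $(A_1)$ we take the same data as before: $\varphi(t)=t$, $\psi(u)=\|u\|^2/2$ and $J(u)v=\int_\Omega\nabla u\cdot\nabla v\,dx$.

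For $w\neq0$ and $\alpha_w(t)=\mathcal I_\mu(tw)$, we compute $\frac{d}{dt}h_\beta(tw,t\nabla w)$ from \eqref{gradient-derivatives}. Writing $D_t=1+t^2w^2+t^2|\nabla w|^2$, the $s$- and $\xi$-contributions combine to give
\[
\frac{\alpha_w'(t)}{t}=Q(w)+2\beta\int_\Omega\frac{|\nabla w|^2}{D_t^2}\,dx-\mu t^2B(w).
\]
Indeed, $-2\beta t^2w^2|\nabla w|^2/D_t^2+2\beta(1+t^2w^2)|\nabla w|^2/D_t^2=2\beta|\nabla w|^2/D_t^2$. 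We then check three properties of the middle term. It is strictly decreasing in $t$, since $D_t$ is increasing and strictly so where $\nabla w\neq0$, a set of positive measure. It tends to $2\beta\|w\|^2>0$ as $t\to0^+$, by dominated convergence. It is nonnegative.

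If $B(w)>0$, the right-hand side is strictly decreasing from $Q(w)+2\beta\|w\|^2>0$ to $-\infty$. This gives a unique zero $s_w$ with the sign pattern required in $(A_2)$. If $B(w)\le0$, the right-hand side is at least $Q(w)+2\beta\int_\Omega|\nabla w|^2/D_t^2\,dx>0$ by \eqref{cubic-kernel}. So there are no critical points outside $\mathcal A_b$, which completes $(A_2)$.

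For the lower bound in $(A_3)$, we need $\mathcal I_\mu'(u)u\ge c_0\|u\|^2-C\|u\|^4$. Since $Q\ge0$ is degenerate along $e_1$, we must extract positivity from the $h_\beta$ term, and this is the main obstacle. That term equals $2\beta\int_\Omega|\nabla u|^2/D_u^2\,dx$, where $D_u=1+u^2+|\nabla u|^2$. Using $1/D^2\ge1-2(u^2+|\nabla u|^2)$ would produce $\int|\nabla u|^4$, which is not controlled in $H_0^1$. So we argue by contradiction instead. Suppose $u_n\in\mathcal N_\mu$ with $\|u_n\|\to0$, and set $w_n=u_n/\|u_n\|$. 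The Nehari identity reads
\[
Q(w_n)+2\beta\int_\Omega\frac{|\nabla w_n|^2}{D_{u_n}^2}\,dx=\mu\|u_n\|^2B(w_n)\le C\|u_n\|^2\to0.
\]
Hence $Q(w_n)\to0$, and by the spectral gap $w_n\to\pm e_1$ strongly along a subsequence. Since $\nabla u_n\to0$ in $L^2$, after extraction $D_{u_n}\to1$ in measure, while $|\nabla w_n|^2\to|\nabla e_1|^2$ in $L^1$. Fatou's lemma then gives $\liminf_n\int_\Omega|\nabla w_n|^2/D_{u_n}^2\,dx\ge\|e_1\|^2=1$, a contradiction. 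This yields $\inf_{\mathcal S_b}s_w>0$.

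For the upper bound on a compact $\mathcal K\subset\mathcal S_b$, evaluating the fiber identity at $s_w$ gives
\[
\mu s_w^2B(w)=Q(w)+2\beta\int_\Omega\frac{|\nabla w|^2}{D_{s_w}^2}\,dx\le Q(w)+2\beta.
\]
Together with $\min_{\mathcal K}B>0$, this bounds $s_w$ on $\mathcal K$ exactly as in \eqref{cubic-nehari-scale}.
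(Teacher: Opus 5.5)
Your proposal is correct and follows the paper's proof essentially step by step. It uses the same fiber identity, the same monotonicity argument for $(A_2)$, and the same contradiction argument for the lower bound in $(A_3)$ (Nehari identity divided by the squared scale, $Q(w_n)\to0$, spectral gap, strong convergence to $\pm e_1$), closed by the same scale bound $\mu s_w^2B(w)\le Q(w)+2\beta$ for the upper bound. The only cosmetic difference is that you finish the lower-bound contradiction with Fatou's lemma after an a.e.\ extraction, whereas the paper replaces $|\nabla w_n|^2$ by $|\nabla w|^2$ and uses dominated convergence.
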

\begin{proof}
The Hilbert-space verification of $(A_1)$ is unchanged. Put $R_w=w^2+|\nabla w|^2$. For $\alpha_w(t)=\mathcal I_\mu(tw)$,
\begin{equation}\label{gradient-fiber}
\frac{\alpha_w'(t)}t=Q(w)+2\beta\int_\Omega
\frac{|\nabla w|^2}{(1+t^2R_w)^2}\,dx-\mu t^2B(w).
\end{equation}
For $B(w)>0$ this is strictly decreasing from $Q(w)+2\beta\|w\|^2>0$ to $-\infty$. If $B(w)\leq0$ and $w\neq0$, it is positive, since $Q(w)\geq0$ and $\nabla w\not\equiv0$. This proves $(A_2)$.

For the uniform lower bound in $(A_3)$, suppose $w_n\in\mathcal S_b$ and $s_n=s_{w_n}\to0$. Dividing the Nehari identity by $s_n^2$ gives
\begin{equation}\label{gradient-small-scale}
Q(w_n)+2\beta\int_\Omega\frac{|\nabla w_n|^2}{(1+s_n^2R_{w_n})^2}\,dx
=\mu s_n^2B(w_n)\longrightarrow0.
\end{equation}
Here $B(w_n)$ is bounded by the $L^4$ embedding. Hence $Q(w_n)\to0$. By the spectral gap and $\|w_n\|=1$, a subsequence converges strongly in $X$ to $w=\pm e_1$. After an almost everywhere extraction for both functions and gradients, the integral in \eqref{gradient-small-scale} tends to $\|w\|^2=1$: replace $|\nabla w_n|^2$ by $|\nabla w|^2$ with an $L^1$ error tending to zero, then use dominated convergence. This contradicts \eqref{gradient-small-scale}. Finally, \eqref{gradient-fiber} at its zero yields
\begin{equation}\label{gradient-scale-bound}
\mu s_w^2B(w)\leq Q(w)+2\beta\|w\|^2.
\end{equation}
The positive minimum of $B$ on each compact subset of $\mathcal S_b$ gives the upper bound in $(A_3)$.
\end{proof}

Denote the Nehari set, radial map and reduced energy by $\mathcal N_\mu^\nabla$, $m_\mu^\nabla$ and $\Psi_\mu^\nabla$, respectively. Direct differentiation of \eqref{gradient-energy} gives
\begin{equation}\label{gradient-defect}
\begin{aligned}
\mathcal I_\mu(u)-\tfrac14\mathcal I_\mu'(u)u
&=\tfrac14Q(u)+\int_\Omega k_\beta(u,\nabla u)\,dx,\\
k_\beta(s,\xi)&=\frac{\beta|\xi|^2(\frac12+s^2+|\xi|^2)}{(1+s^2+|\xi|^2)^2},
\qquad 0\leq k_\beta\leq\beta.
\end{aligned}
\end{equation}
The finite threshold is
\begin{equation}\label{gradient-threshold}
G_\beta=\beta\int_\Omega\frac{|\nabla e_1|^2}{e_1^2+|\nabla e_1|^2}\,dx,
\qquad 0<G_\beta<\beta|\Omega|.
\end{equation}
The strict inequalities follow from $e_1>0$ almost everywhere and $\|e_1\|=1$.

\begin{proposition}\label{gradient-boundary-ps}
The lower semicontinuous map
\begin{equation}\label{gradient-gamma}
\Gamma_\nabla(w)=
\begin{cases}G_\beta,&w=\pm e_1,\\+\infty,&w\in\partial\mathcal S_b\setminus\{\pm e_1\},\end{cases}
\end{equation}
satisfies the boundary inequality in $(H)$, and its finite value is attained as a boundary limit of $\Psi_\mu^\nabla$. Moreover, $\mathcal I_\mu$ satisfies $(PS)^{\mathcal N_\mu^\nabla}_c$ for every $c<G_\beta$.
\end{proposition}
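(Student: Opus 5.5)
The plan follows the proof of Proposition~\ref{cubic-boundary} and Proposition~\ref{cubic-ps}. The reduced energy is controlled through the identity \eqref{gradient-defect}: on $\mathcal N_\mu^\nabla$,
\[
\Psi_\mu^\nabla(w)=\tfrac14 s_w^2Q(w)+\int_\Omega k_\beta(s_ww,s_w\nabla w)\,dx .
\]
The key pointwise fact concerns the behaviour along rays. For fixed $(s,\xi)$ with $s^2+|\xi|^2>0$ and $t\to\infty$,
\[
k_\beta(ts,t\xi)\longrightarrow \frac{\beta|\xi|^2}{s^2+|\xi|^2}.
\]
Along $w=\pm e_1$ this limit is the integrand of $G_\beta$.

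\textbf{Boundary inequality and lower semicontinuity.} First, $\Gamma_\nabla$ is lower semicontinuous: its finite-value set $\{\pm e_1\}$ is closed and the finite value is constant. Next take $w_n\in\mathcal S_b$ with $w_n\to w\in\partial\mathcal S_b$. Then $B(w)=0$. Exactly as in Proposition~\ref{cubic-boundary}, $s_{w_n}\to\infty$: a bounded subsequence would produce a zero of \eqref{gradient-fiber} with $B(w)=0$, which is impossible because that expression is positive there.

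If $w\neq\pm e_1$, then $Q(w)>0$ by \eqref{cubic-kernel}. Hence $\Psi_\mu^\nabla(w_n)\ge \frac14 s_{w_n}^2Q(w_n)\to\infty$.

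If $w=\pm e_1$, pass to a subsequence realizing the liminf. Extract a.e. convergence of $w_n$ and of $\nabla w_n$, which strong $H^1$ convergence allows. Since $e_1>0$, the limit above gives pointwise convergence of $k_\beta(s_nw_n,s_n\nabla w_n)$ to $\beta|\nabla e_1|^2/(e_1^2+|\nabla e_1|^2)$ a.e. Fatou's lemma, using $k_\beta\ge0$, then yields $\liminf\Psi_\mu^\nabla(w_n)\ge G_\beta$.

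\textbf{Attainment.} Reuse $w_\varepsilon=(e_1+\varepsilon v)/\|e_1+\varepsilon v\|$ with $B'(e_1)v>0$. By \eqref{cubic-boundary-perturbation}, $B(w_\varepsilon)\sim d\varepsilon$ and $Q(w_\varepsilon)=O(\varepsilon^2)$. Then \eqref{gradient-scale-bound} gives $s_{w_\varepsilon}^2\le C/\varepsilon$, so $s^2Q\to0$. We also need $s_{w_\varepsilon}\to\infty$, which follows from the boundary argument above. Finally, dominated convergence, using $0\le k_\beta\le\beta$ and a.e. convergence along arbitrary sequences $\varepsilon\downarrow0$, gives $\int k_\beta\to G_\beta$.

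\textbf{Compactness below $G_\beta$.} Let $u_n\in\mathcal N_\mu^\nabla$ be an ambient PS sequence at level $c<G_\beta$. By \eqref{gradient-defect}, $Q(u_n)$ is bounded. Write $u_n=a_ne_1+z_n$. The spectral gap bounds $z_n$. If $|a_n|\to\infty$, then $u_n/a_n\to e_1$ in $X$, so $u_n/\|u_n\|\to\pm e_1$ strongly. Applying the boundary inequality just proved to $w_n=u_n/\|u_n\|$ gives $c\ge G_\beta$, a contradiction. Hence $u_n$ is bounded, and we may take $u_n\rightharpoonup u$ with strong convergence in $L^2\cap L^4$.

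Testing $\mathcal I_\mu'(u_n)-\mathcal I_\mu'(u)$ against $u_n-u$ is the step I expect to be the main obstacle, because of the gradient dependence. The plan is to use the monotonicity \eqref{gradient-monotonicity} with $s=u_n$ frozen:
\[
(1-\beta/2)\|u_n-u\|^2\le\int\big(A_\beta(u_n,\nabla u_n)-A_\beta(u_n,\nabla u)\big)\cdot\nabla(u_n-u)\,dx .
\]
The right-hand side equals $o(1)$ minus $\int A_\beta(u_n,\nabla u)\cdot\nabla(u_n-u)$ plus the lower-order terms, with the following justifications:
\begin{itemize}[leftmargin=*]
\item $A_\beta(u_n,\nabla u)\to A_\beta(u,\nabla u)$ strongly in $L^2$, by dominated convergence (it is bounded by $|\nabla u|+\beta$ and continuous in $s$), so its pairing with $\nabla(u_n-u)\rightharpoonup0$ vanishes.
\item $(h_\beta)_s$ is bounded, so $\int(h_\beta)_s(u_n,\nabla u_n)(u_n-u)\to0$.
\item The terms $\lambda_1u_n$ and $\mu bu_n^3$ converge strongly in the dual pairing.
\end{itemize}
This gives $u_n\to u$ in $X$.
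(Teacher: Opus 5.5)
Your proposal is correct and follows essentially the same argument as the paper. You use $s_{w_n}\to\infty$ plus Fatou on \eqref{gradient-defect} for the boundary inequality, the perturbations $w_\varepsilon$ with \eqref{gradient-scale-bound} and dominated convergence for sharpness, the spectral-gap decomposition for boundedness, and the frozen-$s$ monotonicity \eqref{gradient-monotonicity} with $A_\beta(u_n,\nabla u)\to A_\beta(u,\nabla u)$ in $L^2$ for strong convergence. The only cosmetic difference is that you get boundedness by applying the just-proved boundary inequality to $u_n/\|u_n\|\to\pm e_1$ (as the paper does in Proposition~\ref{Prop 2.3}), rather than rerunning Fatou directly on $u_n$ in \eqref{gradient-defect}.
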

\begin{proof}
Let $w_n\to w\in\partial\mathcal S_b$ strongly in $X$, with $w_n\in\mathcal S_b$. Since $B(w)=0$, $(A_3)$ and the strict positivity in \eqref{gradient-fiber} outside the cone imply $s_n=s_{w_n}\to\infty$, exactly as in Proposition~\ref{cubic-boundary}. If $w\neq\pm e_1$, \eqref{gradient-defect} gives $\Psi_\mu^\nabla(w_n)\geq s_n^2Q(w_n)/4\to\infty$. If $w=\pm e_1$, extract almost everywhere convergence of $w_n$ and $\nabla w_n$ along a subsequence realizing the energy liminf. Since $e_1>0$ almost everywhere,
\[
k_\beta(s_nw_n,s_n\nabla w_n)
\longrightarrow\beta\frac{|\nabla e_1|^2}{e_1^2+|\nabla e_1|^2}
\quad\text{almost everywhere}.
\]
Fatou's lemma proves the boundary inequality. Lower semicontinuity follows from the closed finite-value set $\{\pm e_1\}$.

For sharpness, use the perturbations $w_\varepsilon$ in \eqref{cubic-boundary-perturbation}. Then $B(w_\varepsilon)\sim d\varepsilon$ and $Q(w_\varepsilon)=O(\varepsilon^2)$. Equation~\eqref{gradient-scale-bound} gives $s_{w_\varepsilon}^2=O(\varepsilon^{-1})$, so $Q(m_\mu^\nabla(w_\varepsilon))\to0$. The preceding pointwise limit and $k_\beta\leq\beta$ give $\Psi_\mu^\nabla(w_\varepsilon)\to G_\beta$ by dominated convergence.

Now let $u_n\in\mathcal N_\mu^\nabla$, $\mathcal I_\mu(u_n)\to c<G_\beta$ and $\mathcal I_\mu'(u_n)\to0$ in $X'$. By \eqref{gradient-defect}, $Q(u_n)$ is bounded. Write $u_n=a_ne_1+z_n$, with $\int_\Omega z_ne_1dx=0$. The spectral estimate $Q(z_n)\geq(1-\lambda_1/\lambda_2)\|z_n\|^2$ bounds $z_n$. If $u_n$ were unbounded, a subsequence would satisfy $|a_n|\to\infty$ and $u_n/a_n\to e_1$ strongly in $X$. Almost everywhere extraction for functions and gradients, followed by Fatou's lemma in \eqref{gradient-defect}, would give $c\geq G_\beta$, a contradiction.

Thus, after extraction, $u_n\rightharpoonup u$ in $X$ and $u_n\to u$ in $L^2\cap L^4$ and almost everywhere. Testing the derivative against $u_n-u$, the boundedness of $(h_\beta)_s$ and the strong $L^2,L^4$ convergence give
\[
\int_\Omega A_\beta(u_n,\nabla u_n)\cdot\nabla(u_n-u)\,dx\longrightarrow0.
\]
Moreover, the bounded correction $(h_\beta)_\xi$ in \eqref{gradient-derivatives} yields $A_\beta(u_n,\nabla u)\to A_\beta(u,\nabla u)$ in $L^2$ by dominated convergence. Its pairing with $\nabla(u_n-u)$ therefore tends to zero by weak convergence. Subtracting and using \eqref{gradient-monotonicity} gives
\[
(1-\beta/2)\|u_n-u\|^2
\leq\int_\Omega\big[A_\beta(u_n,\nabla u_n)-A_\beta(u_n,\nabla u)\big]
\cdot\nabla(u_n-u)\,dx\longrightarrow0.
\]
This proves the required strong compactness.
\end{proof}

For $w_0\in\mathcal A_b$, set $A_\beta^\nabla(w_0)=Q(w_0)/2+\beta\|w_0\|^2$ and
$\mu_*^\nabla(w_0)=A_\beta^\nabla(w_0)^2/(G_\beta B(w_0))$.
Since $h_\beta(tw_0,t\nabla w_0)\leq\beta t^2|\nabla w_0|^2$, the radial minimax characterization gives
\[
c_\mu^\nabla\leq\max_{t>0}\mathcal I_\mu(tw_0)
\leq\max_{t>0}\left[A_\beta^\nabla(w_0)t^2-\tfrac\mu4B(w_0)t^4\right]
=\frac{A_\beta^\nabla(w_0)^2}{\mu B(w_0)}.
\]
We consequently obtain the following application of the finite-barrier alternative.
\begin{theorem}\label{gradient-ground-state}
Assume \eqref{cubic-balance} and $0<\beta<2$. For any $w_0\in\mathcal A_b$ and $\mu>\mu_*^\nabla(w_0)$, problem~\eqref{gradient-problem} has a ground-state solution with
\[
0<\mathcal I_\mu(u_\mu)=c_\mu^\nabla
=\inf_{w\in\mathcal A_b}\max_{t>0}\mathcal I_\mu(tw)
<G_\beta=\inf_{\partial\mathcal S_b}\Gamma_\nabla.
\]
\end{theorem}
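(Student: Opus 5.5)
The plan is to apply Theorem \ref{level} with $X=H_0^1(\Omega)$, the functional $\mathcal I_\mu$, the cone $\mathcal A_b$ and the barrier $\Gamma_\nabla$ of \eqref{gradient-gamma}. This mirrors the proof of Theorem \ref{cubic-ground-state}.

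\textbf{Step 1 (structure and barrier).} Proposition \ref{gradient-fibers} provides $(A_1)$--$(A_3)$ for every $\mu>0$. Proposition \ref{gradient-boundary-ps} shows that $\Gamma_\nabla$ is lower semicontinuous and satisfies the boundary inequality in $(H)$. We also need $\inf_{\partial\mathcal S_b}\Gamma_\nabla=G_\beta$. Since $\pm e_1\in\partial\mathcal S_b$ by Proposition \ref{cubic-boundary}, the set where $\Gamma_\nabla$ is finite is nonempty, and its only finite value is $G_\beta$.

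\textbf{Step 2 (energy separation).} This is the main quantitative point. For $w_0\in\mathcal A_b$ we use the bound $h_\beta(s,\xi)\le\beta|\xi|^2$. It gives
\[
\mathcal I_\mu(tw_0)\le A^\nabla_\beta(w_0)t^2-\tfrac\mu4B(w_0)t^4 .
\]
The right-hand side is maximized at $t^2=2A^\nabla_\beta(w_0)/(\mu B(w_0))$, with maximum value $A^\nabla_\beta(w_0)^2/(\mu B(w_0))$. Proposition \ref{Prop 1.2}(d) identifies $c^\nabla_\mu$ with $\inf_{\mathcal S_b}\Psi^\nabla_\mu$, and $\Psi^\nabla_\mu(w_0/\|w_0\|)=\max_{t>0}\mathcal I_\mu(tw_0)$. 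Therefore
\[
c^\nabla_\mu\le \frac{A^\nabla_\beta(w_0)^2}{\mu B(w_0)}<G_\beta
\]
exactly when $\mu>\mu^\nabla_*(w_0)$. This gives the strict inequality $c_{\mathcal N}<\inf\Gamma$ required in $(H)$.

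\textbf{Step 3 (compactness and conclusion).} Since $c^\nabla_\mu<G_\beta$, the second part of Proposition \ref{gradient-boundary-ps} yields $(PS)^{\mathcal N^\nabla_\mu}_{c^\nabla_\mu}$. Theorem \ref{level} then gives a critical point $u_\mu$ with $\mathcal I_\mu(u_\mu)=c^\nabla_\mu>0$. It also gives the radial characterization $c^\nabla_\mu=\inf_{\mathcal A_b}\max_{t>0}\mathcal I_\mu(tw)$.

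Because $\mathcal I_\mu\in C^1$, critical points of $\mathcal I_\mu$ are weak solutions of \eqref{gradient-problem}. Every nonzero critical point lies in $\mathcal N^\nabla_\mu$, so its energy is at least $c^\nabla_\mu$, and hence $u_\mu$ is a ground state. The only delicate points are the precise maximization in Step 2 and confirming that the boundary infimum equals $G_\beta$ rather than $+\infty$. Both are settled by the earlier propositions.
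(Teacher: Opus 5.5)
Your proposal is correct and follows the paper's own proof. Like the paper, you verify the hypotheses of Theorem~\ref{level} through Propositions~\ref{gradient-fibers} and~\ref{gradient-boundary-ps}, get the strict separation $c_\mu^\nabla\le A_\beta^\nabla(w_0)^2/(\mu B(w_0))<G_\beta$ from $h_\beta(s,\xi)\le\beta|\xi|^2$ and the radial characterization, and use compactness below $G_\beta$; your remark that $\inf_{\partial\mathcal S_b}\Gamma_\nabla=G_\beta$ because $\pm e_1\in\partial\mathcal S_b$ (Proposition~\ref{cubic-boundary}) only makes explicit a point the paper leaves implicit.
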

\begin{proof}
Propositions~\ref{gradient-fibers} and \ref{gradient-boundary-ps}, together with the preceding estimate, verify the hypotheses of Theorem~\ref{level}. All nonzero critical points belong to $\mathcal N_\mu^\nabla$.
\end{proof}

The ball and the weight in Example~\ref{cubic-ball} also give an explicit admissible configuration here. On the interval $(0,L)$, the profile $e_1(x)=C\sin(\pi x/L)$ gives $G_\beta=\beta\pi L/(L+\pi)$, by substituting $y=\pi x/L$ and then $t=\tan y$ on $(0,\pi/2)$. In general \eqref{gradient-threshold} retains the relative sizes of $e_1$ and $\nabla e_1$ throughout the domain. The sharp finite boundary limit also prevents use of the boundary-divergence argument in \cite[Section~3.4, Lemma~26]{SW}, as in the preceding application.

The Theorem \ref{main} of next section complements Theorem 1.1 in \cite{PZ} and Corollary 4.2(1) in \cite{FQS}.

\subsection{A Kirchhoff problem of Perera-Zhang type}\label{kirchhoff-app}

By using the Yang index and critical groups, Perera and Zhang in \cite{PZ} proved the existence of nontrivial solutions for the following class of nonlocal quasilinear elliptic boundary value
problem
\begin{equation}                                \label{P'_9}\tag{$P_6$}
\left\{\begin{array}{rcl}
-(a+b\displaystyle\int_{\Omega}\vert \nabla u\vert^2 \ dx)\Delta u&=&f(x, u) ~~\mbox{in}~~\Omega,\\[0.3cm]
u&=& 0 ~~\mbox{on}~~\partial\Omega,
\end{array}
\right.
\end{equation}
where $\Omega$ is a smooth bounded domain $\mathbb{R}^N$, $N=1,\, 2$ or $3$, $a,b>0$ are constants and $f \in C(\Omega\times\R, \R)$ is a Carathéodory function, satisfying:
\begin{equation}            \label{f'_{18}}\tag{$PZ$}
\lim_{s\to 0}\dfrac{f(x, s)}{as}=\lambda \ \ \ \mbox{and} \ \ \ \lim_{|s|\to\infty}\dfrac{f(x, s)}{bs^3}=\mu,~~\mbox{uniformly in}~~x\in\Omega.
\end{equation}
To prove their result, the authors first consider the following nonlinear problem

\begin{equation}             \label{P'_{11}}\tag{$Q_{\mu}$}
\left\{\begin{array}{rcl}
-\|u\|^2\Delta u&=&\mu u^3~~\mbox{in}~~\Omega,\\[0.2cm]
u&=& 0~~\mbox{on}~~\partial\Omega.
\end{array}
\right.
\end{equation}
Denoting the sequence $\{\mu_n\}$ of minimax eigenvalues of \eqref{P'_{11}} by $0<\mu_1\leq\mu_2\leq\cdots$ and by $0 < \lambda_1 < \lambda_2 \leq \cdots$ the eigenvalues of $(-\Delta, H_0^1(\Omega))$, they were able to prove that, under the condition (\ref{f'_{18}}), if $\lambda\in(\lambda_l,\lambda_{l+1})$ and $\mu\in(\mu_m,\mu_{m+1})$ is not an eigenvalue of (\ref{P'_{11}}), with $l\neq m$, then problem \eqref{P'_9} has a nontrivial solution. 

More recently, Figueiredo et al. in \cite{FQS} studied the particular case of \eqref{P'_9}:
\begin{equation}                                \label{pl}\tag{$PL$}
\left\{\begin{array}{rcl}
-(a+b\displaystyle\int_{\Omega}\vert \nabla u\vert^2 \ dx)\Delta u&=&\lambda u+\mu |u|^2u, ~~\mbox{in}~~\Omega,\\[0.3cm]
u&=& 0, ~~\mbox{on}~~\partial\Omega,
\end{array}
\right.
\end{equation}
they showed the existence of a ground-state solutions for many ranges of the pair $(\lambda,\mu)$, in particular for the case $\lambda<\lambda_1 a$ and $\mu>b\mu_1$, where
$$
\mu_1=\inf\left\{\dfrac{\|u\|^4}{|u|_4^4}:~ u\in H_0^1(\Omega)\setminus\{0\}\right\}.
$$

In this section, we will study the existence of a solution to the Kirchhoff problem with asymptotically cubic nonlinearity
\begin{equation}\tag{$P_6$}\label{Prop P_4}
\left\{\begin{array}{rcl}
-(a+b\|u\|^2)\Delta u&=&f(u)~~\mbox{in}~~\Omega,\\[0.3cm]
u&=& 0~~\mbox{on}~~\partial\Omega,
\end{array}
\right.
\end{equation}
where $\Omega \subset \R^N$, $1\leq N\leq3$, is a bounded domain with smooth boundary, $a, b>0$ and $f \in C(\R)$ satisfies:
\begin{itemize}
    \item[$(f_1)$]
$\displaystyle\lim_{s\to 0}\dfrac{f(s)}{as}=\lambda \ \ \ \mbox{and} \ \ \ \lim_{|s|\to\infty}\dfrac{f(s)}{bs^3}=\mu$;

\item[$(f_2)$] The map $s\mapsto (f(s)-a\lambda s)/|s|^3$ is nondecreasing in $s\neq 0$;

\end{itemize}

As an immediate consequence of $(f_2)$ we have
\begin{equation}\label{a33}
   f(s)s\geq 4F(s)-a\lambda s^2~~\mbox{for all}~~ s\in\mathbb{R},
\end{equation}
where $F(s)=\displaystyle\int_0^sf(\tau)d\tau$. Quotients involving $f(sw)/(sw)^3$ below are integrated over $[w\neq0]$; their products with $w^4$ are interpreted as $f(sw)w/s^3$, equal to zero on $[w=0]$. Indeed, for $r(s)=f(s)-a\lambda s$, condition $(f_2)$ implies
\[
r(s)s\geq4\int_0^sr(t)dt.
\]
For $s>0$ this follows by integrating $r(t)/t^3\leq r(s)/s^3$ over $0<t<s$. For $s<0$, the substitution $t=-\rho$ gives the same inequality. Expanding $r$ yields \eqref{a33}.

\begin{example}
  The functions $f(s)=a\lambda s+b\mu s^3$ and $f(s)=a\lambda s+\frac{2b\mu}{\pi} s^3\mathrm{arctg}|s|$, for $s\in\mathbb{R}$, satisfy $(f_1)-(f_2)$.  
\end{example}

We start by defining the energy functional $I: H^{1}_{0}(\Omega) \rightarrow \R$, associated to the problem (\ref{Prop P_4}), as
$$
I(u)=\dfrac{a}{2}\,\|u\|^2+\dfrac{b}{4}\,\|u\|^4-\int_{\Omega}F(u)dx.
$$
It is well known that $I \in C^{1}(H^{1}_{0}(\Omega),\R)$ and 
$$
I'(u)v = (a+b\|u\|^2)\int_{\Omega}\nabla u\nabla vdx - \int_{\Omega}f(u)v,~~\forall v\in H_0^1(\Omega).
$$

The Nehari set associated with the functional $I$ is
$$
\mathcal{N}:=\left\{u\in H_0^1(\Omega)\setminus\{0\}:~ a\|u\|^2+b\|u\|^4=\int_\Omega f(u)udx\right\}.
$$
For our purposes we consider the sets
$$
\mathcal{A}_\mu:=\left\{u\in H_0^1(\Omega):~ \|u\|^4<\mu\int_\Omega u^4dx\right\}.
$$
and $\mathcal{S}_\mu:=\mathcal{S}\cap \mathcal{A}_\mu$ where $\mathcal{S}$ denotes the unit sphere of $H^{1}_{0}(\Omega)$, that is,
\[
\mathcal{S}_\mu = \left\{u\in \mathcal{S}:~ 1<\mu\int_\Omega u^4dx\right\}.
\]


\begin{lemma}                                         \label{Lema 5.1}
Suppose that $\mu>\mu_1$. Then, $\mathcal{A}_{\mu}$ is a nonempty open set and $s\mathcal{A}_{\mu}\subset \mathcal{A}_{\mu}$, for all $s > 0$.
\end{lemma}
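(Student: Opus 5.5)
We follow the pattern of Lemmas \ref{Lem 3.2}, \ref{Lem 4.1} and \ref{Lem 2.1}. The set $\mathcal{A}_\mu$ will be written as a sublevel set of a continuous homogeneous map. Define $h:H_0^1(\Omega)\to\R$ by
\[
h(u)=\|u\|^4-\mu\int_\Omega u^4\,dx .
\]
Since $1\leq N\leq 3$, the embedding $H_0^1(\Omega)\hookrightarrow L^4(\Omega)$ is continuous. Hence $u\mapsto\int_\Omega u^4dx$ is continuous, and so is $u\mapsto\|u\|^4$. Thus $h$ is continuous and $\mathcal{A}_\mu=h^{-1}((-\infty,0))$ is open. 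Note also that $0\notin\mathcal{A}_\mu$, because $h(0)=0$; so $\mathcal{A}_\mu\subset X\setminus\{0\}$, as required in $(A_2)$.

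The cone property comes from homogeneity. For $s>0$ we have $h(su)=s^4h(u)$, so $h(u)<0$ implies $h(su)<0$. Therefore $s\mathcal{A}_\mu\subset\mathcal{A}_\mu$ for all $s>0$.

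The only point needing the hypothesis $\mu>\mu_1$ is nonemptiness. By definition of $\mu_1$ as an infimum, and since $\mu>\mu_1$, there is $u_0\in H_0^1(\Omega)\setminus\{0\}$ with
\[
\frac{\|u_0\|^4}{|u_0|_4^4}<\mu .
\]
Since $u_0\neq0$ gives $|u_0|_4>0$, this is exactly $h(u_0)<0$, so $u_0\in\mathcal{A}_\mu$. We do not need the infimum to be attained, although it is attained by compactness of the embedding into $L^4$ for $N\leq3$. No step is a genuine obstacle; the only care needed is to use the strict inequality $\mu>\mu_1$ together with the infimum characterization to produce $u_0$.
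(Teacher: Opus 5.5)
Your proof is correct and matches the paper's argument essentially verbatim. The paper also writes $\mathcal{A}_\mu=h^{-1}((-\infty,0))$ for the continuous, positively four-homogeneous map $h(u)=\|u\|^4-\mu|u|_4^4$, and gets nonemptiness from the strict inequality $\mu>\mu_1=\inf_{u\neq0}\|u\|^4/|u|_4^4$; your added remarks (continuity via $H_0^1(\Omega)\hookrightarrow L^4(\Omega)$ for $N\le3$, and $0\notin\mathcal{A}_\mu$) only make explicit what the paper leaves implicit.
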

\begin{proof}
The continuous map $h(u)=\|u\|^4-\mu|u|_4^4$ is positively homogeneous of degree four. Hence $\mathcal A_\mu=h^{-1}(( -\infty,0))$ is an open cone. Since $\mu>\mu_1=\inf_{u\neq0}\|u\|^4/|u|_4^4$, there is $u\neq0$ with $h(u)<0$, proving nonemptiness.
\end{proof}

\begin{proposition}                   \label{Lema 5.2}
Suppose that $f$ satisfies $(f_1)-(f_2)$. If $\lambda\in [0, \lambda_1)$ and $\mu>\mu_1$, then $(A_1)-(A_3)$ hold.
\end{proposition}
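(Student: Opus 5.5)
For $(A_1)$ I take the same Hilbert-space data as in Proposition \ref{Prop 3.1}: $\varphi(t)=t$, $\psi(u)=\|u\|^2/2$ and $J(u)v=\int_\Omega\nabla u\cdot\nabla v\,dx$. For $w\neq0$ and $\alpha_w(t)=I(tw)$, the fibering derivative is
\[
\frac{\alpha_w'(t)}{t^3}=\frac{a\|w\|^2-a\lambda|w|_2^2}{t^2}+b\|w\|^4-\int_{[w\neq0]}\frac{f(tw)-a\lambda tw}{(tw)^3}w^4\,dx .
\]
The first term is nonnegative because $\lambda<\lambda_1$, and strictly decreasing in $t$ when $w\neq0$, since $\lambda<\lambda_1$ gives $a\|w\|^2-a\lambda|w|_2^2>0$. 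By $(f_2)$ the integral is nondecreasing in $t$, so the whole expression is strictly decreasing. As $t\to0^+$ it tends to $+\infty$, because the integral stays bounded: by $(f_1)$ the quotient $r(s)/s^3$, with $r(s)=f(s)-a\lambda s$, is nondecreasing and tends to $b\mu$ at infinity, so $(f_2)$ gives $r(s)/s^3\le b\mu$. As $t\to\infty$, dominated convergence (dominated by $b\mu w^4$, with the lower side controlled by monotonicity) gives the limit $b\|w\|^4-b\mu|w|_4^4$. This limit is negative exactly when $w\in\mathcal A_\mu$. So on $\mathcal A_\mu$ there is a unique zero $s_w$ with the signs required in $(A_2)$.

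Outside the cone I use $r(s)/s^3\le b\mu$ to get
\[
\frac{\alpha'_w(t)}{t^3}>b(\|w\|^4-\mu|w|_4^4)\ge0 ,
\]
so no positive critical point exists there. The inequality is strict because of the positive quadratic term.

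For the lower bound in $(A_3)$, I use $(f_1)$ to write $|f(s)-a\lambda s|\le\varepsilon|s|+C_\varepsilon|s|^3$. On the Nehari set this gives
\[
0=I'(u)u\ge a(1-\lambda/\lambda_1)\|u\|^2-\varepsilon C\|u\|^2-C'_\varepsilon\|u\|^4 .
\]
Choosing $\varepsilon$ small, the quadratic coefficient stays positive, which bounds Nehari norms, and hence $s_w$, away from zero. (Alternatively, $r(s)/s^3\le b\mu$ alone gives $a(1-\lambda/\lambda_1)\le b\mu s_w^2|w|_4^4$.)

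For the upper bound on a compact $\mathcal K\subset\mathcal S_\mu$, I argue by contradiction: take $w_n\in\mathcal K$ with $s_n\to\infty$ and $w_n\to w\in\mathcal K$ strongly. The Nehari identity divided by $s_n^2$ reads
\[
\frac{a(1-\lambda|w_n|_2^2)}{s_n^2}+b=\int_{[w_n\ne0]}\frac{r(s_nw_n)}{(s_nw_n)^3}w_n^4\,dx .
\]
The left side tends to $b$. For the right side I replace $w_n^4$ by $w^4$ with an $L^1$ error that vanishes, using the bound $b\mu$ on the quotient; the bound is two-sided because the quotient is nonnegative by $(f_2)$ with limit $0$ at the origin. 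Then dominated convergence on $[w\neq0]$, where $|s_nw_n|\to\infty$, gives the limit $b\mu|w|_4^4$. Hence $1=\mu|w|_4^4$, which contradicts $w\in\mathcal S_\mu$.

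I expect the main obstacle to be justifying the limit of the quotient integral. The quotient is only bounded, and $w_n$ is not fixed, so the sign and boundedness of $r(s)/s^3$ have to be extracted from $(f_1)$ and $(f_2)$, exactly as in Proposition \ref{Prop 2.1}.
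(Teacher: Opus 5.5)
Your verification of $(A_1)$, $(A_2)$ and the lower bound in $(A_3)$ follows the paper's proof. Your alternative lower bound $a(1-\lambda/\lambda_1)\le b\mu s_w^2|w|_4^4$, which uses only $r(s)/s^3\le b\mu$, is also correct. The gap is in the compact-set upper bound, at exactly the step you flagged.

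To replace $w_n^4$ by $w^4$, and then to apply dominated convergence on $[w\neq0]$, you need $H(t)=r(t)/t^3$ to be bounded. You justify $0\le H\le b\mu$ by saying the quotient is ``nonnegative by $(f_2)$ with limit $0$ at the origin''. This is false. $(f_1)$ gives $r(s)/s\to0$, not $r(s)/s^3\to0$; for $f(s)=a\lambda s+b\mu s^3$ one has $H\equiv b\mu$. And $(f_2)$ carries no sign information.

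For example, $f(s)=a\lambda s+b\mu s^3-s|s|$ satisfies $(f_1)$ and $(f_2)$ in the form the paper actually uses, namely monotonicity of $r(s)/|s|^3$ on each half-line. Yet $H(s)=b\mu-1/|s|$ is negative near $0$ and unbounded below. So the claim $\int_\Omega|H(s_nw_n)|\,|w_n^4-w^4|\,dx\to0$ is unjustified. The domination of $H(s_nw_n)w^4$ on $[w\neq0]$ is also unjustified, because $|s_nw_n(x)|\to\infty$ only pointwise, not uniformly in $x$. If $(f_2)$ is read as monotonicity on all of $\mathbb{R}\setminus\{0\}$, one does get $|H|\le b\mu$. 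But that comes from the limits $\pm b\mu$ of $r(s)/|s|^3$ at $\pm\infty$, not from your reasoning, and the sign claim is still false.

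The repair is to keep the quotient attached to $w_n^4$, as the paper does. Continuity and $(f_1)$ give $|f(t)-a\lambda t|\le C(|t|+|t|^3)$, hence
$$|H(s_nw_n)w_n^4|=|r(s_nw_n)w_n|/s_n^3\le C(s_n^{-2}w_n^2+w_n^4).$$
Take a subsequence with $w_n\to w$ almost everywhere and $|w_n|\le h\in L^4(\Omega)$. Then this bound is integrable uniformly in $n$. The integrand tends to $b\mu w^4$ on $[w\neq0]$ and to $0$ on $[w=0]$, so you obtain $1=\mu|w|_4^4$.

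Alternatively, reuse the sandwich from your own $(A_2)$ argument. For $s_n\ge1$ you have $r(w_n)w_n\le H(s_nw_n)w_n^4\le b\mu w_n^4$. Both bounds converge in $L^1(\Omega)$ by the same growth estimate, and the generalized dominated convergence theorem applies.

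A minor point: for $s<0$ the quotient $r(s)/s^3$ is nondecreasing in $|s|$, not in $s$. Your conclusions $H\le b\mu$ and the monotonicity of the fibering integral in $t$ are unaffected.
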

\begin{proof}
The Hilbert-space verification of $(A_1)$ is as in Proposition~\ref{Prop 3.1}. Put $H(t)=[f(t)-a\lambda t]/t^3$ for $t\neq0$ and $D(w)=\|w\|^2-\lambda|w|_2^2$. By $(f_2)$, $H(sw)$ is nondecreasing in $s$ for nonzero real $w$, $H\leq b\mu$, and $D(w)\geq(1-\lambda/\lambda_1)\|w\|^2>0$. Thus
\begin{equation}\label{a}
\frac{\alpha_w'(s)}{s^3}=\frac{aD(w)}{s^2}+b\|w\|^4-\int_{[w\neq0]}H(sw)w^4dx
\end{equation}
is strictly decreasing. The integral is read as $[f(sw)-a\lambda sw]w/s^3$, and the growth bound $|f(t)|\leq C(|t|+|t|^3)$ ensures integrability. Dominated convergence and $(f_1)$ give
\[
\lim_{s\to0}\frac{\alpha_w'(s)}s=aD(w)>0,\qquad
\lim_{s\to\infty}\frac{\alpha_w'(s)}{s^3}=b(\|w\|^4-\mu|w|_4^4).
\]
For $w\in\mathcal A_\mu$ the latter limit is negative, so \eqref{a} has exactly one zero with the signs in $(A_2)$. Outside the cone, $H\leq b\mu$ gives $\alpha_w'(s)/s^3\geq aD(w)/s^2>0$.

For $(A_3)$, the estimate $|f(t)-a\lambda t|\leq\varepsilon|t|+C_\varepsilon|t|^3$ gives
\[
I'(u)u\geq\left[a(1-\lambda/\lambda_1)-\varepsilon/\lambda_1\right]\|u\|^2-C_\varepsilon'\|u\|^4.
\]
Choosing $0<\varepsilon<a(\lambda_1-\lambda)$ bounds Nehari norms away from zero. If $w_n$ lies in a compact $\mathcal K\subset\mathcal S_\mu$ and $s_n=s_{w_n}\to\infty$, extract $w_n\to w\in\mathcal K$ strongly in $H_0^1$ and $L^4$. The Nehari identity gives
\begin{equation}\label{eq3}
\frac a{s_n^2}+b=\int_\Omega\frac{f(s_nw_n)w_n}{s_n^3}dx.
\end{equation}
The integrands tend almost everywhere to $b\mu w^4$ and are bounded by $C(s_n^{-2}|w_n|^2+|w_n|^4)$. A dominated subsequence, supplied by strong $L^4$ convergence, gives $1=\mu|w|_4^4$, contradicting $w\in\mathcal S_\mu$.
\end{proof}


\begin{lemma}                  \label{Prop 5.1}
Suppose that $f$ satisfies $(f_1)-(f_2)$, $\lambda\in [0, \lambda_1)$ and $\mu>\mu_1$. If $\left\{w_n\right\}\subset \mathcal{S}_\mu$ is such that
\begin{equation}\label{d}
  \mathrm{dist}(w_n,\partial \mathcal{S}_\mu)\to 0,~~\mbox{as}~~n\to\infty.  
\end{equation}
Then, after a subsequence, $w_n\rightharpoonup w$ in $H_0^1(\Omega)$ for some $w\neq0$ satisfying $\mu|w|_4^4=1$ and $\|w\|\leq1$. Moreover, $s_n:=s_{w_n}\to\infty$ and $\Psi(w_n)\to+\infty$ as $n\to\infty$.
\end{lemma}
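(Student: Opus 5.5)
I would follow the scheme of Lemma \ref{Prop 2.2}, adapted to the quartic Kirchhoff structure.

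\textbf{Step 1: the weak limit lies on the quartic cone boundary.} Since $\|w_n\|=1$, after a subsequence $w_n\rightharpoonup w$ in $H_0^1(\Omega)$ and $w_n\to w$ in $L^4(\Omega)$ and almost everywhere, because $N\le3$ gives compactness into $L^4$. Weak lower semicontinuity gives $\|w\|\le1$. By \eqref{d} there are $z_n\in\partial\mathcal S_\mu$ with $\|w_n-z_n\|\to0$, and $\mu|z_n|_4^4=1$. Writing $a^4-b^4=(a-b)(a^3+a^2b+ab^2+b^3)$, H\"older and the $L^4$ embedding give
\[
\bigl|\mu|w_n|_4^4-1\bigr|\le C\|w_n-z_n\|\to0 .
\]
Strong $L^4$ convergence then yields $\mu|w|_4^4=1$, so $w\neq0$.

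\textbf{Step 2: $s_n\to\infty$.} Suppose a subsequence of $s_n$ stays bounded. By the lower bound in $(A_3)$ we may take $s_n\to s_0>0$. Pass to the limit in the Nehari identity
\[
as_n^2+bs_n^4=\int_\Omega f(s_nw_n)s_nw_n\,dx .
\]
The growth bound $|f(t)|\le C(|t|+|t|^3)$ and dominated convergence in $L^2\cap L^4$ give
\[
as_0^2+bs_0^4=\int_\Omega f(s_0w)s_0w\,dx .
\]
Now write $f(t)t=a\lambda t^2+H(t)t^4$, where $H\le b\mu$ by $(f_2)$ and $(f_1)$. This gives
\[
as_0^2+bs_0^4\le a\lambda s_0^2|w|_2^2+b\mu s_0^4|w|_4^4=a\lambda s_0^2|w|_2^2+bs_0^4 .
\]
Hence $\|w\|^2\le1\le\lambda|w|_2^2\le(\lambda/\lambda_1)\|w\|^2$, which contradicts $w\neq0$ and $\lambda<\lambda_1$.

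\textbf{Step 3: the energy diverges.} This is the main obstacle. The naive bound $\Psi(w_n)\ge I(tw_n)$ fails here, because $\limsup|w_n|_4^4\ge1/\mu$ makes $I(tw_n)\to-\infty$ for large $t$. Instead I use the Nehari identity, which gives
\[
\Psi(w_n)=I(u_n)-\tfrac14I'(u_n)u_n=\tfrac a4s_n^2+\int_\Omega\Bigl[\tfrac14f(u_n)u_n-F(u_n)\Bigr]dx,\qquad u_n=s_nw_n .
\]
By \eqref{a33}, $\tfrac14f(s)s-F(s)\ge-\tfrac{a\lambda}4s^2$. Therefore
\[
\Psi(w_n)\ge\tfrac a4s_n^2\bigl(1-\lambda|w_n|_2^2\bigr)\ge\tfrac a4\Bigl(1-\tfrac\lambda{\lambda_1}\Bigr)s_n^2\to+\infty
\]
by Step 2. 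Taking subsequences realizing the limits shows the whole sequence diverges.
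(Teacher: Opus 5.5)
Your proposal is correct and follows the paper's proof essentially step for step. You use the same $L^4$ comparison with boundary points $z_n$ to get $\mu|w|_4^4=1$, the same contradiction from passing to the limit in the Nehari identity using $H\le b\mu$, $\|w\|\le 1$ and $\lambda<\lambda_1$, and the same lower bound $\Psi(w_n)\geq\frac{a}{4}\bigl(1-\frac{\lambda}{\lambda_1}\bigr)s_n^2$ obtained from $I-\frac14 I'$ together with \eqref{a33}. The differences are only cosmetic: you keep the Nehari identity undivided instead of dividing by $s_n^4$, and you make the subsequence bookkeeping for the full sequence explicit.
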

\begin{proof} The sequence $\left\{w_n\right\}$ is bounded in $H_0^1(\Omega)$, because  $(w_n)\subset \mathcal{S}_\mu$. Passing to a subsequence, there exists $w\in H_0^1(\Omega)$ such that $w_n\rightharpoonup w$ in $H_0^1(\Omega)$. It follows from \eqref{d} that there exists a sequence $\left\{z_n\right\}\subset\partial \mathcal{S}_\mu$ satisfying $\|w_n-z_n\|\to 0$ as $n\to\infty$.

\medskip

The fact that $z_n\in\partial \mathcal{S}_\mu$ implies
$$
1=\mu\int_\Omega z_n^4dx.
$$

Now, we can argue as in the proof of Lemma \ref{Prop 2.2} in order to show that
\begin{equation}\label{s8}
 \left|\mu\int_\Omega w_n^4dx-1\right|\leq C \|w_n-z_n\|,
\end{equation}
for some $C>0$. Passing to the limit as $n\to\infty$ in \eqref{s8}, using Sobolev compact embedding of $H_0^1(\Omega)$ into $L^4(\Omega)$ and the convergence $\|w_n-z_n\|\to 0$ we obtain
 $$
 \left|\mu\int_\Omega w^4dx-1\right|=0,
 $$
 Thus $\mu|w|_4^4=1$ and $w\neq0$, while weak lower semicontinuity gives only $\|w\|\leq1$. 

\medskip

Suppose by contradiction that there exists a bounded subsequence of $\left\{s_{n}\right\}$, still denoted by $\left\{s_{n}\right\}$. Passing to a subsequence if necessary, we can assume that $\left\{s_{n}\right\}$ converges to a real number $s_0$, which is positive by Proposition \ref{Lema 5.2}. Since $w_n\in S_\mu$, we obtain
$$
\dfrac{a}{s_n^2}+b=\int_{[w_n\neq 0]}\dfrac{f(s_nw_n)}{(s_nw_n)^3}w_n^4 dx.
$$
The growth bound $|f(t)|\leq C(|t|+|t|^3)$ and compact $L^4$ convergence justify convergence of $f(s_nw_n)w_n/s_n^3$ to $f(s_0w)w/s_0^3$. Letting $n\to\infty$, it follows that
\begin{equation}\label{b}
    \dfrac{a}{s_0^2}+b\leq\int_{[w\neq 0]}\dfrac{f(s_0w)}{(s_0w)^3}w^4 dx.
\end{equation}
By hypothesis $(f_2)$, inequality \eqref{b} implies
$$
\dfrac{a}{s_0^2}+b\leq\int_{[w\neq 0]}\dfrac{f(s_0w)-a\lambda s_0w+a\lambda s_0 w}{(s_0w)^3}w^4 dx\leq b\mu\int_\Omega w^4dx+\dfrac{a\lambda}{s_0^2}\int_\Omega w^2
$$
or equivalently
\begin{equation}\label{b1}
    b\left(1-\mu\int_{\Omega}w^4dx\right)\leq \dfrac{a}{s_0^2}\left(\lambda\int_{\Omega} w^2dx-1\right),
\end{equation}
but since $\mu|w|_4^4=1$, the left hand side of \eqref{b1} is equal to zero, while its right hand side is strictly less than zero because $\lambda<\lambda_1$ and $\|w\|\leq1$, leading us to a contradiction, therefore $s_n\to\infty$. Finally, note that
$$
\Psi(w_n)=I(s_n w_n)-\frac{1}{4}I'(s_n w_n)(s_nw_n)=\frac{as_n^2}{4}+\int_{\Omega}\left[\frac{1}{4}f(s_n w_n)s_n w_n-F(s_n w_n)\right] dx
$$
It is a consequence of \eqref{a33} that
$$
\Psi(w_n)\geq \frac{as_n^2}{4}\left(1-\lambda\int_{\Omega}w_n^2 dx\right)\geq \frac{as_n^2}{4}\left(1-\frac{\lambda}{\lambda_1}\right).
$$
Since $\lambda\in [0, \lambda_1)$, we conclude that $\Psi(w_n)\to+\infty$ as $n\to+\infty$.
\end{proof}

\begin{proposition}\label{boto}
Suppose that $f$ satisfies $(f_1)-(f_2)$. If $\lambda\in [0, \lambda_1)$ and $\mu>\mu_1$, then the functional $I$ verifies hypothesis $(H)$.
\end{proposition}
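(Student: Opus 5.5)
The plan is to verify $(H)$ with the infinite barrier $\Gamma\equiv+\infty$ on $\partial\mathcal S_\mu$, exactly as in Remarks \ref{H} and \ref{disso}. The constant function $+\infty$ is trivially lower semicontinuous. Since $\inf_{\partial\mathcal S_\mu}\Gamma=+\infty$ (also when the boundary is empty, by the convention $\inf\emptyset=+\infty$), the strict separation reduces to $c_{\mathcal N}<+\infty$. This holds because $\mathcal N\neq\emptyset$ by Lemma \ref{Lem 1.2}(c), which applies thanks to Proposition \ref{Lema 5.2} and Lemma \ref{Lema 5.1}. Hence $c_{\mathcal N}$ is a real number; it is finite from below by Remark \ref{bounded}.

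It remains to prove the boundary inequality. Let $\{u_n\}\subset\mathcal S_\mu$ and $u_n\to u$ strongly in $H_0^1(\Omega)$ with $u\in\partial\mathcal S_\mu$. I must show $\liminf I(m(u_n))=+\infty$. Since $u\in\partial\mathcal S_\mu\subset\mathcal S_\mu^c$ and $u_n\to u$, we have
\[
\mathrm{dist}(u_n,\partial\mathcal S_\mu)\leq\|u_n-u\|\to0.
\]
Lemma \ref{Prop 5.1} then applies to $\{u_n\}$ and gives $\Psi(u_n)=I(m(u_n))\to+\infty$.

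The only subtlety is that Lemma \ref{Prop 5.1} is stated \emph{after a subsequence}. To handle this, I first pass to a subsequence realizing $\liminf_n I(m(u_n))$. I then apply the lemma to that subsequence, which still satisfies the distance condition, and obtain divergence along a further subsequence. Consequently the liminf equals $+\infty\geq\Gamma(u)$.

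No step is a genuine obstacle. The only care needed is this subsequence bookkeeping and the remark that the divergence estimate $\Psi(w_n)\geq \tfrac{a s_n^2}{4}(1-\lambda/\lambda_1)$ holds along the full sequence once $s_n\to\infty$.
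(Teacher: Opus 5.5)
Your proposal is correct and follows the paper's proof. You take $\Gamma\equiv+\infty$ and get the boundary inequality from Lemma \ref{Prop 5.1}, since $u_n\to u\in\partial\mathcal S_\mu$ forces $\mathrm{dist}(u_n,\partial\mathcal S_\mu)\to0$; your extra checks (lower semicontinuity, $c_{\mathcal N}<+\infty$ because $\mathcal N\neq\emptyset$, and the subsequence bookkeeping) just make explicit what the paper leaves implicit.
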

\begin{proof}
By Lemma \ref{Prop 5.1}, it is enough to take $\Gamma\equiv +\infty$. This application also uses a proper spherical cone, here with an infinite boundary barrier.
\end{proof}

\begin{proposition}                     \label{psc}
Suppose that $f$ satisfies $(f_1)-(f_2)$. If $\lambda\in [0, \lambda_1)$ and $\mu>\mu_1$, then the functional $\Psi$ satisfies the Palais-Smale condition at the level $c$ on $\mathcal{S}_\mu$, for any $c\in [c_{\mathcal{N}}, +\infty)$.
\end{proposition}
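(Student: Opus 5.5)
The plan is to show that every Palais--Smale sequence $\{w_n\}\subset\mathcal S_\mu$ for $\Psi$ at a level $c\geq c_{\mathcal N}$ has a subsequence converging in $\mathcal S_\mu$. Put $u_n:=m(w_n)=s_nw_n\in\mathcal N$, with $s_n=s_{w_n}$. By Proposition \ref{Abstract}(c), $\{u_n\}$ is an ambient $(PS)_c$ sequence for $I$, and $I(u_n)=\Psi(w_n)\to c$.

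\emph{Step 1: boundedness of $\{u_n\}$.} On $\mathcal N$, \eqref{a33} gives
\[
I(u)=I(u)-\tfrac14 I'(u)u=\tfrac a4\|u\|^2+\int_\Omega\bigl[\tfrac14 f(u)u-F(u)\bigr]dx\geq\tfrac a4\bigl(1-\lambda/\lambda_1\bigr)\|u\|^2 .
\]
So $I$ is coercive on $\mathcal N$, and $\|u_n\|=s_n$ is bounded. Because of the $\|u\|^4$ Kirchhoff term, this is the convenient route, rather than working with general PS sequences.

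\emph{Step 2: strong convergence of $\{u_n\}$.} Extract $u_n\rightharpoonup u$ in $H_0^1$, with $u_n\to u$ in $L^2\cap L^4$ and $\|u_n\|\to A\geq0$. The growth $|f(t)|\leq C(|t|+|t|^3)$ gives $\int_\Omega f(u_n)(u_n-u)\,dx\to0$. Hence
\[
(a+b\|u_n\|^2)\int_\Omega\nabla u_n\cdot\nabla(u_n-u)\,dx\to0 .
\]
Since $a+b\|u_n\|^2\geq a>0$, the integral itself tends to zero. Combined with weak convergence, this yields $\|u_n\|\to\|u\|$ and therefore $u_n\to u$ strongly. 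Lemma \ref{Lem 1.2}(d) then gives $u\in\mathcal N$, so $u\neq0$ and $\|u\|\geq\delta$.

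\emph{Step 3: return to the sphere.} Since $\|u_n\|\to\|u\|>0$, we get $w_n=u_n/\|u_n\|\to u/\|u\|=m^{-1}(u)$. By Lemma \ref{Lem 1.2}(a), $u\in\mathcal A_\mu$, so the limit lies in $\mathcal S_\mu$ and not on $\partial\mathcal S_\mu$. This is consistent with Lemma \ref{Prop 5.1}: a sequence approaching the boundary would force $\Psi(w_n)\to+\infty$, which is impossible at a finite level $c$.

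\emph{Main obstacle.} The delicate point is Step 2, specifically the nonlocal coefficient. One should not pass to the limit in $\|u_n\|^2\int\nabla u_n\cdot\nabla u$ separately. Isolating $\int\nabla u_n\cdot\nabla(u_n-u)$ through the lower bound $a>0$ avoids having to identify the limit $A$ first. The only other place needing care is the justification of $f(u_n)\to f(u)$ in $L^{4/3}$, which follows from the cubic growth bound, continuity of $f$, and Vitali's theorem.
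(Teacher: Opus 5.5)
Your proof is correct. It differs from the paper's only in how boundedness of $\{u_n\}$ is obtained.

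\textbf{The paper's route.} The paper argues by contradiction. Assuming $\|u_n\|\to\infty$, it sets $v_n=u_n/\|u_n\|$ and shows the weak limit $v$ is nonzero. It then passes to the limit in the rescaled Palais--Smale equation, using Vitali's theorem, to get $\int_\Omega\nabla v\cdot\nabla w\,dx=\mu\int_\Omega v^3w\,dx$. This is upgraded to strong convergence with $\|v\|=1=\mu|v|_4^4$, so $v\in\partial\mathcal S_\mu$. Lemma~\ref{Prop 5.1} then forces $\Psi(v_n)\to+\infty$, contradicting $\Psi(v_n)\to c$.

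\textbf{Your route.} You use the identity $I(u)-\tfrac14I'(u)u=\tfrac a4\|u\|^2+\int_\Omega[\tfrac14f(u)u-F(u)]\,dx$ together with \eqref{a33}. This gives coercivity on $\mathcal N$ in one line. It is exactly the estimate the paper itself uses at the end of the proof of Lemma~\ref{Prop 5.1}, so the whole blow-up analysis becomes unnecessary. Your inequality $I(u)-\tfrac14I'(u)u\geq\tfrac a4(1-\lambda/\lambda_1)\|u\|^2$ in fact holds for every $u\in H_0^1(\Omega)$. It therefore bounds arbitrary $(PS)_c$ sequences of $I$, which yields the full $(PS)_c$ condition and, a fortiori, the $(PS)^{\mathcal N}_c$ condition required in Theorem~\ref{level}.

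\textbf{Comparison.} The paper's argument mirrors Proposition~\ref{Prop 2.3}, where coercivity on the Nehari set is unavailable and only the boundary barrier prevents blow-up. It is thus a uniform template across applications. In this Kirchhoff setting with $\Gamma\equiv+\infty$, however, your route is shorter and more elementary. The remaining steps coincide with the paper's: compactness of bounded sequences by dividing out $a+b\|u_n\|^2\geq a$, and the return to $\mathcal S_\mu$ via $\mathcal N\subset\mathcal A_\mu$ and $m^{-1}$.
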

\begin{proof} A PS sequence for $\Psi$ gives an ambient PS sequence in $\mathcal N$ by Proposition \ref{Abstract}. It therefore suffices first to prove $(PS)^{\mathcal N}_c$ for $I$; the convergence of the normalized vectors then follows from the continuous inverse $m^{-1}$. Let $\left\{u_n\right\}\subset \mathcal{N}$ be a $(PS)_c$ sequence for $I$. Let us first prove that $\{u_n\}$ is bounded in $H_0^1(\Omega)$. In fact, in the contrary, up to a subsequence still denoted by $\left\{u_n\right\}$, we have $\|u_n\|\to\infty$ as $n\to\infty$. Set
$$
v_n:=\dfrac{u_n}{\|u_n\|}=m^{-1}(u_n)\in \mathcal{S}_\mu.
$$
It follows that the sequence $\left\{v_n\right\}$ is bounded and 
\begin{equation}\label{jadeu}
\Psi(v_n)\to c \ \mbox{as $n\to\infty$}.
\end{equation}
Consequently, there exists $v\in H_0^1(\Omega)$ such that $v_n\rightharpoonup v$ in  $H_0^1(\Omega)$. We claim that $v\neq 0$. In fact, suppose by contradiction that $v=0$. Since $\left\{\Psi(v_n)\right\}$ is bounded, there exists $C>0$ such that
$$
C\geq\Psi(v_n)=I(s_nv_n)\geq I(sv_n)=\dfrac{as^2}{2}+\dfrac{bs^4}{4}-\int_\Omega F(sv_n)dx,
$$
for all $s>0$. Once $v_n\rightharpoonup 0$ in $H_0^1(\Omega)$, passing to the limit as $n\to\infty$, we get
$$
C\geq\dfrac{as^2}{2}+\dfrac{bs^4}{4}~~\mbox{for all}~~s>0,
$$
leaving us to a contradiction. Therefore $v\neq 0$. 

\medskip

Since $\left\{u_n\right\}\subset\mathcal{N}$ is a $(PS)_c$ sequence for the functional $I$, we have
$$
o_n(1)+(a+b\|u_n\|^2)\|u_n\|\int_{\Omega}\nabla v_n\nabla w dx=\|u_n\|^3\int_{\Omega}\dfrac{f(\|u_n\|v_n)}{(\|u_n\|v_n)^3}v_n^3wdx,
$$
for all $w\in H_0^1(\Omega)$. Dividing the above expression by $\|u_n\|^3$, we get
$$
o_n(1)+\left(\dfrac{a}{\|u_n\|^2}+b\right)\int_{\Omega}\nabla v_n\nabla w dx=\int_{\Omega}\dfrac{f(\|u_n\|v_n)}{(\|u_n\|v_n)^3}v_n^3wdx,
$$
Set $t_n=\|u_n\|$. The nonlinear source in the last equation is $f(t_nv_n)/t_n^3$, with no singular quotient at zeros. The growth bound gives
\[
\left|\frac{f(t_nv_n)}{t_n^3}\right|
\leq C\bigl(t_n^{-2}|v_n|+|v_n|^3\bigr).
\]
By compact convergence $v_n\to v$ in $L^4$, these sources converge almost everywhere to $b\mu v^3$, and their $4/3$ powers are uniformly integrable. Vitali's theorem gives convergence in $L^{4/3}$. The coefficient on the linear gradient term tends to $b$. Passing to the limit using this source convergence and the weak convergence of the gradients, we obtain
\begin{equation}\label{eq4}
\int_{\Omega}\nabla v\nabla w=\mu\int_{\Omega}v^3wdx.
\end{equation}

\medskip

Testing the normalized PS equation against $v_n$ and using the strong $L^4$ source convergence just established, together with \eqref{eq4} tested against $v$, gives  
$$
b\|v\|^2=b\mu\int_{\Omega} v^{4} dx=\lim_{n\to+\infty}\int_{\Omega}\left[\frac{f(\|u_n\|v_n)}{(\|u_n\|v_n)^3}\right]v_n^4 dx=b\lim_{n\to\infty}\|v_n\|^2.
$$
This shows that $v_n\to v$ in $H_{0}^{1}(\Omega)$ and $\|v\|=1$. Combining this information and \eqref{eq4}, we get
$$
1=\|v\|^{2}=\mu\int_{\Omega}v^4 dx.
$$
Thus $v\in \partial\mathcal{S}_{\mathcal{A}}$. Now, Lemma \ref{Prop 5.1} implies $\Psi(v_n)\to+\infty$, which is a clear contradiction with \eqref{jadeu}. Therefore, $\left\{u_n\right\}$ is a bounded sequence.

\medskip

For the bounded sequence, extract $u_n\rightharpoonup u$ in $H_0^1$ and $u_n\to u$ in $L^4$. The cubic growth bound gives $f(u_n)\to f(u)$ in $L^{4/3}$. Since $a+b\|u_n\|^2$ is bounded and at least $a>0$, testing the derivative against $u_n-u$ yields
\[
a\|u_n-u\|^2\leq(a+b\|u_n\|^2)\|u_n-u\|^2
=I'(u_n)(u_n-u)+\int_\Omega f(u_n)(u_n-u)dx+o(1)\longrightarrow0.
\]
Here weak convergence gives the $o(1)$ pairing with $u$. This proves $(PS)^{\mathcal N}_c$ for $I$ and, by the radial correspondence stated at the beginning, $(PS)_c$ for $\Psi$.
\end{proof} 

Our main result in this subsection is as follows:

\begin{theorem}                                                   \label{main}
Suppose that $f$ satisfies $(f_1)-(f_2)$. If $\lambda\in [0, \lambda_1)$ and $\mu>\mu_1$, then the problem \eqref{Prop P_4} has a least energy nontrivial solution. 
\end{theorem}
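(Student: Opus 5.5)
The plan is to apply Theorem \ref{level} with $X=H_0^1(\Omega)$, the cone $\mathcal{A}_\mu$ and the spherical cone $\mathcal{S}_\mu$. Lemma \ref{Lema 5.1} shows that $\mathcal{A}_\mu$ is a nonempty open cone in $X\setminus\{0\}$ when $\mu>\mu_1$. Proposition \ref{Lema 5.2} gives $(A_1)$--$(A_3)$ for $\lambda\in[0,\lambda_1)$. Proposition \ref{boto} supplies $(H)$ with $\Gamma\equiv+\infty$. Since $c_{\mathcal N}$ is finite by Remark \ref{bounded}, the strict separation $c_{\mathcal N}<\inf_{\partial\mathcal S_\mu}\Gamma=+\infty$ holds automatically, and $\Gamma$ is trivially lower semicontinuous.

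The only remaining hypothesis is the compactness condition $(PS)^{\mathcal N}_{c_{\mathcal N}}$ for $I$. I will take it from the proof of Proposition \ref{psc}. That proof shows, as its intermediate step, that every ambient $(PS)_c$ sequence $\{u_n\}\subset\mathcal N$ with $c\geq c_{\mathcal N}$ has a strongly convergent subsequence. Boundedness is obtained by contradiction: if $\|u_n\|\to\infty$, the normalized vectors $v_n$ would converge to a point of $\partial\mathcal S_\mu$, and Lemma \ref{Prop 5.1} would then force $\Psi(v_n)\to+\infty$. Strong convergence follows from the $L^{4/3}$ convergence of $f(u_n)$ together with the lower bound $a+b\|u_n\|^2\geq a$. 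Taking $c=c_{\mathcal N}$ gives exactly the required property.

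Theorem \ref{level} then yields $u\neq0$ with $I'(u)=0$ and $I(u)=c_{\mathcal N}>0$, characterized by
\[
c_{\mathcal N}=\inf_{w\in\mathcal A_\mu}\max_{s>0}I(sw).
\]
To see that $u$ has least energy among nontrivial solutions, let $v$ be any nonzero critical point. Then $I'(v)v=0$, so $v\in\mathcal N$ and $I(v)\geq c_{\mathcal N}=I(u)$. Finally, critical points of $I$ are weak solutions of \eqref{Prop P_4}, by the formula for $I'$.

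The proof is mostly assembly of earlier results. The one delicate point is to check that Proposition \ref{psc}, which is stated for $\Psi$, really contains the ambient statement $(PS)^{\mathcal N}_{c_{\mathcal N}}$ for $I$ as used in Theorem \ref{level}. Its proof establishes this directly for sequences in $\mathcal N$ before transferring to $\Psi$, so no new estimate is needed.
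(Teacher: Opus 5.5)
Your proposal is correct and follows the paper's proof. The paper likewise just combines Lemma~\ref{Lema 5.1}, Propositions~\ref{Lema 5.2}, \ref{boto} and \ref{psc} with Theorem~\ref{level}, and you rightly note that the proof of Proposition~\ref{psc} first establishes the ambient $(PS)^{\mathcal N}_c$ condition for $I$, which is what Theorem~\ref{level} requires. (As a side remark, boundedness of such sequences does not need the contradiction argument, since \eqref{a33} gives directly $I(u)=I(u)-\frac14 I'(u)u\geq\frac a4(1-\lambda/\lambda_1)\|u\|^2$ for $u\in\mathcal N$.)
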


\begin{proof}  
It is a consequence of Lemma \ref{Lema 5.1}, Propositions \ref{Lema 5.2}, \ref{boto}, \ref{psc} and Theorem \ref{level}.                 
\end{proof}


\begin{thebibliography}{99}



 \bibitem{AB} W. Allegretto and A. Barabanova, 
 Positivity of solutions of elliptic equations with nonlocal terms,   
 {\it Proceedings of the Royal Society of Edinburgh A}, 
 {\bf 126}, n.3, (1996), 643--663.

  \bibitem{ACS} C.O. Alves, F.J.S.A. Correa and J.R. Santos J\'unior, 
 Remark on a class of integro-differential problems,   
 {\it J. Math. Anal. Appl.}, 
 {\bf 506}, (2022), 125723.

\bibitem{ASHR} 
W. Allegretto, B. Shen, P. Haswell and A.M. Robinson, 
Numerical modelling and optimization of micromachined thermal conductivity pressure sensor, 
{\it IEEE Transactions on Computer-Aided Design of Integrated Circuits and Systems},
{\bf 13}, n.10, (1994), 1247--1256.

\bibitem{BPS} M.A.L. Bahia, M.T.O. Pimenta and J.R. Santos J\'unior, Existence, sign and asymptotic behaviour for a class of integro-differential elliptic type problems, Calc. Var. PDE {\bf 63}, (2024), 122.


\bibitem{BC} V. Benci and G. Cerami, Multiple positive solutions of some elliptic problems via the Morse theory and the domain topology, Calc. Var. PDE {\bf 2}, (1994), 29-48.

  \bibitem{BZ} K.J. Brown and Y. Zhang, The Nehari manifold for a semilinear elliptic equation with a sign-changing weight function, {\it J. Differential Equations}, {\bf 193}(2), (2003), 481--499. DOI: \href{https://doi.org/10.1016/S0022-0396(03)00121-9}{10.1016/S0022-0396(03)00121-9}.

  \bibitem{Cat}
  E. A. Catchpole,
  A Cauchy problem for an ordinary integro-differential equation,
  {\it Proceedings of the Royal Society of Edinburgh A},
  {\bf 72}, (1972/73) 39--55.


  \bibitem{Cha}
N. Chafee,
The electric ballast resistor: homogeneous and nonhomogeneous equilibria,
  {\it Nonlinear Differential Equations},
 pp. 97--127,
 New York: Academic Press, 1981.
 
  \bibitem{CLL} F.J.S.A. Correa, N.A. Lima and R.N. de Lima, 
 Existence of solutions of integro-differential semilinear elliptic equations,   
 {\it Applicable Analysis}, 
 (2021), https://doi.org/10.1080/00036811.2021.2005786.

   \bibitem{dLd}
  G.C.G. dos Santos, N.A. Lima and R.N. de Lima,
  Existence and multiple of solutions for a class integro-differential equations with singular term via variational and Galerkin methods,
  {\it Nonlinear Analysis: Real World Applications},
  {\bf 69}, (2023), 103752.

\bibitem{Eke} I. Ekeland, On the variational principle. {\it J. Math. Anal. Appl.}, {\bf 47}, (1974), 324--353.

\bibitem{FP}
B. Fiedler and P. Pol\'a\u{c}ik, 
Complicated dynamics of scalar reaction diffusion equations with a nonlocal term, 
{\it Proceedings of the Royal Society of Edinburgh Section A: Mathematics}
{\bf 115}, n.1-2, (1990), 167--192.



\bibitem{FQS} G.M. Figueiredo, H. Ramos Quoirin and K. Silva, Ground states of elliptic problems over cones, Calc. Var. PDE {\bf 60}, (2021), 189.

\bibitem{FS} G.M. Figueiredo and J.R. Santos Júnior, Multiplicity and concentration behaviour of positive solutions for a Schrodinger-Kirchhoff type problem via penalization method, ESAIM: COCV {\bf 20}, (2014), 389–415.

\bibitem{PF}
P. Freitas, 
A nonlocal Sturm-Liouville eigenvalue problem, 
{\it Proceedings of the Royal Society of Edinburgh Section A: Mathematics}
{\bf 124}, (1994), 169--188. 

\bibitem{LZ} G. Li and H.-S. Zhou, Multiple solutions to $p$-laplacian problems with asymptotic nonlinearity as $u^{p-1}$ at infinity, {\it J. London Math. Soc.}, (2){\bf65}(2002)123-138.


\bibitem{LWW} J. Liu, Y. Wang and Z. Wang, Solutions for quasilinear Schrödinger equations via the Nehari method, Comm. Partial Differ. Equ. {\bf 29(5-6)}, (2004), 879–901.

\bibitem{NPS} J.R.S. Nascimento, M.T.O. Pimenta and J.R. Santos Júnior, A Nehari approach for asymptotically linear elliptic problems, {\it Bull. Braz. Math. Soc. (N.S.)}, 56, Art. 32 (2025). DOI: \href{https://doi.org/10.1007/s00574-025-00457-4}{10.1007/s00574-025-00457-4}.

\bibitem{OSlog} P. Omari and E. Sovrano, Positive solutions of indefinite logistic growth models with flux-saturated diffusion, {\it Nonlinear Anal.}, {\bf 201}, (2020), 111949. DOI: \href{https://doi.org/10.1016/j.na.2020.111949}{10.1016/j.na.2020.111949}.

\bibitem{OSsuper} P. Omari and E. Sovrano, Positive solutions of superlinear indefinite prescribed mean curvature problems, {\it Commun. Contemp. Math.}, {\bf 23}(3), (2021), 2050017. DOI: \href{https://doi.org/10.1142/S0219199720500170}{10.1142/S0219199720500170}.

\bibitem{PZ} K. Perera and Z. Zhang, Nontrivial solutions of Kirchhoff-type problems via the Yang
index, Journal of Differential Equations {\bf 221(1)}, (2006), 246–255.

\bibitem{QZ} W. Qi and X. Zhang, Multiplicity of solutions for a nonhomogeneous quasilinear elliptic equation with concave-convex nonlinearities, {\it Adv. Nonlinear Anal.}, {\bf 13}(1), (2024), 20240047. DOI: \href{https://doi.org/10.1515/anona-2024-0047}{10.1515/anona-2024-0047}.

\bibitem{RQS} H. Ramos Quoirin and K. Silva, Local minimizers for a class of functionals over the Nehari set, {\it J. Math. Anal. Appl.}, {\bf 519}(2), (2023), 126851. DOI: \href{https://doi.org/10.1016/j.jmaa.2022.126851}{10.1016/j.jmaa.2022.126851}.

\bibitem{SW} A. Szulkin and T. Weth, The method of Nehari manifold, {\it Handbook of Nonconvex Analysis and Applications}. D.Y. Gao and D. Motreanu eds., International Press, Boston, (2010), 597-632.

\end{thebibliography}
\end{document}